\newtheorem{proposition}{Proposition}[section]
\newtheorem{theorem}[proposition]{Theorem}
\newtheorem{definition}[proposition]{Definition}
\newtheorem{corollary}[proposition]{Corollary}
\newtheorem{lemma}[proposition]{Lemma}
\newtheorem{remark}[proposition]{Remark}
\newtheorem{ej}{Example}
\definecolor{verde}{rgb}{0.0, 0.5, 0.0}
\definecolor{auburn}{rgb}{1, 0.5, 0.15}
\definecolor{bordo}{rgb}{.65, 0.05, 0.25}
\newcommand{\Iup}{\overline{I}}
\newcommand{\He}{\mathcal{H}}
\newcommand{\Se}{\mathcal{S}}
\newcommand{\Ee}{\mathcal{E}}
\begin{document}

\title{Superhedging Supermartingales}



\author{C. Bender}
\address[C. Bender]{Department of Mathematics\\
Saarland University\\
Campus E 2 4,
66123 Saarbr\"ucken
Germany}
\email{bender@math.uni-sb.de}
\author{S.E.Ferrando}
\address[S.E.Ferrando]{Department of Mathematics\\
Toronto Metropolitan University\\
350 Victoria Street}
\email{ferrando@torontomu.ca}

\author{K.Gajewski}
\address[K. Gajewski]{Department of Mathematics\\
Toronto Metropolitan University\\
350 Victoria Street}
\email{konrad.gajewski@ryerson.ca}

\author{A.L. Gonzalez}
\address[A.L. Gonzalez]{Department of Mathematics\\
Mar del Plata National University\\
Mar del Plata, Funes 3350, 7600 Argentina}
\email{algonzal@mdp.edu.ar}


\date{Received: date / Accepted: date}

\maketitle
\begin{abstract}
Supermartingales are here defined on a non-probabilistic setting  and can be interpreted solely in terms of superhedging operations.
The classical expectation operator is replaced
by a  pair of 
subadditive operators one of them providing a class of null sets and the other one acting as an outer integral.
These operators are motivated by a financial theory of no-arbitrage pricing. Such a setting extends
the classical stochastic framework by replacing the path space of the process by a trajectory set, while also  providing a financial/gambling interpretation based on the notion of superhedging.  The paper proves analogues of the following classical results:  Doob's supermartingale decomposition and Doob's pointwise convergence theorem for nonnegative supermartingales. The approach shows how linearity of the expectation operator can
be circumvented and how integrability properties in the proposed setting
lead to the special case of (hedging) martingales while no integrability
conditions are required for the general supermartingale case.


\end{abstract}


\section{Introduction}

The paper introduces a class of non-probabilistic supermartingales in a setting where a set of price scenarios (also called trajectories) is given along with the possibility to trade as price trajectories unfold over time.
Trajectories are sequences $S= (S_i)_{i \geq 0} \in \mathcal{S}$ with a common
origin $S_0= s_0$, where the set $\mathcal{S}$ substitutes the abstract sample space $\Omega$ of the probabilistic setting. Following ideas of the theory of {\it non-lattice} integration developed by Leinert  \cite{leinert} and K\"onig \cite{konig}, one can construct an outer integral operator, denoted by $\overline \sigma$, which corresponds to the superhedging price when trading takes place by means of some idealized class of linear combinations of buy-and-hold strategies, see \cite{ferrando}. The idealization facilitates to establish an analogue of Daniell's continuity from below condition  for this outer superhedging integral operator, which is a standing assumption for the main results of this paper. Considering an investor, who enters the market at any later time, we can naturally introduce a conditional version of this superhedging outer  integral operator, denoted by $\overline \sigma_j$, which gives rise to the notion of a superhedging supermartingale via the relation $\overline \sigma_j f_{j+1}\leq f_j$. More precisely, the latter relation is only required to hold outside a null set -- and it is an important subtlety of Leinert's integration theory, that the null sets are determined by a countably sub-additive operator, $\overline I$ say, which is closely related but, in general, different from the  outer integral $\overline \sigma$. 

Proofs of classical results (i.e. in an stochastic setting) that involve  supermartingales
rely, at one point or another, on properties of
conditional expectation operators as well as on some measure theory. In our non-probabilistic setting, also referred to as {\it trajectorial setting}, it turns out that the space of integrable functions (restricted on which the  superhedging outer integral operator acts linearly) is inconveniently small, see \cite{bender1} for details. This fact obstructs the naive strategy of emulating classical proofs by replacing the expectation operator with the non-classical superhedging integral, but suggests to work with the  superhedging outer integral instead.This is a sublinear operator with unrestricted domain
but its definition allows to bypass the need for the linearity of the expectation as well as the non availability of some classical limit theorems.

In this paper, we prove analogues of several classical results (see for example \cite{neveu} and \cite{follmer})
for supermartingales in the trajectorial setting. In particular, we derive a representation theorem for superhedging supermartingales. Our representation is of a similar type as the uniform Doob decomposition in discrete time (Theorem 7.5 in \cite{follmer}) or the optional decomposition in continuous time, see \cite{elkaroui, kramkov} in the classical setting or \cite{nutz2, nutz} for  non-dominated versions. As illustrated by an example, our  Doob decomposition can also be applied to trajectory sets which do not have any martingale measure and, thus, cannot be recovered   
by classical robust supermartingale decompositions.

 Combining the supermartingale representation theorem with a convergence result for martingale transforms in the trajectorial setting (derived in \cite{ferrando}), we can, moreover, prove an analogue 
of Doob's a.e. pointwise convergence theorem for non-negative supermartingales. 

As another application of the supermartingale representation theorem, we clarify the role of the two superhedging operators $\overline \sigma$ and $\overline I$. Theorem \ref{thm:sigma_is_correct} shows that the  superhedging outer integral $\overline \sigma$ indeed provides the `correct' superhedging price in the sense that for payoffs of finite maturity it coincides with the minimal superhedging cost within the class of linear combinations of buy-and-hold strategies up to the null sets induced by the $\overline I$ operator. 

Our work in the Leinert-K\"onig setting provides an independent meaning, purely financially motivated,
to the results listed above. An inspection of our proof techniques
shows also the need to rely on new and independent proof arguments.

\subsection{Relation to the Literature}
The paper could be loosely considered as being part of the literature on robust financial mathematics that weakens a-priori probabilistic modeling
hypothesis, or dispenses with them altogether. This literature ranges from
discrete-time model-free superhedging dualities (e.g., \cite{burzoni, burzoni2, burzoni3}) to extending stochastic calculus beyond its original settings (e.g., \cite{vovk2,perkowski, beiglbock}).

Our setting is, however, more closely related to the game theoretic approach to probability initiated by Shafer, Vovk and coauthors (see e.g. \cite{shafer, vovk1} and the references therein). On a technical level, a key difference between the Shafer/Vovk approach and our setting is that their conditional global upper expectation operator $\mathbb{E}_s$ satisfies the axiomatic properties of an outer expectation  in every situation (Proposition 8.3 in \cite{vovk1}), while our conditional outer integral operator $\overline \sigma_j$ may assign value $-\infty$ to any bounded function  on a null set, on which the conditional version of the continuity from below property fails. In view of Theorem \ref{thm:L} below such a failure of the conditional continuity from below property can have two origins: a) The trajectory set may run in a situation (or, node, as we call it), in which the stock price will move upwards for sure (or will move downwards for sure) leading to an obvious arbitrage opportunity; b) A sure arbitrage situation arises at some node by trading up to an unbounded investment horizon, when the trajectory set turns out to be trajectorially incomplete (in the sense of Section \ref{sec:(L)} below). To the best of our knowledge, the aforementioned types of arbitrage situations are not presently accommodated into the abstract game-theoretic setting (as presented in Chapter 7 of \cite{vovk1}), but they are detected as null sets by our $\bar I$-operator (as should be), see also Remark \ref{rem:ShaferVovk} below. Dealing with these additional null sets does not only lead to significant technical difficulties, but, in view of Theorem \ref{thm:K}, we are required to work with the two different families of conditional superhedging operators $\overline \sigma_j$ and $\overline I_j$ (as opposed to the single family of global conditional upper expectations $\mathbb{E}_s$ in the game-theoretic approach). In this sense, our paper builds a bridge between the game-theoretic approach of Shafer and Vovk and the theory of non-lattice integration developed by Leinert and K\"onig.

\subsection{Structure of the Paper}
The paper is organized as follows; Section \ref{definitions} provides the definitions of the basic superhedging operators, the crucial continuity from below property of the superhedging outer integral (see Definition \ref{lPropertyWithLimInf}) and the notion of conditionally integrable functions
(see Definition \ref{integrable}). Section \ref{towerProperty} defines supermartingales and  stopping times and provides some examples.  Section \ref{representationTheorem} proves our supermartingale representation theorem. Doob's pointwise convergence result for non-negative supermartingales is derived in Section \ref{sec:doobConvergence}. The relation of the two families of superhedging operators $\overline \sigma_j$ and $\overline I_j$ is discussed in Section \ref{sec:I_vs_sigma}. In particular, we show that these operators actually differ, if the conditional continuity from below property is only asked to hold almost everywhere, see Theorem \ref{thm:K} for the precise statement.
 Section
\ref{sec:(L)} is dedicated to establish the validity of the conditional continuity of below property at almost every node (see Theorem \ref{thm:L})  -- a key 
hypothesis in several of our results. Section \ref{sec:(L)} also provides sufficient condition for a related hypothesis used in Section \ref{sec:doobConvergence}. Additional examples are also introduced to illustrate our results. Technical developments and some proofs are collected in  Appendices \ref{Property L} and \ref{Paper 1}.
Appendix \ref{app:L} contains the proof of Theorem \ref{thm:L} and provides a more general result, namely Theorem \ref{proofOfL}, establishing continuity from below at a given node.

\section{Basic setting and Fundamental operators} \label{definitions}

\subsection{Trajectorial Setting} \label{trajectorialSetting}
\begin{definition}[{\bf Trajectory set}] {\cite[Definition 1]{ferrando}}
Given a real number $s_0$, a \emph{trajectory set}, denoted by $\mathcal{S}= \mathcal{S}(s_0)$, is a subset of
\[
\mathcal{S}_{\infty}(s_0) =\{S=(S_i)_{i\ge 0}: S_i\in \mathbb{R},~ S_0=s_0\}.
\]
We make fundamental use of the following \emph{conditional spaces}; for $S \in \Se$ and $j \geq 0$  set:
\begin{equation} \nonumber
\Se_{(S,j)}\equiv\{\tilde{S} \in \Se: \tilde{S}_i= S_i, ~~ 0 \le i \le j\},
\end{equation}
the notation $(S,j)$, henceforth referred as a {\it node},  will be used as a shorthand for $\Se_{(S,j)}$. 
\end{definition}

Notice $\Se_{(S,0)} = \Se$ and, if $\tilde{S}\in \Se_{(S,j)}$, then $\Se_{(\tilde{S},j)}=\Se_{(S,j)}$. Moreover for $j \le k$ it follows that $\Se_{(S,k)}\subset \Se_{(S,j)}$. On the other hand, for any fixed pair $j<k$, $\Se_{(S,j)}$ is the disjoint union of $\Se_{(\tilde{S},k)}$ with $\tilde{S}\in \Se_{(S,j)}$, since if $\hat{S}\notin\Se_{(\tilde{S},k)}$ then $\Se_{(\tilde{S},k)}$ and $\Se_{(\hat{S},k)}$ are disjoint.

{\it Local} properties are relative to a given node. The classification of distinct nodes is presented in the following definition:

\begin{definition}[{\bf Types of nodes}] \label{typesOfNodes} 
	Given a trajectory space $\Se$ and a node $(S,j)$:

	\begin{itemize}
		\item $(S,j)$ is called an \emph{up-down node} if
		\begin{equation} \label{upDownProperty}
			\sup_{\tilde{S} \in \Se_{(S, j)}}~~ (\tilde{S}_{j+1} - S_{j}) >    0\quad \mbox{and}\quad
			\inf_{\tilde{S} \in \Se_{(S, j)}} ~~(\tilde{S}_{j+1} - S_{j}) <    0.
		\end{equation}

		\item $(S,j)$ is called a \emph{flat node} if
		\begin{equation} \label{flat}
			\sup_{\tilde{S} \in \Se_{(S, j)}}~~ (\tilde{S}_{j+1} - S_{j}) =    0 =
			\inf_{\tilde{S} \in \Se_{(S, j)}} ~~(\tilde{S}_{j+1} - S_{j}).
		\end{equation}
	\end{itemize}
	\noindent $(S, j)$ is called an arbitrage-free node if (\ref{upDownProperty}) or (\ref{flat}) hold, otherwise it
	is called an \emph{arbitrage node}. An arbitrage node $(S, j)$ is said to be of \emph{type I}, if there exists $\hat{S}\in\Se_{(S, j)}$
	such that $\hat{S}_{j+1} = S_j$; otherwise it is said to be of \emph{type II}.
\end{definition}

In practice, the coordinates $S_i$ are multidimensional in order to allow for multiple sources of uncertainty. For simplicity we restrict to $S_i \in \mathbb{R}$, one can also extend the framework to allow for several coordinates $S_i^k$ (see \cite{degano2} and \cite{ferrando19}).

\vspace{.1in}
Besides the set $\mathcal{S}$, the other basic component are the \emph{portfolios} defined as follows.
\begin{definition}[{\bf Conditional portfolio set}] For any fixed $S\in \Se$ and $j\ge 0$, $\He_{(S,j)}$ will be the set of all sequences of functions $H = (H_i)_{i \geq j}$, where $H_i: \Se_{(S,j)} \rightarrow \mathbb{R}$ are non-anticipative in the sense: for all $\tilde{S},\hat{S}\in\Se_{(S,j)}$ such that $\tilde{S}_k = \hat{S}_k$ for $j \le k\le i$, then $H_i(\tilde{S}) = H_i(\hat{S})$ (i.e. $H_i(\tilde{S}) = H_i(\tilde{S}_0,\ldots, \tilde{S}_i)$). 
	
Again, we introduce the shorthand notation $\mathcal{H}= \He_{(S,0)}$.
\end{definition}

Observe that since there are not restrictions to the conditional portfolio sets (i.e. portfolios acting on $\mathcal{S}_{(S,j)}$), then $\mathcal{H}_{(\tilde{S},j)}=\mathcal{H}_{(S,j)}$ whenever $\tilde{S}_i = S_i$, $0 \leq i \leq j$. Alternatively, one can
start with the `global' portfolios in $\mathcal{H}$ and define the sets $\mathcal{H}_{(S,j)}$ by restrictions to
 $\mathcal{S}_{(S,j)}$. Therefore, $H \in \He_{(S,j)}$ may be referred to as a \emph{ conditional portfolio}.
 
 \begin{remark}
 	Note that we do not impose any measurability condition on the functions $H_i:\mathbb{R}^{i+1}\rightarrow \mathbb{R}$ in the representation $H_i(\tilde S)=H_i(\tilde S_0,\ldots \tilde S_i)$ of a portfolio position. The main reason is that we will  work with a subadditive outer integral operator instead of a linear integral operator. On the one hand, this can be viewed in analogy to the use of the outer expectation operator in probability and statistics (see, e.g., \cite{VdVW}), which does not require any measurability properties of the integrands. On the other hand, this is in line with the protocols used in the discrete time game-theoretic approach of Shafer and Vovk \cite{shafer}, where no measurability conditions are imposed on the functions announced by the skeptic.        
 \end{remark}

For a node $(S,j)$, $H\in\He_{(S,j)}$, $V\in \mathbb{R}$ and $n\ge j$ we define $\Pi_{j,n}^{V, H}: \mathcal{S}_{(S,j)} \rightarrow \mathbb{R}$, as:
\begin{equation}   \nonumber 
\Pi_{j,n}^{V, H}(\tilde{S}) \equiv V+\sum_{i=j}^{n-1}H_i(\tilde{S})~ \Delta_i \tilde{S},\quad
\mbox{where}\;\: \Delta_i \tilde{S} = \tilde{S}_{i+1}- \tilde{S}_i,~~i\ge j, ~~\tilde{S} \in \mathcal{S}_{(S,j)}.
\end{equation}

\vspace{.1in} Notice that $V$ is a constant on $\mathcal{S}_{(S,j)}$ and so its value could change with $S$ i.e. $V= V(S)$.  In the sequel, being $\mathcal{A}$ a set of real valued functions, $\mathcal{A}^+$ will denote the set of its non-negative elements.
\begin{definition} [{\bf  Elementary vector spaces}]
\vspace{.05in} For a fixed node $(S,j)$  set
\begin{equation} \nonumber 
\mathcal{E}_{(S,j)}= \{f = \Pi_{j, n_f}^{V, H}: H \in \He_{(S,j)},~~V \in \mathbb{R}~~~\mbox{and}~~~n_f \in \mathbb{N}\}.
\end{equation}

%

Observe that $\mathcal{E}_{(S,j)}$ is a real vector space. Its elements are called \emph{elementary functions}. Let also define
\begin{equation} \nonumber 
\mathcal{E}_j = \{f:\Se\rightarrow \mathbb{R}: f|_{\Se_{(S,j)}}\in\mathcal{E}_{(S,j)} \;\; \forall S\in\Se \}.
\end{equation}
\end{definition}

\subsection{Fundamental Operators and Almost Everywhere Notions}\label{a.e. section}
Let $Q$ denote the set of all functions from $\mathcal{S}$ to $[-\infty, \infty]$ and $P \subseteq Q$ denotes the set of non-negative functions. The following conventions are in effect: $0 ~\infty =0$, $\infty + (- \infty) = \infty$, $u - v \equiv u +(-v) ~\forall~u, v \in [-\infty, \infty]$, and $\inf \emptyset = \infty$
(unless indicated otherwise).

We say that $f\in Q$ has \emph{maturity} $n_f\in \mathbb{N}$, if $f(S)=f(\tilde S)$ for every $S\in \Se$ and $\tilde S\in \mathcal{S}_{(S,n_f)}$, i.e. if $f$ depends on $S$ only through the first $n_f+1$ coordinates $S_0,\ldots, S_{n_f}$. In this case, we sometimes write $f(S_0,\ldots, S_{n_f})$ in place of $f(S)$. If $f$ has maturity $n_f$ for some $n_f\in \mathbb{N}$, we will speak of a function $f$ with finite maturity.

We define next the operator \; $\overline{I}_j : P \rightarrow \mathcal{E}^+_j$, which is a conditional extension of the operator $\overline I$ defined in \cite{ferrando} and it is used to define null sets. 

\begin{definition}\label{Iup_definition}
For a given node $(S, j)$ and a general $f \in P$ define
\begin{equation} \nonumber
\overline{I}_j f (S)\equiv  \inf \left\{\sum_{m \geq 1} V^m: ~~f \leq  \sum_{m \geq 1} \liminf_{n\to \infty}~~\Pi_{j, n}^{V^m, H^m}\;\;\;\mbox{on}\;\; \Se_{(S,j)},\; \Pi_{j, n}^{V^m, H^m}\in \mathcal{E}_{(S,j)}^+  ~~\forall ~~n \geq j \right\}.
\end{equation}
We will use the notation $\overline{I}f \equiv \overline{I}_0f$. We also set, for a general $f \in Q$:
\begin{equation} \nonumber
||f||_j(S) \equiv \overline{I}_j|f|(S)~~\mbox{and}~~||f|| \equiv ||f||_0(S).
\end{equation}
\end{definition}
Notice that $\overline{I}_j f (S)= \overline{I}_j f (S_0, \ldots, S_j)$, i.e. $\overline{I}_j f (\cdot)$ is constant on $\mathcal{S}_{(S,j)}$.
Moreover $\sum_{m \geq 1} V^m \geq 0$ hence, $\overline{I}_j f\ge 0$, so $||0||_j=0$.
$||\cdot||_j(S)$ will be called a {\it conditional norm}.

Next we introduce the notions of \emph{conditional null set} and the \emph{conditional $a.e.$ property}.
\begin{definition}[{\bf Conditional a.e. notions}] \label{nullObjects} Given a node $(S,j)$, a function $g\in Q$ is a \emph{conditionally null function at} $(S,j)$  if:\[\|g\|_j(S)=0.\]
 A subset $E\subset\Se$ is a \emph{conditionally null set at $(S,j)$} if $\|\mathbf{1}_E\|_j(S)=0$. A property is said to hold conditionally a.e. at $(S,j)$
(or equivalently: the property holds ``a.e. on $\mathcal{S}_{(S,j)}$") if the subset of  $\Se_{(S,j)}$ where it does not hold
is a conditionally null set at $(S,j)$. In particular, the latter definition applies to $g=f$ a.e. on $\mathcal{S}_{(S,j)}$, which also will be noted with $g\doteq f$ when $j=0$.

Notice that when $j=0$, the previous notions do not depend on $S$ and we apply the abbreviation ``a.e.'' for ``a.e. at $(S,0)$''. Moreover, $E\subset\Se$ is called a \emph{null set} and $g$ is called a \emph{null function}, if $\|\mathbf{1}_E\|=0$ and $\|g\|=0$, respectively.
\end{definition}
The next results, from \cite{ferrando}, gives properties of null functions and null sets that are widely used.
\begin{proposition} \label{propertiesOfIBarra} \cite[Proposition 1]{ferrando}
$\Iup$ is isotone,  positive homogeneous, countable subadditive and $\overline{I}(1_{\mathcal{S}}) \leq 1$.
\end{proposition}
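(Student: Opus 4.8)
The plan is to verify each of the four asserted properties directly from the definition of $\overline{I} = \overline{I}_0$ as an infimum over countable families of nonnegative liminf-portfolios. Throughout I would use the fact that every constant $V \geq 0$ gives rise to the admissible elementary function $\Pi_{0,n}^{V,0} \equiv V$ (zero portfolio), so that bounding a target $f$ from above by a single constant already produces a covering in the sense of Definition \ref{Iup_definition}; this will be the workhorse for the last two claims.

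First, \emph{isotonicity}: if $f \leq g$ pointwise on $\Se$, then any admissible covering $\sum_{m\geq 1}\liminf_n \Pi_{0,n}^{V^m,H^m} \geq g \geq f$ is also an admissible covering of $f$, so the infimum defining $\overline{I}f$ is taken over a superset of the one defining $\overline{I}g$, whence $\overline{I}f \leq \overline{I}g$. Second, \emph{positive homogeneity}: for $\lambda > 0$, multiplying each $V^m$ by $\lambda$ and each portfolio $H^m$ by $\lambda$ sends an admissible covering of $f$ (with cost $\sum_m V^m$) to an admissible covering of $\lambda f$ (with cost $\lambda \sum_m V^m$), using that $\Pi_{0,n}^{\lambda V, \lambda H} = \lambda \Pi_{0,n}^{V,H}$ and that $\liminf$ commutes with multiplication by a positive scalar; this gives $\overline{I}(\lambda f) \leq \lambda \overline{I}f$, and applying the same bound to $\lambda^{-1}(\lambda f)$ yields equality. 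The case $\lambda = 0$ follows from the convention $0\cdot\infty = 0$ together with $\overline{I}(0) = 0$, which is immediate since the constant covering with all $V^m = 0$ is admissible.

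Third, \emph{countable subadditivity}: given functions $(f_k)_{k\geq 1}$ in $P$ and $\varepsilon > 0$, choose for each $k$ an admissible covering $\{\Pi_{0,n}^{V^{k,m},H^{k,m}}\}_{m\geq 1}$ of $f_k$ with total cost at most $\overline{I}f_k + \varepsilon 2^{-k}$. The doubly-indexed family $\{\Pi_{0,n}^{V^{k,m},H^{k,m}}\}_{k,m\geq 1}$, re-indexed as a single sequence, is then a nonnegative family of elementary functions whose sum of liminf's dominates $\sum_k f_k$ (here one uses that a sum of liminf's is at most the liminf of partial sums is delicate, so instead one simply bounds $\sum_k f_k \leq \sum_k \sum_m \liminf_n \Pi^{k,m} = \sum_{(k,m)} \liminf_n \Pi^{k,m}$ termwise, which is valid since all terms are nonnegative), with total cost $\sum_{k,m} V^{k,m} \leq \sum_k \overline{I}f_k + \varepsilon$. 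Letting $\varepsilon \to 0$ gives $\overline{I}(\sum_k f_k) \leq \sum_k \overline{I}f_k$. Fourth, the bound $\overline{I}(1_{\Se}) \leq 1$: take the single-term covering $V^1 = 1$, $H^1 = 0$, so that $\liminf_n \Pi_{0,n}^{1,0} = 1 \geq 1_{\Se}$, which is admissible with cost $1$.

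The step requiring the most care is countable subadditivity, specifically the manipulation of the countable reindexing together with the interchange of $\sum_k$ and $\liminf_n$: one must be careful that the inequality $\sum_k f_k \leq \sum_{(k,m)} \liminf_n \Pi^{k,m}_{0,n}$ is applied in the correct direction (only a pointwise lower bound on the covering is needed, never an upper bound), and that the conventions $\infty + (-\infty) = \infty$ and $0\cdot\infty = 0$ do not create issues — which they do not here, since all portfolios in play are nonnegative, so every quantity under consideration lies in $[0,\infty]$. Since this proposition is quoted from \cite[Proposition 1]{ferrando}, I would either reproduce this short argument or simply cite the reference.
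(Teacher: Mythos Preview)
Your proof is correct: each of the four properties follows directly from the definition of $\overline{I}$ as an infimum over nonnegative coverings, and your handling of the countable-subadditivity step (reindexing a double family of nonnegative terms, avoiding any interchange of $\liminf$ with the outer sum) is the right way to do it. The paper itself does not give a proof but simply cites \cite[Proposition 1]{ferrando}, so there is no argument in the paper to compare against; your write-up is exactly the kind of short verification one would supply in lieu of the citation.
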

\begin{proposition}\label{leinertTheorem} \cite[Proposition 2]{ferrando} Consider $f,g:\Se\rightarrow [-\infty,\infty]$, then
\begin{enumerate}
\item $\|g\| = 0$ iff $g = 0$ a.e.
\item The countable union of null sets is a null set.
\end{enumerate}
\end{proposition}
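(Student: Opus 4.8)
\textbf{Proof proposal for Proposition \ref{leinertTheorem}.}

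The plan is to reduce both statements to the structural properties of $\overline I$ collected in Proposition \ref{propertiesOfIBarra}, following the standard blueprint from non-lattice integration theory (Leinert--K\"onig), adapted to the present operator $\overline I = \overline I_0$.

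For part (1), the direction ``$g=0$ a.e. $\implies \|g\|=0$'' is immediate from the definition: the set $N=\{g\neq 0\}$ satisfies $\|\mathbf 1_N\|=0$, and $|g|\le \infty\cdot \mathbf 1_N$ pointwise, so by isotonicity and the convention $0\cdot\infty = 0$ together with positive homogeneity one gets $\|g\| = \overline I|g| \le \overline I(\infty \cdot \mathbf 1_N) = \sup_{k}\overline I(k\,\mathbf 1_N) = \sup_k k\|\mathbf 1_N\| = 0$. (One should double-check the interchange $\overline I(\infty\mathbf 1_N)=\sup_k\overline I(k\mathbf 1_N)$; it follows from isotonicity for ``$\ge$'' and, for ``$\le$'', by taking in the defining infimum for $\overline I(\infty\mathbf 1_N)$ the countable sum $\sum_k (\text{a cover of } k\mathbf 1_N \text{ of mass } \le \|k\mathbf 1_N\|+\varepsilon 2^{-k})$, whose total mass is $0$ when $\|\mathbf 1_N\|=0$.) For the converse, suppose $\|g\|=\overline I|g|=0$. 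Writing $N_k=\{|g|> 1/k\}$, we have $\frac1k \mathbf 1_{N_k}\le |g|$, so $\frac1k\|\mathbf 1_{N_k}\|\le \overline I|g| = 0$, hence $\|\mathbf 1_{N_k}\|=0$ for every $k$. Since $\{g\neq 0\}=\bigcup_k N_k$, part (2) (countable subadditivity applied to indicator functions, i.e.\ $\mathbf 1_{\bigcup N_k}\le \sum_k \mathbf 1_{N_k}$) gives $\|\mathbf 1_{\{g\neq 0\}}\|=0$, i.e.\ $g=0$ a.e.

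For part (2), let $(E_k)_{k\ge 1}$ be null sets and $E=\bigcup_k E_k$. Then $\mathbf 1_E \le \sum_{k\ge 1}\mathbf 1_{E_k}$ pointwise, so by countable subadditivity of $\overline I$ (Proposition \ref{propertiesOfIBarra}) and isotonicity, $\|\mathbf 1_E\| = \overline I(\mathbf 1_E)\le \sum_{k\ge 1}\overline I(\mathbf 1_{E_k}) = \sum_{k\ge 1}\|\mathbf 1_{E_k}\| = 0$. This is essentially a one-line argument once the abstract properties are in hand; note that part (2) is logically needed inside the proof of the converse in part (1), so it is cleaner to prove (2) first and then (1).

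The only genuinely delicate point — the ``main obstacle'' — is the handling of the value $+\infty$ in the chain $\|g\| = 0 \implies \|\mathbf 1_{\{g\neq 0\}}\|=0$ and in the reverse implication, since $g$ is allowed to take values in $[-\infty,\infty]$ and one must be careful that the conventions ($0\cdot\infty=0$, isotonicity extended to $[-\infty,\infty]$-valued functions) are used consistently and that the defining infimum for $\overline I$ genuinely admits covers of arbitrarily small total mass of the relevant sets. Everything else is a routine transcription of the countable-subadditivity estimate. I expect the author's proof to be a short citation-style argument that simply points to these two properties.
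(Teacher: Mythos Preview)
Your proof is correct. The paper does not actually prove this proposition; it is quoted verbatim from \cite{ferrando} and stated without argument, so there is no ``paper's own proof'' to compare against beyond the citation. Your derivation from the abstract properties of $\overline I$ (isotonicity, positive homogeneity, countable subadditivity) is exactly the standard Leinert--K\"onig argument that the cited reference carries out.

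One minor simplification: in the direction $g=0$ a.e.\ $\Rightarrow \|g\|=0$, you can avoid the slightly delicate interchange $\overline I(\infty\mathbf 1_N)=\sup_k\overline I(k\mathbf 1_N)$ by writing $\infty\cdot\mathbf 1_N=\sum_{k\ge 1}\mathbf 1_N$ pointwise and applying countable subadditivity directly: $\overline I(\infty\mathbf 1_N)\le \sum_{k\ge 1}\overline I(\mathbf 1_N)=0$. This uses only the properties already listed in Proposition~\ref{propertiesOfIBarra} and removes the need for your parenthetical ``double-check''. Your observation that part (2) should logically precede the converse in part (1) is also well taken.
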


All appearing equalities and inequalities are valid for all points in the spaces where the functions are defined unless qualified by an explicit a.e. The notation
$f \dot{=} g$ is also used for the equality being valid only a.e.

\vspace{.1in}
We introduce next the operator  $\overline{\sigma}_j : Q \rightarrow \mathcal{E}_j$,
which we will call a \emph{conditional superhedging outer integral}
(or conditional outer integral); it is the main tool to define  the notion of \emph{trajectorial supermartingales}, the main object of study in our paper.

\begin{definition}[{\bf  Conditional Outer Integral}] \label{cond_integ_def} For a node $(S,j)$ and a general $f \in Q$,
\begin{equation} \nonumber
\overline{\sigma}_j f(S) \equiv  \inf \left\{\sum_{m \geq 0}V^m: ~~f \leq  \sum_{m \geq 0} f_m \;\;\mbox{on}\; \Se_{(S,j)}\right\},
\end{equation}
where
\[f_0=\Pi_{j, n_0}^{V^0,H^0}\!\!\in \mathcal{E}_{(S,j)}; \; \mbox{for}\; m \geq 1, \; f_m= \liminf_{n\to\infty}~~\Pi_{j, n}^{V^m, H^m},~~\mbox{and}\;\;\Pi_{j, n}^{V^m, H^m}\!\!\in \mathcal{E}_{(S,j)}^+~~\forall ~~~n \geq j.\]
Define also $\underline{\sigma}_j f(S) \equiv -\overline{\sigma}_j(-f) (S)$.
We will also set $\overline{\sigma}f \equiv \overline{\sigma}_0 f$.
\end{definition}

In some cases we may use the notation  $\overline{\sigma}_{(S,j)}f$
to make clear that the quantity $\overline{\sigma}_{(S,j)}f$ keeps
$(S,j)$ fixed. More common and useful is our reliance on the
defining notation $\overline{\sigma}_{j}f(S)$ treating $\overline{\sigma}_{j}f$
as a function on $\mathcal{S}$. Note that the initial endowments $V^m$ at node $(S,j)$  may depend on $S$ through $(S_0,\ldots,S_j)$ in all appearances.  

\if
The following definitions will help us to provide a more uniform presentation of the results.
We will set, for $S \in \mathcal{S}$,
\begin{equation} \label{atTheEndOfTime}
\mathcal{S}_{(S, \infty)} \equiv \{S\}~~~\mbox{and}~~\overline{\sigma}_{\infty} f (S)=  f(S)~\mbox{for}~ \in Q,
\end{equation}
we also set $\underline{\sigma}_{\infty} f(S)= - \overline{\sigma}_{\infty} (-f)(S)
= f(S)$.
\fi

\begin{remark}
\noindent Note that $\overline{\sigma}_j f(S)= \overline{\sigma}_j f(S_0, \ldots, S_j)$. Also, $f_0$ can also be written in a similar form as the $f_m$, $m \geq 1$ by mean of  $f_0=\liminf_{n\to\infty}~~\Pi_{j, n}^{V^0, H^0}$ with $H_i^0\equiv 0\,$ for $\,i\ge n_0$.
\end{remark}

Here we indicate the intuitions behind the definition of the operator
$\overline{\sigma}_jf$ (with analogous explanations for $\overline{I}_jf$). The main
simple portfolio superhedging $f$ is given by $f_0$, the role of the idealization
of an infinite number of  nonnegative  portfolios
$\sum_{m \geq 1} f_m$ is used to detect general type II arbitrage nodes 
as  null sets.
This is in close analogy to the use of elementary regions to define the area
of  non-elementary regions of the plane (for example). In our case,  elementary regions are replaced by simple portfolios, a class closed under linear combinations (but not being a vector lattice) and playing the role of the simple functions in Lebesgue's theory of integration. The quantity $\overline{\sigma}_jf(S)$
is then interpreted as a conditional outer integral (we are conditioning on $(S,j)$ and hence restricting
the future to $\mathcal{S}_{(S,j)}$), a functional version of
Carath$\acute{\mbox{e}}$odory's outer measure. Another idealization, which is used in model free finance (\cite{perkowski}, \cite{vovk2}),  is the presence of $\liminf$ in the functions $f_m$, $m \geq 1$, which should only play a meaningful role in situations where an arbitrary large number of portfolio rebalances is warranted. In our results such idealization is used to detect
further null sets that appear `at infinite time', for example, in Doob's pointwise convergence theorem.
In particular, financial positions are considered null if they are cheap to superhedge by means of idealized portfolios allowing an unbounded number of transactions.
We note that the mentioned idealizations are in line with the modern theory of integration
where countable number of operations are allowed. As an alternative to this approach one could study
the trajectorial analogue to the modern theory of integration such as the finitely additive theory developed in \cite{dubins}.

\if
 Having a stopping time $\tau$ (see Definition \ref{stoppingTimeDefinition})  with $\tau(S)$ finite, we can then write
$\overline{\sigma}_{(S, \tau(S))}f$ whenever we want to make clear that the conditioning
integer $j$ can change with $S$. Again, we will rely mostly in the function notation that, for $\tau(S)$ finite, becomes $\overline{\sigma}_{\tau}f$ which evaluates as follows $\overline{\sigma}_{\tau(S)}f(S)$ or more succinctly $\overline{\sigma}_{\tau}f(S)$.
\fi

\subsection{Property $(L)$}
\vspace{.1in}
The property $(L_{(S,j)})$, introduced in the following definition,
generalizes a non-conditional version from  \cite{leinert} and will be called (conditional) \emph{continuity from below}.
Property $(L_{(S,j)})$ will be key to obtain several of the results in the paper, in particular it will imply that $\overline{\sigma}_j$ preserves the property of non-negativity.

\begin{definition}[{\bf  Property $(L_{(S,j)})$}] \label{lPropertyWithLimInf}
For a fixed node $(S,j)$, $~~~~f = \Pi^{V, H}_{j, n_f}\in \mathcal{E}_{(S, j)}$
and $f_m = \liminf_{n \rightarrow \infty} \Pi^{V^m, H^m}_{j, n}$ with $\Pi^{V^m, H^m}_{j, n} \in \mathcal{E}_{(S, j)}^+~\mbox{for all}~ n \geq j~\mbox{and}~ m \geq 1$, ~define property $(L_{(S,j)})$ by
\begin{equation} \nonumber 
(L_{(S,j)}):\quad f \leq \sum_{m \geq 1} f_m\;\;\mbox{on}\; \Se_{(S,j)}
\implies V \leq \sum_{m \geq 1}  V^m.
\end{equation}
If $(L_{(S,j)})$ holds for $S\in\Se\;\; \overline{I}-a.e.$, it will be written $(L_j)$ holds $a.e.$ (see also Definition \ref{assumptionL-ae}).
\end{definition}
\begin{remark}
Property $(L_{(S,j)})$ is equivalent to
\begin{equation}  \nonumber
 0 \leq \sum_{m \geq 0} f_m\;\;\mbox{on}\; \Se_{(S,j)} \implies  0 \leq \sum_{m \geq 0} V^m.
\end{equation}
where the $f_m,~m \geq 1$, are as in Definition \ref{lPropertyWithLimInf} and
$f_0 \in \mathcal{E}_{(S,j)}$.
\end{remark}

Since $(L_{(S,0)})$ does not depend on $S$, it will be denoted by $(L)$. If $(L_{(S,j)})$ holds, it follows that $\overline{\sigma}_j f(S) = V$ for $f= \Pi^{V, H}_{j,n} \in \mathcal{E}_{(S, j)}$ (see Proposition \ref{Properties_L}). It will be proven in Proposition \ref{Properties_L} of Appendix \ref{Paper 1} that $(L_{(S,j)})$ is equivalent to several useful alternative statements. In particular,  property $(L_{(S,j)})$
is equivalent to $0 \leq \overline{\sigma}_j 0 (S)$ (and so to $0 = \overline{\sigma}_j 0(S)$) which in turn implies $\underline{\sigma}_j f(S)  \leq \overline{\sigma}_jf(S)$ for any $f:\mathcal{S} \rightarrow [- \infty, \infty]$ (i.e. $f \in Q$).
$(L_{(S,j)})$ will be established under appropriate conditions in Section \ref{sec:(L)} and Appendix \ref{app:L}.


\begin{definition}[{\bf  Conditional Integrable Functions}] \label{integrable}
$f \in Q$
is called a \emph{conditional integrable}  function at $j$ if it satisfies:
\begin{equation} \nonumber
\underline{\sigma}_j f = \overline{\sigma}_j f,~~ \overline{I}-a.e.
\end{equation}
In particular, $f \in Q$ is integrable if
 $\underline{\sigma}_0 f= \overline{\sigma}_0 f$.
\end{definition}
Integrable functions will play a side role in our work as we will make clear at due time.

Note that the conditional outer integral $\overline{\sigma}_j f(S)$ and the conditional inner integral  $\underline{\sigma}_j f(S)$ are defined at any node $(S,j)$ and for any function $f\in Q$.  However, it is possible to see that, $\overline{\sigma}_j f(S)=-\infty$ and  so $\underline{\sigma}_j f(S)=+\infty$ for any bounded function $f \in Q$, if  $(L_{(S,j)})$ fails at node $(S,j)$. Therefore, the assumption in Definition \ref{assumptionL-ae} below will be explicitly required for several results.

\vspace{.2in}
\begin{definition}[{\bf Assumption $(L)-a.e.$}] \label{assumptionL-ae}
The following two properties will be referred as {\it the assumption}
$(L)$-a.e.:
\begin{itemize}
\item[i)]  $(L)$ (i.e. $(L_{(S,0)}$)) holds, 

\vspace{.1in} 
\item[ii)]  \begin{equation}\label{eq:N^L}
 	\mathcal{N}^{(L)} \equiv \{S\in\Se: \exists j\ge 0 \; s.t. \; (L_{(S,j)})\; \mbox{fails}\}
 \end{equation}
 is a null set (in particular, $(L_j)$ holds a.e. for every $j$).
\end{itemize}
\end{definition}
 Sufficient conditions for this assumption and examples will be presented in Section~\ref{sec:(L)} and Appendix \ref{app:L}.

\begin{remark} \label{L0ForNonTrivialityOfComplement}
a) If (L) fails, then the set $\mathcal{N}^{(L)}$ in \eqref{eq:N^L} equals the whole trajectory set $\mathcal{S}$. Therefore, in order to avoid trivialities, we include property (L) in the definition of the assumption $(L)$-a.e.
\\[0.1cm] 
b)	The continuity property $(L)$ implies that $\overline{I}({\bf 1}_{\mathcal{S}})=1$ and so $1= \overline{I}({\bf 1}_{\mathcal{S}}) \leq
\overline{I}({\bf 1}_{\mathcal{S} \setminus \mathcal{M}})+ \overline{I}({\bf 1}_{\mathcal{M}}) \leq  \overline{I}({\bf 1}_{\mathcal{S} \setminus \mathcal{M}}) \leq 1$ where $\mathcal{M}$ is any arbitrary null set. In particular, $\overline{I}({\bf 1}_{\mathcal{S} \setminus \mathcal{N}^{(L)}})=1$ whenever $(L)$-a.e. holds.
\end{remark}

\begin{remark}\label{rem:ShaferVovk}
	In this remark we relate our setting, at a conceptual level, to the abstract discrete-time game-theoretic framework which is detailed in Chapter 7 of \cite{shafer}. In their setting, our nodes $(S,j)$ correspond to \emph{situations} $s$ and our (conditional) outer integrals $\overline \sigma_{(S,j)}$ can be considered as analogues of their \emph{upper expected values} $\overline{\mathbb{E}}_s$ in situation $s$, although there are some (subtle, but important) differences in the exact definition of both families of operators. Lemma 7.6 in \cite{shafer} (applied to a constant function $f\equiv c$) implies that $\overline{\mathbb{E}}_s[c]=c$ for every constant $c$ and every situation $s$. The analogue of such a condition in our setting is to ask that $(L_{(S,j)})$ holds for every node $(S,j)$. Our assumption $(L)$-a.e. is considerably weaker and allows, in view of Theorem \ref{thm:L}, to deal with arbitrage opportunities which arise at arbitrage nodes of type II or by trading on an unbounded time horizon (the latter ones in the case that the model is trajectorially incomplete, see Section \ref{sec:(L)}). 
\end{remark}

\section{Supermartingales: Definition and Examples}  \label{towerProperty}

Next is our definition of \emph{trajectorial supermartingales, submartingales, and martingales} and of \emph{stopping times}. 

\begin{definition} \label{superMartingale}
Consider a sequence $(f_j)_{j \geq 0}$ of non-anticipative functions $f_j:\mathcal{S} \rightarrow [- \infty, \infty]$, $j\ge0$. We say,

\vspace{.1in}\noindent $(f_j)$ is a \emph{supermartingale} if
\begin{equation}\label{supermartingale}
\overline{\sigma}_j f_{j+1} \leq f_j~~a.e.\;\; 0 \leq j <\infty,
\end{equation}
$(f_j)$ is a \emph{submartingale} if
\begin{equation}\label{submartingale}
f_j \leq \underline{\sigma}_j f_{j+1}~~a.e.\;\;0 \leq j < \infty,
\end{equation}
$(f_j)$ is a \emph{martingale} if
\begin{equation}\nonumber 
\underline{\sigma}_j f_{j+1}= \overline{\sigma}_j f_{j+1}= f_j~~a.e.\;\; 0 \leq j < \infty.
\end{equation}
\end{definition}
\begin{remark} \label{rem:supermartingale} Notice that if $(f_j)$ is a martingale, then according to Definition \ref{integrable}, $f_{j+1}$ is conditionally  integrable at $j$ for any $j\ge 0$.
 Moreover, if $(f_j)$ is a sub- and a supermartingale,  then, under Assumption $(L)$-a.e., $(f_j)$ is a martingale, since  
 by Proposition \ref{Properties_L} in Appendix \ref{Paper 1}\; $\underline{\sigma}_j f \le \overline{\sigma}_j f$ holds $a.e$ for any $f \in Q$.
\end{remark}

\begin{definition}[{\bf Stopping Time}, as per  Definition 8 in \cite{ferrando}]  \label{stoppingTimeDefinition}
	Given a trajectory space $\Se$, a \emph{trajectory based stopping time} (or \emph{stopping time} for short) is a function $\tau:\Se\rightarrow \mathbb{N}\cup\{\infty\}$ such that:
	\begin{equation*}
		\text{for any}\quad S,S'\in\Se\quad\text{if}\quad S_k=S'_k\quad\text{for}\quad 0\leq k\leq \tau(S),\quad\text{then}\quad \tau(S)=\tau(S').
	\end{equation*}
\end{definition}

We next provide examples for the above definitions. 

\begin{ej}\label{ex:supermart}
a) Suppose that $(L)$-a.e. holds. If $V\in \mathbb{R}$ and $(H_j)_{j\geq 0}$ is a non-anticipative sequence, then 
$$
M_j(S) \equiv V+\sum_{i=0}^{j-1} H_i(S)\Delta_iS,\quad S\in \mathcal{S},\;j\geq 0,
$$ 
satisfies the martingale property $\overline{\sigma}_j M_{j+1} (S)= M_j(S)$ whenever $(L_{(S,j)})$ holds (this is so by Proposition \ref{Properties_L} item $(4)$). In particular, the projection maps $(T_j)_{j\geq 0}$, $T_j(S) \equiv S_j$ are a martingale sequence (with $H_i\equiv 1$ and $V=S_0$). 

\vspace{.1in}
 b) For any $f\in Q$, the sequence $(f_j)_{j \geq 0}$ defined by
$$
f_j(S) \equiv \overline{\sigma}_jf(S),\quad S\in \mathcal{S},\;j\geq 0,
$$
forms a supermartingale and the sequence $(f_j)$ defined by
$$
f_j(S) \equiv \underline{\sigma}_jf(S),\quad S\in \mathcal{S},\;j\geq 0,
$$
forms a submartingale by the tower property in Proposition \ref{mainDirectionOfTowerProperty s-t} below.

\vspace{.1in}
c) If $(f_j)_{j \geq 0}$ is a supermartingale and  $(D_j)_{j\geq 0}$ is a non-anticipative sequence of nonnegative functions, then the \emph{supermartingale transform} $(g_j)_{j\geq 0}$
$$
g_j(S) \equiv f_0 + \sum_{i=0}^{j-1} D_i (f_{i+1}- f_i),\quad S\in \mathcal{S},\;j\geq 0,
$$ 
is again a supermartingale. This follows from subadditivity of $\overline{\sigma}_j$ and the remark that:
$\overline{\sigma}_j g_{j+1}(S) \leq \overline{\sigma}_j g_{j}(S) + \overline{\sigma}_j(D_j (f_{j+1} - f_j))(S) \leq g_j(S)+ D_j(S)~\overline{\sigma}_j (f_{j+1}- f_j)(S) \leq g_j(S)+ D_j(S)[ \overline{\sigma}_j f_{j+1}(S) - f_j(S)] \leq
g_j(S)$ (where we relied on Proposition \ref{requiredProperties}).

If $(f_j)$ is a submartingale, then we call $(g_j)$ (defined as above) a submartingale transform, which by the duality   $f_j \rightarrow -f_j$, $\overline{\sigma}_j \rightarrow \underline{\sigma}_j$, is a submartingale. 

\vspace{.1in}
d) If $(f_j)_{j \geq 0}$ is a supermartingale and $\tau$ is a stopping time, then the \emph{stopped sequence} 
$(f_{ j}^{\tau})_{j \geq 0}$  defined by 
\begin{equation} \nonumber
	f_j^{\tau}(S) \equiv f_{\tau(S)\wedge j}(S),
\end{equation}
is a supermartingale. This is a consequence of the previous item with the choice 
$$
D_i(S)=\begin{cases} 1,& \tau(S)>i \\ 0, & \tau(S)\leq i \end{cases} ,\quad S\in \mathcal{S},\;j\geq 0,
$$
which is non-anticipative by Lemma \ref{st-portfolio} below.
\end{ej}


\begin{lemma}\label{st-portfolio} Let $\tau$ a stopping time and $H^k=(H^k_i)_{i\ge 0},\;k=1,2$ sequences of non-anticipative functions. For $S\in\Se, j\ge 0$ define the following functions on $\mathcal{S}_{(S,j)}$:
\[
H^{\tau}_i(\tilde{S})=\left\{
\begin{array}{lll}
H^1_i(\tilde{S}) & \mbox{if} & j\le i < \tau(\tilde{S})\\
H^2_i(\tilde{S}) & \mbox{if} & \tau(\tilde{S})\le i.
\end{array}
\right.
\]
Then $H^{\tau}=(H^{\tau}_i)_{i\ge 0}$ is a sequence of non-anticipative functions.
\end{lemma}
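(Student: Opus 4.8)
The plan is to verify the defining property of non-anticipativity directly. Fix an index $i\ge j$ and two trajectories $\tilde S,\hat S\in\mathcal{S}_{(S,j)}$ with $\tilde S_k=\hat S_k$ for $j\le k\le i$; since both lie in $\mathcal{S}_{(S,j)}$ we also have $\tilde S_k=\hat S_k=S_k$ for $0\le k\le j$, hence in fact $\tilde S_k=\hat S_k$ for all $0\le k\le i$. The goal is to show $H^{\tau}_i(\tilde S)=H^{\tau}_i(\hat S)$.

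The first step is an auxiliary observation about the stopping time, namely that the ``event'' $\{\tau\le i\}$ is determined by the first $i+1$ coordinates: if $\tilde S_k=\hat S_k$ for $0\le k\le i$ and $\tau(\tilde S)\le i$, then $\tau(\hat S)=\tau(\tilde S)$. Indeed, in that case $\tilde S_k=\hat S_k$ holds in particular for $0\le k\le\tau(\tilde S)$, so the defining property of a stopping time (Definition \ref{stoppingTimeDefinition}) gives $\tau(\hat S)=\tau(\tilde S)$. By symmetry the same holds with $\tilde S$ and $\hat S$ interchanged; consequently $\tau(\tilde S)\le i$ if and only if $\tau(\hat S)\le i$, and in that case the two values coincide (this also covers the value $+\infty$, which simply never satisfies $\tau\le i$).

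With this in hand I distinguish two cases. If $\tau(\tilde S)\le i$, then also $\tau(\hat S)\le i$ and $\tau(\hat S)=\tau(\tilde S)$ by the auxiliary observation, so by definition $H^{\tau}_i(\tilde S)=H^2_i(\tilde S)$ and $H^{\tau}_i(\hat S)=H^2_i(\hat S)$; since $H^2$ is non-anticipative and $\tilde S_k=\hat S_k$ for $j\le k\le i$, these agree. If instead $\tau(\tilde S)>i$, then $\tau(\hat S)>i$ as well, so $H^{\tau}_i(\tilde S)=H^1_i(\tilde S)$ and $H^{\tau}_i(\hat S)=H^1_i(\hat S)$, and again these agree by non-anticipativity of $H^1$. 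In either case $H^{\tau}_i(\tilde S)=H^{\tau}_i(\hat S)$, as required.

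There is essentially no serious obstacle here; the only point requiring care is that one cannot invoke non-anticipativity of $H^1$ or $H^2$ before knowing that both $\tilde S$ and $\hat S$ fall into the same branch of the case split, which is precisely what the auxiliary observation on $\{\tau\le i\}$ secures. One should also note the mild abuse that $H^1,H^2$, a priori defined on $\mathcal{S}$, are used here through their restrictions to $\mathcal{S}_{(S,j)}$, which remain non-anticipative in the conditional sense.
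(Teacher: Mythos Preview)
Your proof is correct and follows essentially the same approach as the paper's own proof: both arguments reduce to the observation that the event $\{\tau\le i\}$ is determined by the first $i+1$ coordinates (a direct consequence of the stopping time definition), and then split into the two branches of the definition of $H^\tau_i$, invoking non-anticipativity of $H^1$ or $H^2$ in each. Your write-up is in fact slightly more careful than the paper's, which tacitly compares $\tilde S$ with the base trajectory $S$ and leaves the case $\tau(S)<j$ implicit.
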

\begin{proof}
Let $\tilde{S}_k=S_k,\; j\le k\le i$. If $j\le \tau(S)\le i \Rightarrow \tau(\tilde{S})=\tau(S)\le i\; \& \; H^{\tau}_i(\tilde{S})=H^2_i(\tilde{S})=H^{\tau}_i(S)$. While, by symmetry in previous reasoning, $i<\tau(S) \Rightarrow i<\tau(\tilde{S})\; \& \; H^{\tau}_i(\tilde{S})=H^1_i(S)=H^1_i(\tilde{S})=H^{\tau}_i(\tilde{S})$.
\end{proof}

\vspace{.1in}
\begin{proposition} [{\bf  Tower Inequality}] \label{mainDirectionOfTowerProperty s-t}
Let  $S$ be an arbitrary element of $\mathcal{S}$ and  $j\leq k$ non-negative integers; also let $f \in Q$. Then, 
\begin{equation}  \label{tower_ineq_0 s-tGeneralVersion}
\overline{\sigma}_{j}( \overline{\sigma}_{k}~f)(S) \le \overline{\sigma}_{j} f(S).
\end{equation}

\end{proposition}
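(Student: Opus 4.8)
The plan is to unfold both outer integrals through their defining superhedging families and paste the witnessing portfolios together into a single witnessing family for the left-hand side, exploiting that initial endowments are allowed to depend on the node.

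\medskip

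\noindent\textbf{Setup.} Fix $S\in\mathcal S$ and $j\le k$, and fix $f\in Q$. Write $g\equiv \overline\sigma_k f$, so $g$ is a non-anticipative function on $\mathcal S$ (constant on each $\mathcal S_{(\tilde S,k)}$). I want to show $\overline\sigma_j g(S)\le \overline\sigma_j f(S)$. If $\overline\sigma_j f(S)=+\infty$ there is nothing to prove, so assume it is finite (the case $-\infty$ is handled the same way by letting the bound below tend to $-\infty$). Let $\varepsilon>0$ and pick, by definition of $\overline\sigma_j f(S)$, a superhedge
\[
f\le \sum_{m\ge 0} f_m \quad\text{on }\mathcal S_{(S,j)},
\]
with $f_0=\Pi^{V^0,H^0}_{j,n_0}\in\mathcal E_{(S,j)}$, $f_m=\liminf_{n\to\infty}\Pi^{V^m,H^m}_{j,n}$ for $m\ge1$ with $\Pi^{V^m,H^m}_{j,n}\in\mathcal E^+_{(S,j)}$, and $\sum_{m\ge0}V^m\le \overline\sigma_j f(S)+\varepsilon$.

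\medskip

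\noindent\textbf{Restriction to sub-nodes at level $k$ and reassembly.} For each $\tilde S\in\mathcal S_{(S,j)}$, the restriction of the inequality $f\le\sum_{m\ge0}f_m$ to $\mathcal S_{(\tilde S,k)}$ gives a valid superhedge of $f$ on that conditional space; the restricted simple portfolio $\Pi^{V^0,H^0}_{j,n_0}$ can be re-expressed on $\mathcal S_{(\tilde S,k)}$ as an element of $\mathcal E_{(\tilde S,k)}$ by splitting off the (now constant on $\mathcal S_{(\tilde S,k)}$) increments accrued between times $j$ and $k$ into a new initial endowment $W^0(\tilde S)$; similarly each $\liminf_{n\to\infty}\Pi^{V^m,H^m}_{j,n}$ restricted to $\mathcal S_{(\tilde S,k)}$ equals $\liminf_{n\to\infty}\Pi^{W^m(\tilde S),\tilde H^m}_{k,n}$ for suitable non-anticipative $\tilde H^m$ on $\mathcal S_{(\tilde S,k)}$ and endowments $W^m(\tilde S)$ with $W^m(\tilde S)\ge0$ (since the truncated portfolio values stay nonnegative). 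By definition of $\overline\sigma_k f$ this yields, for every $\tilde S\in\mathcal S_{(S,j)}$,
\[
g(\tilde S)=\overline\sigma_k f(\tilde S)\le \sum_{m\ge0}W^m(\tilde S).
\]
Now I want to recognize the right-hand side, as a function of $\tilde S\in\mathcal S_{(S,j)}$, as itself a legitimate superhedging sum for $\overline\sigma_j g(S)$. Summing the ``carry'' increments back in: for $m\ge1$, the map $\tilde S\mapsto W^m(\tilde S)$ is (up to the convention identifying an endowment at a level-$k$ node) exactly the value at time $k$ of the nonnegative portfolio $\Pi^{V^m,H^m}_{j,k}$, which is itself an element of $\mathcal E^+_{(S,j)}$, hence of the form $\liminf_{n}\Pi^{\cdot}_{j,n}$ with portfolio held flat after time $k$; and $W^0$ corresponds to $f_0$ again. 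Thus $\sum_{m\ge0}W^m(\cdot)$ is of the admissible type appearing in the definition of $\overline\sigma_j g(S)$ with the same total cost $\sum_{m\ge0}V^m\le\overline\sigma_j f(S)+\varepsilon$, giving $\overline\sigma_j g(S)\le\overline\sigma_j f(S)+\varepsilon$. Letting $\varepsilon\downarrow0$ finishes the proof.

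\medskip

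\noindent\textbf{Main obstacle.} The delicate point is the bookkeeping in the re-indexing step: one must check that when $\Pi^{V^m,H^m}_{j,n}$ is split at time $k$ into ``value accrued up to $k$'' plus ``portfolio from $k$ onward'', the resulting objects genuinely lie in $\mathcal E_{(S,j)}$ and $\mathcal E^+_{(\tilde S,k)}$ respectively — in particular that the new initial endowments $W^m(\tilde S)$ depend on $\tilde S$ only through $(\tilde S_0,\dots,\tilde S_k)$ (so they are legitimate node-dependent constants at level $k$) and are nonnegative for $m\ge1$ (which uses $\Pi^{V^m,H^m}_{j,n}\in\mathcal E^+_{(S,j)}$ for all $n\ge j$, evaluated at $n=k$), and that passing to $\liminf_{n\to\infty}$ commutes with stripping off the finitely many increments between $j$ and $k$. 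Once this is set up carefully, the inequality is essentially definitional. I expect the paper's proof to phrase this as a concatenation-of-portfolios argument, possibly invoking a previously established decomposition lemma for elements of $\mathcal E_{(S,j)}$ at an intermediate node.
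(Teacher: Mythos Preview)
Your argument is correct and follows essentially the same route as the paper: start from a superhedging family for $f$ on $\mathcal{S}_{(S,j)}$, split each portfolio at time $k$ so that the carried value $\Pi_{j,k}^{V^m,H^m}(\tilde S)$ becomes the new initial endowment at the sub-node $(\tilde S,k)$, deduce $\overline\sigma_k f(\tilde S)\le\sum_{m\ge0}\Pi_{j,k}^{V^m,H^m}(\tilde S)$, and then observe that the right-hand side is itself an admissible superhedging sum for $\overline\sigma_j(\overline\sigma_k f)(S)$ with cost $\sum_m V^m$. The paper compresses this into three sentences, whereas you spell out the bookkeeping (nonnegativity of $W^m$ for $m\ge1$ from evaluating the nonnegative portfolio at $n=k$, constancy of $W^m$ on level-$k$ nodes, commutation of $\liminf$ with the finite carry), but the substance is identical and your concerns in the ``Main obstacle'' paragraph all resolve exactly as you anticipate.
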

\begin{proof}
We begin with the following inequality valid on $\mathcal{S}_{(S,j)}$:\\ $f \leq \Pi_{j, n_0}^{V^0, H^0}+ \sum_{m \geq 1} \liminf_{n \rightarrow \infty} \Pi_{j, n}^{V^m, H^m}$ where $\Pi_{j, n_0}^{V^0, H^0}\in \mathcal{E}_{(S,j)}$  and  $\Pi_{j, n}^{V^M, H^m} \in \mathcal{E}^+_{(S,j)}$ for every $n\geq j$. This inequality implies that $\overline{\sigma}_k f(\tilde S) \leq
\sum_{m\geq 0} \Pi_{j, k}^{V^m, H^m}(\tilde S)$ holds for all $\tilde S \in \mathcal{S}_{(S,j)}$, which in turn implies $\overline{\sigma}_j(\overline{\sigma}_k f)(S) \leq \sum_{m \geq 0} V^m(S)$. Therefore the result follows by the definition of $\overline{\sigma}_j$ as an infimum.
\end{proof}

We next  present sufficient conditions for the tower inequality to turn into an equality (and so obtaining an analogous result to the classical tower property), which in turn implies that $(\overline{\sigma}_jf)_{j\geq 0}$ is a martingale sequence.

\begin{corollary}\label{cor:martingale}
	Assume $(L)$-a.e. and fix some  $f \in Q$. Then, $(\overline{\sigma}_j f)_{j\geq 0}$ is a  martingale whenever $a)$ or $b)$ below hold:
	\begin{itemize}
		\item[a)] 	$f$ is conditionally integrable for every $j\geq 0$;
		\item[b)]   $f$ is integrable and $\overline{I}=\overline{\sigma}$ on positive functions with finite maturity.
	\end{itemize}
	\end{corollary}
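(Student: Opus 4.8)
The plan is to reduce the assertion, in both cases, to verifying the single almost-everywhere inequality
\[
\overline{\sigma}_jf\ \leq\ \underline{\sigma}_j\bigl(\overline{\sigma}_{j+1}f\bigr)\qquad\text{for every }j\geq0,
\]
and then to check it under each of the hypotheses $(a)$, $(b)$. To see that this suffices, write $g_j:=\overline{\sigma}_jf$. By Example \ref{ex:supermart}(b) (which rests on the tower inequality, Proposition \ref{mainDirectionOfTowerProperty s-t}), $(g_j)_{j\geq0}$ is a supermartingale, so $\overline{\sigma}_j g_{j+1}\leq g_j$ a.e.; and under Assumption $(L)$-a.e., Proposition \ref{Properties_L} gives $\underline{\sigma}_j g_{j+1}\leq\overline{\sigma}_j g_{j+1}$ a.e. Hence, once the displayed inequality is known, the a.e.\ chain $g_j\leq\underline{\sigma}_j g_{j+1}\leq\overline{\sigma}_j g_{j+1}\leq g_j$ collapses to $\underline{\sigma}_j g_{j+1}=\overline{\sigma}_j g_{j+1}=g_j$ a.e., i.e.\ $(g_j)$ is a martingale. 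Throughout we use, besides subadditivity and isotonicity of the operators (Proposition \ref{requiredProperties}), that $\overline{\sigma}_j$ (hence $\underline{\sigma}_j$) is monotone with respect to a.e.\ inequalities and is unaffected by modifications on null sets; these are routine consequences of subadditivity, of $\overline{\sigma}_j\leq\overline{I}_j$ on $P$, and of the fact that a global null function is conditionally null at almost every node.

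For $(a)$: conditional integrability of $f$ at $j$ gives $\overline{\sigma}_jf\doteq\underline{\sigma}_jf$, while Example \ref{ex:supermart}(b) says that $(\underline{\sigma}_if)_{i\geq0}$ is a submartingale, so $\underline{\sigma}_jf\leq\underline{\sigma}_j(\underline{\sigma}_{j+1}f)$ a.e. Since $\underline{\sigma}_{j+1}f\leq\overline{\sigma}_{j+1}f$ a.e.\ (Proposition \ref{Properties_L} under $(L)$-a.e.), a.e.-monotonicity of $\underline{\sigma}_j$ yields
\[
\overline{\sigma}_jf\ \doteq\ \underline{\sigma}_jf\ \leq\ \underline{\sigma}_j\bigl(\underline{\sigma}_{j+1}f\bigr)\ \leq\ \underline{\sigma}_j\bigl(\overline{\sigma}_{j+1}f\bigr),
\]
which is the required inequality.

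For $(b)$: the strategy is to show that the hypotheses force $f$ to be conditionally integrable at every $j$, whereupon $(a)$ applies. Let $c:=\overline{\sigma}_0f=\underline{\sigma}_0f$, which we may take to be finite (the cases $c=\pm\infty$ being degenerate). Fix $j$ and set $D_j:=\overline{\sigma}_jf-\underline{\sigma}_jf=\overline{\sigma}_jf+\overline{\sigma}_j(-f)$. Because $\overline{\sigma}_jf$ and $\underline{\sigma}_jf$ are constant on each conditional space $\mathcal{S}_{(S,j)}$, the function $D_j$ has (finite) maturity $j$; and $D_j\geq0$ a.e.\ since $\underline{\sigma}_jf\leq\overline{\sigma}_jf$ a.e.\ under $(L)$-a.e. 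By subadditivity of $\overline{\sigma}_0$ and the tower inequality applied to $f$ and to $-f$,
\[
\overline{\sigma}_0 D_j\ \leq\ \overline{\sigma}_0\bigl(\overline{\sigma}_jf\bigr)+\overline{\sigma}_0\bigl(\overline{\sigma}_j(-f)\bigr)\ \leq\ \overline{\sigma}_0 f+\overline{\sigma}_0(-f)\ =\ c+(-c)\ =\ 0 ,
\]
using $\overline{\sigma}_0(-f)=-\underline{\sigma}_0f=-c$; while $D_j\geq0$ a.e.\ and isotonicity give $\overline{\sigma}_0 D_j\geq\overline{\sigma}_0 0=0$. Hence $\overline{\sigma}_0 D_j=0$. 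Replacing $D_j$ by $D_j\vee0$ -- which agrees with $D_j$ a.e., is a non-negative function of finite maturity, and leaves $\overline{\sigma}_0 D_j$ and all null-set statements unchanged -- the hypothesis that $\overline{I}=\overline{\sigma}$ on positive functions of finite maturity yields $\overline{I}(D_j\vee0)=\overline{\sigma}(D_j\vee0)=0$, so that $\|D_j\|=\|D_j\vee0\|=\overline{I}(D_j\vee0)=0$ and, by Proposition \ref{leinertTheorem}, $D_j\doteq0$, i.e.\ $\overline{\sigma}_jf\doteq\underline{\sigma}_jf$. As $j$ was arbitrary, $f$ is conditionally integrable at every $j$ and $(a)$ applies.

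The delicate points are not the manipulations of the tower inequality but the bookkeeping of the almost-everywhere notions in case $(a)$ -- the compatibility of $\overline{\sigma}_j$ with a.e.\ (in)equalities and the passage from global to conditional null sets -- which is precisely where the sublinear, non-lattice structure of $\overline{\sigma}_j$ departs from the classical linear conditional expectation. In case $(b)$ the only genuinely new observation is that $D_j=\overline{\sigma}_jf-\underline{\sigma}_jf$ is automatically of finite maturity; this is exactly what makes the hypothesis $\overline{I}=\overline{\sigma}$ on finite-maturity functions usable and reduces $(b)$ to $(a)$.
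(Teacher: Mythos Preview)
Your proof is correct and follows essentially the same route as the paper: establish the a.e.\ chain $\underline{\sigma}_j f \le \underline{\sigma}_j[\underline{\sigma}_{j+1}f] \le \underline{\sigma}_j[\overline{\sigma}_{j+1}f] \le \overline{\sigma}_j[\overline{\sigma}_{j+1}f] \le \overline{\sigma}_j f$ and collapse it using conditional integrability; for $(b)$, show that the defect $D_j=\overline{\sigma}_jf-\underline{\sigma}_jf$ has $\overline{\sigma}_0 D_j=0$, then invoke the hypothesis $\overline I=\overline\sigma$ on the nonnegative finite-maturity function $(D_j)_+$ to conclude $D_j\doteq 0$, reducing to $(a)$.

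The one noteworthy difference is in part $(b)$: where the paper first verifies that both $\overline{\sigma}_kf$ and $\underline{\sigma}_kf$ are integrable (by collapsing the chain at $j=0$) and then appeals to the linearity Corollary~\ref{linearityOfSigma} to obtain $\overline{\sigma}[\overline{\sigma}_kf-\underline{\sigma}_kf]=\overline{\sigma}[\overline{\sigma}_kf]+\overline{\sigma}[-\underline{\sigma}_kf]=0$, you instead apply subadditivity of $\overline{\sigma}_0$ directly to $D_j=\overline{\sigma}_jf+\overline{\sigma}_j(-f)$ and bound each summand by the tower inequality. This bypasses the linearity corollary and its prerequisite integrability checks, and is a genuine (if minor) streamlining. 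The observation that $D_j$ automatically has finite maturity $j$ is, as you note, exactly what makes the finite-maturity hypothesis on $\overline I=\overline\sigma$ bite; the paper uses the same fact implicitly when applying the hypothesis to $(\overline{\sigma}_kf-\underline{\sigma}_kf)_+$.
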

\begin{proof}
	We start with the following preliminary considerations: 
	 Since $(L)$-a.e. holds, we conclude from Proposition \ref{Properties_L} that $\underline{\sigma}_j g\leq \overline{\sigma}_j g$ a.e. for every $g\in Q$ and $j\geq 0$. Applying this twice with $g=f$ and with  $g=\overline{\sigma}_k f$, in view of Corollary \ref{cor:leq_ae}, the following chain of inequalities holds a.e.
	$$
	\underline{\sigma}_j[\underline{\sigma}_k f]\leq \underline{\sigma}_j[\overline{\sigma}_k f]\leq \overline{\sigma}_j[\overline{\sigma}_k f].
	$$
	Applying Theorem \ref{mainDirectionOfTowerProperty s-t}, we obtain,
	\begin{equation}  \label{tower_ineq s-t}
		\underline{\sigma}_{j} f\le \underline{\sigma}_{j}[\underline{\sigma}_{k} f]\le \underline{\sigma}_{j}[\overline{\sigma}_{k} f] \leq \overline{\sigma}_{j}[\overline{\sigma}_{k} f]\le \overline{\sigma}_{j} f,\quad a.e.
	\end{equation}

	a)	The conditional integrability assumption now turns all a.e.-inequalities in \eqref{tower_ineq s-t} into a.e.-identities. In particular,  $(\overline{\sigma}_j f)_{j\geq 0}$ is a  martingale.
	
	b) Let $j=0$. Then, the  integrability assumption turns all inequalities in \eqref{tower_ineq s-t} into identities. In particular,
	we obtain $\overline{\sigma}[\overline{\sigma}_kf]=\underline{\sigma}[\underline{\sigma}_kf]$,  $\overline{\sigma}[\overline{\sigma}_kf]=\underline{\sigma}[\overline{\sigma}_kf]$ and also $\underline{\sigma}[\underline{\sigma}_kf]=\overline{\sigma}[\underline{\sigma}_kf]$ (with $-f$ in place of $f$).
	We then have access to Corollary \ref{linearityOfSigma}  to compute
	\begin{equation} \nonumber
		\overline{\sigma}[\overline{\sigma}_k f - \underline{\sigma}_k f] =  \overline{\sigma}[\overline{\sigma}_k f]+ \overline{\sigma}[-\underline{\sigma}_k f]= \overline{\sigma}[\overline{\sigma}_k f]- \underline{\sigma}[\underline{\sigma}_k f]=0.
	\end{equation}
	Therefore, given that $\overline{\sigma}_k f - \underline{\sigma}_k f \geq 0$ a.e.  (as per Proposition \ref{Properties_L}) we have
	$$\overline{I}[(\overline{\sigma}_k f - \underline{\sigma}_k f)_+]=
	\overline{\sigma}[(\overline{\sigma}_k f - \underline{\sigma}_k f)_+]=\overline{\sigma}[\overline{\sigma}_k f - \underline{\sigma}_k f]=0,$$ which, by Proposition \ref{leinertTheorem} item (1), implies $\overline{\sigma}_k f - \underline{\sigma}_k f \leq 0$ a.e. The two inequalities together yield  $\overline{\sigma}_k f = \underline{\sigma}_k f$ a.e., hence the conditional integrability of $f$ at $k$. Since $k$ is arbitrary, b) is reduced to a). 
\end{proof}

\section{Supermartingale decomposition}\label{representationTheorem}

In this section we prove a supermartingale representation theorem. It can be considered as an analogue of the uniform Doob decomposition in discrete time (see, e.g., Theorem 7.5 in \cite{follmer}) or the optional decomposition theorem in continuous time \cite{kramkov}, which apply to stochastic processes that are supermartingales simultaneously under a family of probability measures. 

\begin{theorem}[{\bf Supermartingale decomposition}]\label{thm:decomposition} 
Under Assumption $(L)$-a.e.,	let $(f_j)_{j\ge 0}$ be a sequence of
	non-anticipating real-valued functions. Then, the following assertions are equivalent:
	\begin{itemize}
		\item [(i)] $(f_j)_{j\ge 0}$ is a supermartingale.
		\vspace{.05in}
		\item [(ii)] For every sequence $(\delta_j)_{j\ge 0 }$ of positive real numbers there are sequences $(H_j)_{j\ge 0}$ and $(A_j)_{j\ge 0}$, of non-anticipating real-valued functions defined on 
		$\mathcal{S}$, such that $(A_j)_{j\ge 0}$ is nondecreasing, $A_0 = 0$, and
		\[
		f_i(S)=f_0+\sum_{j=0}^{i-1}H_j(S)\Delta_jS-A_i(S)+\sum_{j=0}^{i-1}\delta_j,
		\]
		for every $S \in \Se \setminus N_f$ and $i\ge 0$. Here $N_f$ is an $\overline{I}$-null set independent of  $(\delta_j)_{j\ge 0 }$.
	\end{itemize}
\end{theorem}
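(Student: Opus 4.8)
The implication (ii) $\Rightarrow$ (i) is the routine direction, so I would dispatch it first. Given the representation for a fixed sequence $(\delta_j)$, write $g_j \equiv f_0 + \sum_{k=0}^{j-1}H_k\Delta_k S$; this is a martingale a.e. by Example \ref{ex:supermart}(a) (using that $(L_{(S,j)})$ holds a.e.). Then $f_j = g_j - A_j + \sum_{k=0}^{j-1}\delta_k$ a.e., and since $A_j$ is nondecreasing and non-anticipative and $\sum_{k<j}\delta_k$ is a constant at node $(S,j)$, subadditivity and positive homogeneity of $\overline{\sigma}_j$ (Propositions \ref{propertiesOfIBarra}, \ref{requiredProperties}) give, a.e.,
\[
\overline{\sigma}_j f_{j+1} \le \overline{\sigma}_j g_{j+1} + \overline{\sigma}_j(-A_{j+1}) + \sum_{k=0}^{j}\delta_k \le g_j - A_j + \sum_{k=0}^{j-1}\delta_k + \delta_j - (A_{j+1}-A_j) \le f_j,
\]
using $A_{j+1}\ge A_j$ and $\overline{\sigma}_j(-A_{j+1})\le -A_j + \overline{\sigma}_j(A_j - A_{j+1}) \le -A_j$. (I would phrase this carefully with the a.e.-null sets, taking their countable union over $j$, which is null by Proposition \ref{leinertTheorem}.)

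For (i) $\Rightarrow$ (ii), the heart of the matter is a one-step argument: at (a.e.) node $(S,j)$, since $\overline{\sigma}_j f_{j+1}(S)\le f_j(S)$, and $\overline{\sigma}_j$ is a sublinear superhedging integral, I want to extract a simple portfolio position. Concretely, the plan is to show that for any $\delta>0$ there is a single non-anticipative $H_j$ on $\mathcal{S}_{(S,j)}$ with
\[
f_{j+1}(\tilde S) \le f_j(S) + H_j(S)\,\Delta_j\tilde S + \delta \quad\text{for all }\tilde S\in\mathcal{S}_{(S,j)},
\]
except on a conditionally null set. This is where the structure of $\overline{\sigma}_j$ enters: by definition of $\overline{\sigma}_j f_{j+1}(S)$ as an infimum there is an admissible decomposition $f_{j+1}\le \Pi^{V^0,H^0}_{j,n_0} + \sum_{m\ge1}\liminf_n \Pi^{V^m,H^m}_{j,n}$ on $\mathcal{S}_{(S,j)}$ with $\sum_m V^m \le f_j(S) + \delta$. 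Evaluating the simple part at level $j+1$ and absorbing the nonnegative tail terms, one gets the one-step bound with $H_j(S) \equiv H^0_j(\tilde S)$ (this is a constant on the one-step fiber by non-anticipativity) up to the contribution of $\sum_{m\ge1}\Pi^{V^m,H^m}_{j,j+1}$, which is $\ge 0$; the subtlety is that I only need the inequality $f_{j+1}\le f_j(S) + H_j(S)\Delta_j\tilde S + \delta + (\text{nonneg. terms that sum to }\sum_{m\ge1}V^m)$, and since $\sum_{m\ge1}V^m \le f_j(S)+\delta - V^0$... I need to track that $V^0$ is exactly the constant term of the simple portfolio. So more carefully: set $H_j \equiv H^0_j$ and note $\Pi^{V^m,H^m}_{j,j+1}(\tilde S) = V^m + H^m_j(\tilde S)\Delta_j\tilde S \ge 0$; defining $A_{j+1}-A_j$ at $(S,j)$ as the (non-anticipative, nonnegative a.e.) slack $f_j(S) + H_j\Delta_j\tilde S + \delta - f_{j+1}(\tilde S)$ — wait, this slack can be negative. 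The correct move is to choose $H_j$ so that $f_{j+1} - f_j - H_j\Delta_j S \le \delta_j$ a.e. on $\mathcal{S}_{(S,j)}$; the increment $A_{j+1}-A_j \equiv \delta_j - (f_{j+1} - f_j - H_j\Delta_j S) \ge 0$ then defines $A$. I would carry this out nodewise, glue the $H_j$ and $A_j$ into global non-anticipative functions (legitimate since each is defined consistently on the fibers $\mathcal{S}_{(S,j)}$), and collect the exceptional null sets.

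The main obstacle — and where I'd spend the most care — is handling the $\liminf_n \Pi^{V^m,H^m}_{j,n}$ tail terms in the definition of $\overline{\sigma}_j$ cleanly. Evaluating a $\liminf$ of elementary functions at the finite time $j+1$ is not the same as the $\liminf$ itself, so the bound $\Pi^{V^m,H^m}_{j,j+1}\ge 0$ (which holds by admissibility, $\Pi^{V^m,H^m}_{j,n}\in\mathcal{E}^+_{(S,j)}$ for all $n\ge j$, in particular $n=j+1$) is what saves the argument, but I must make sure the one-step inequality $f_{j+1}(\tilde S) \le \sum_{m\ge0}\Pi^{V^m,H^m}_{j,j+1}(\tilde S)$ genuinely follows from $f_{j+1}(\tilde S)\le \Pi^{V^0,H^0}_{j,n_0}(\tilde S) + \sum_{m\ge1}\liminf_n\Pi^{V^m,H^m}_{j,n}(\tilde S)$ — it does not in general, since $\liminf_n \Pi_{j,n}$ could be strictly smaller than $\Pi_{j,j+1}$. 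The resolution is that I don't need a pointwise one-step bound for all of $f_{j+1}$; I only need that, after subtracting $f_j(S) + H_j\Delta_j\tilde S$, the result is $\le \delta_j$ up to a conditionally null set, and I get this by the same $\overline{I}_j$-null-set bookkeeping that underlies Proposition \ref{Properties_L} — i.e. the exceptional set where the $\liminf$ tail fails to be dominated by its $n=j+1$ value is exactly of the type that $\overline{I}_j$ annihilates. I would therefore route the one-step argument through $\overline{\sigma}_j$'s known properties (in particular that under $(L_{(S,j)})$, $\overline{\sigma}_j$ is subadditive with $\overline{\sigma}_j$ of an elementary function equal to its initial cost, Proposition \ref{Properties_L}), rather than manipulating the infinite decomposition by hand, and reserve the delicate $\liminf$ estimate for a short lemma. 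Finally, I must verify the exceptional null set $N_f$ can be taken independent of $(\delta_j)$: this follows because the only nodes I must exclude are those in $\mathcal{N}^{(L)}$ together with the (fixed, $\delta$-independent) nodes where the supermartingale inequality $\overline{\sigma}_j f_{j+1}\le f_j$ fails, both null by Assumption $(L)$-a.e. and Definition \ref{superMartingale}; the construction of $H_j$, $A_j$ at the remaining nodes works for every $\delta_j>0$.
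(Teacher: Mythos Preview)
Your plan for (ii) $\Rightarrow$ (i) has a slip: from $\overline{\sigma}_j(-A_{j+1})\le -A_j$ you only obtain $\overline{\sigma}_j f_{j+1}\le f_j+\delta_j$, not $\le f_j$; the extra term $-(A_{j+1}-A_j)$ in your chain is unjustified since $A_{j+1}$ is not constant on $\mathcal{S}_{(S,j)}$. The paper fixes this by applying (ii) with $\delta_i=1/K$ for each $K$ and letting $K\to\infty$. You also need to handle that the representation holds only off the null set $N_f$: the paper bounds the error by $\overline I_j((f_{j+1}-g_{j+1})_+)(S)\le \sum_m \overline I_j({\bf 1}_{N_f})(S)$ and uses Proposition~\ref{requiredProperties}(e) to pass from $\overline I$-nullity of $N_f$ to a.e.\ $\overline I_j$-nullity. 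These are repairable, but your write-up glosses over both.

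The real gap is in (i) $\Rightarrow$ (ii). You correctly identify the obstacle: from $f_{j+1}\le \Pi^{V^0,H^0}_{j,n_0}+\sum_{m\ge 1}\liminf_n \Pi^{V^m,H^m}_{j,n}$ one cannot simply read off a one-step bound, because $\liminf_n \Pi^{V^m,H^m}_{j,n}$ need not be controlled by $\Pi^{V^m,H^m}_{j,j+1}$. But your proposed resolutions do not work. Taking $H_j\equiv H^0_j$ is wrong: even granting the one-step inequality $f_{j+1}(\tilde S)\le \sum_{m\ge 0}(V^m+H^m_j\Delta_j\tilde S)$, the contribution $\sum_{m\ge 1}H^m_j\Delta_j\tilde S$ has no sign, so you cannot drop it to land at $f_j(S)+\delta_j+H^0_j\Delta_j\tilde S$. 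And your fallback claim, that ``the exceptional set where the $\liminf$ tail fails to be dominated by its $n=j+1$ value is exactly of the type that $\overline I_j$ annihilates,'' is not true in general and is not how the argument proceeds.

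The paper resolves both issues with two dedicated lemmas. The \emph{Finite Maturity Lemma} (Lemma~\ref{finiteMaturitySuperhedging}) uses property $(L_{(S,j+1)})$ at the \emph{next} node (not $(L_{(S,j)})$) and the fact that $f_{j+1}$ has maturity $j+1$ to deduce the genuine one-step inequality $f_{j+1}(S)\le \sum_{m\ge 0}(V^m+H^m_j\Delta_jS)$. The \emph{Aggregation Lemma} (Lemma~\ref{convergenceOfPortfolioCoordinates}) shows that at up-down nodes the series $\sum_{m\ge 0}H^m_j$ converges in $\mathbb{R}$, so one may set $H_j\equiv\sum_{m\ge 0}H^m_j$; at non-up-down nodes one sets $H_j\equiv 0$. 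Combining the two lemmas yields $f_{j+1}(S)-f_j(S)\le \delta_j+H_j(S)\Delta_jS$ whenever $\tau^\#(S)\ge j+2$, where $\tau^\#$ is a stopping time recording the first failure of $(L_{(S,k)})$, of the supermartingale inequality, or a move away from a type~I arbitrage node. The null set $N_f$ must therefore include $\mathcal{N}^{(I)}$ (type~I arbitrage nodes with $S_{j+1}\neq S_j$) in addition to $\mathcal{N}^{(L)}$ and the supermartingale-failure set; you omitted $\mathcal{N}^{(I)}$, which is needed precisely so that the Aggregation Lemma applies along the trajectory.
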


\begin{remark}
	Theorem \ref{thm:decomposition} shows that up to the small $\delta$-errors and null sets, supermartingales can be decomposed into a difference of martingale (of the special form as in Example \ref{ex:supermart}-(a)) and a non-anticipative, nondecreasing sequence.  	
	
	We will illustrate the supermartingale decomposition theorem, its assumptions, and its applicability beyond the classical probabilistic setting in a series of examples in Section \ref{sec:(L)}.  
\end{remark}


The proof of Theorem \ref{thm:decomposition} relies on two lemmas, which we call Finite Maturity Lemma and Aggregation Lemma.

\begin{lemma}[{\bf  Finite Maturity}] \label{finiteMaturitySuperhedging}
	Suppose $f: \mathcal{S} \rightarrow \mathbb{R}$ has maturity $n_f$ for some $n_f\in \mathbb{N}$. Let $j \leq n_f$ and $S^{\ast} \in \mathcal{S}$ be such that the property $(L_{(S^{\ast}, n_f)})$ holds.
	
	If $f_m=\liminf_{n\to\infty}~\Pi_{j,n}^{V^m,H^m},~~\Pi_{j,n}^{V^m,H^m} \in \mathcal{E}^+_{(S^{\ast}, j)}\; m\ge 1,~\mbox{and}~~
	f_0=~\Pi_{j,n_0}^{V^0,H^0} \in \mathcal{E}_{(S^{\ast}, j)}$  satisfy
	\begin{equation} \label{earlierCovering}
		f\le \sum\limits_{m=0}^\infty f_m,\;\;~~\mbox{on}~~\mathcal{S}_{(S^{\ast},j)}.~~
	\end{equation}
	Then
	\begin{equation}\label{toProve}
		f \le \sum_{m=0}^\infty \Pi_{j,n_f}^{V^m,H^m}\;\;\mbox{on}~~ \mathcal{S}_{(S^{\ast}, n_f)}~~ (\mbox{each side is constant on} ~~ \mathcal{S}_{(S^{\ast}, n_f)}).
	\end{equation}
\end{lemma}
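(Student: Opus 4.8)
The plan is to prove the statement by induction over the maturity levels, moving the covering inequality forward one time step at a time from level $j$ up to level $n_f$. The key observation is that for $j < k \le n_f$, property $(L_{(S^\ast, n_f)})$ yields (via a suitable application of Proposition~\ref{Properties_L}, or by the way $(L)$-type properties localize at descendant nodes) the validity of $(L_{(\tilde S, k)})$ for every $\tilde S \in \mathcal{S}_{(S^\ast, j)}$ — more precisely, for every node of depth $k \le n_f$ lying in the subtree rooted at $(S^\ast, j)$. So it suffices to show: if $f \le \sum_{m\ge 0} g_m$ on $\mathcal{S}_{(\tilde S, k)}$, where $g_0 = \Pi^{V^0, H^0}_{k, n_0} \in \mathcal{E}_{(\tilde S, k)}$ and $g_m = \liminf_n \Pi^{V^m, H^m}_{k,n}$ with $\Pi^{V^m,H^m}_{k,n}\in\mathcal{E}^+_{(\tilde S,k)}$ for $m\ge 1$, and if $k < n_f$, then
\[
f \le \sum_{m\ge 0} \Pi^{V^m, H^m}_{k, k+1} \quad\text{on } \mathcal{S}_{(\hat S, k+1)}
\]
for every $\hat S \in \mathcal{S}_{(\tilde S, k)}$ — where one re-interprets the portfolio positions starting from time $k+1$ with the constants $\Pi^{V^m,H^m}_{k,k+1}(\hat S)$ as the new initial endowments. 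Iterating this $n_f - j$ times transforms \eqref{earlierCovering} into \eqref{toProve}; along the way the functions $\Pi^{V^m,H^m}_{k,n_f}$ are constant on $\mathcal{S}_{(S^\ast,n_f)}$ by non-anticipativity, and $f$ is constant there because it has maturity $n_f$.

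The core single-step claim is itself proved by applying property $(L_{(\tilde S,k)})$ cleverly, or rather a localized consequence of $(L_{(S^\ast,n_f)})$. Fix $\hat S \in \mathcal{S}_{(\tilde S, k)}$ and consider the node $(\hat S, k+1)$. On $\mathcal{S}_{(\hat S, k+1)}$ we have $f \le \sum_m g_m$ still; decompose each $g_m = \Pi^{V^m,H^m}_{k,k+1} + (\text{tail starting at } k+1)$, where the tail of $g_0$ is $\Pi^{V^0,H^0}_{k+1, n_0}$ (an elementary function over $\mathcal{S}_{(\hat S, k+1)}$) and the tail of $g_m$ ($m\ge 1$) is $\liminf_n \Pi^{V^m,H^m}_{k+1, n}$, which is still a $\liminf$ of nonnegative elementary functions over $\mathcal{S}_{(\hat S, k+1)}$ because $\Pi^{V^m,H^m}_{k,n} \ge 0$ implies $\Pi^{V^m,H^m}_{k+1,n} = \Pi^{V^m,H^m}_{k,n} - H^m_k \Delta_k \cdot \ge$ ... — here is the subtlety: the nonnegativity of $\Pi^{V^m,H^m}_{k,n}$ on $\mathcal{S}_{(\tilde S,k)}$ does \emph{not} immediately give nonnegativity of the tail $\Pi^{V^m,H^m}_{k+1,n}$ on $\mathcal{S}_{(\hat S, k+1)}$. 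This is precisely where property $(L_{(\hat S, k)})$ (a consequence of $(L_{(S^\ast, n_f)})$ for descendant nodes) must be invoked, since $(L)$ is equivalent to $\overline\sigma_k 0 = 0$ and hence controls how constants propagate; the point is that the \emph{new} initial endowment to attach at time $k+1$ is $\Pi^{V^m,H^m}_{k,k+1}(\hat S)$, and $\Pi^{V^m,H^m}_{k,n} = \Pi^{V^m,H^m}_{k,k+1} + \Pi^{*,H^m}_{k+1,n}$ where $\Pi^{*,H^m}_{k+1,n}$ denotes the same portfolio restarted from endowment $\Pi^{V^m,H^m}_{k,k+1}(\hat S)$ at time $k+1$, which is $\ge 0$ on $\mathcal{S}_{(\hat S, k+1)}$ by construction. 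Thus the tails, with these re-based endowments, are legitimate building blocks, and $f \le \sum_m (\text{re-based tail})_m$ on $\mathcal{S}_{(\hat S, k+1)}$, which is exactly the covering inequality at level $k+1$ with initial endowments $\Pi^{V^m,H^m}_{k,k+1}(\hat S)$.

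Putting it together: start from \eqref{earlierCovering} at level $j$; apply the single-step claim successively at levels $j, j+1, \dots, n_f - 1$, each time along the relevant descendant node, obtaining a covering inequality at level $n_f$ of the form $f \le \sum_{m\ge 0} \Pi^{V^m, H^m}_{j, n_f}$ on $\mathcal{S}_{(S^\ast, n_f)}$ (the endowments telescope back to $V^m$ plus the accumulated increments, which is exactly $\Pi^{V^m,H^m}_{j,n_f}$). Finally, both sides of \eqref{toProve} are constant on $\mathcal{S}_{(S^\ast, n_f)}$: the left side since $f$ has maturity $n_f$, the right side since each $\Pi^{V^m,H^m}_{j,n_f}$ depends only on $S_0, \dots, S_{n_f}$ by non-anticipativity. \textbf{The main obstacle} I anticipate is the bookkeeping in the single-step claim — specifically, verifying that re-basing the portfolio tails at the new initial endowment preserves nonnegativity on the relevant conditional subspace, and confirming that $(L_{(S^\ast, n_f)})$ indeed implies the needed $(L)$ at all descendant nodes of depth $\le n_f$ (rather than only at $(S^\ast, n_f)$ itself). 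If that implication is not available directly from the earlier results, one may instead need to argue more carefully that $(L_{(S^\ast,n_f)})$ alone suffices because the $\liminf$ in the tails, when frozen at time $n_f$, collapses to genuine elementary sums, so only the single instance $(L_{(S^\ast,n_f)})$ is actually used at the last step while the intermediate steps are pure algebraic identities not requiring $(L)$ at all.
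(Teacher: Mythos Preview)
Your primary inductive plan contains an unjustified claim: you assert that $(L_{(S^\ast,n_f)})$ implies $(L_{(\tilde S,k)})$ for all intermediate $k$ with $j\le k\le n_f$ (calling these ``descendant'' nodes, though in fact they are ancestors of $(S^\ast,n_f)$). Nothing in the paper supports this implication, and indeed it can fail --- $(L)$ is a node-by-node property, and $(L_{(S^\ast,n_f)})$ says nothing about $(L_{(S^\ast,k)})$ for $k<n_f$. So the version of your single-step claim that relies on $(L)$ at each intermediate level is not available.

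However, the fallback you sketch in your final sentence is exactly right and is what the paper does. The re-basing from level $j$ to any higher level is pure algebra: on $\mathcal{S}_{(S^\ast,n_f)}$ one has $\Pi^{V^m,H^m}_{j,n}=\Pi^{U^m,H^m}_{n_f,n}$ with $U^m=\Pi^{V^m,H^m}_{j,n_f}(S^\ast)$, and these re-based portfolios remain in $\mathcal{E}^+_{(S^\ast,n_f)}$ for $m\ge1$ simply because they equal the original nonnegative ones pointwise. There is no need to induct through $k=j,j+1,\ldots$; jump straight to $n_f$. The covering inequality \eqref{earlierCovering} restricted to $\mathcal{S}_{(S^\ast,n_f)}$ then reads $0\le -f(S^\ast)+g_0+\sum_{m\ge1}g_m$ with $-f(S^\ast)+g_0\in\mathcal{E}_{(S^\ast,n_f)}$ having initial endowment $-f(S^\ast)+U^0$, and a single application of $(L_{(S^\ast,n_f)})$ yields $0\le -f(S^\ast)+\sum_{m\ge0}U^m$, which is \eqref{toProve}. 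The paper's proof is precisely this direct one-step argument; your induction is unnecessary scaffolding, and once removed, your proposal coincides with the paper's proof.
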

\begin{proof}
	Define for each $\tilde{S} \in \mathcal{S}_{(S^{\ast}, n_f)}$,
	\begin{equation} \nonumber
		U^0(S^{\ast}) \equiv -f(S^{\ast})+\Pi_{j,n_f \wedge n_0}^{V^0,H^0}(S^{\ast}) = -f(S^{\ast})+\Pi_{j,n_f}^{V^0,H^0}(S^{\ast}), ~~
	\end{equation}
	\begin{equation} \nonumber
		g_0(\tilde{S}) \equiv U^0(S^{\ast})+ \sum_{i= n_f \wedge n_0}^{n_0-1} H_i^0(\tilde{S}) \Delta_i \tilde{S}= U^0(S^{\ast})+ \sum_{i= n_f}^{n_0-1} H_i^0(\tilde{S}) \Delta_i \tilde{S},
	\end{equation}
	where we have used the fact that $H_i^0=0$ for $i \geq n_0$,
	and for $m \geq 1$
	\begin{equation} \nonumber
		U^m(S^{\ast}) \equiv \Pi_{j,n_f}^{V^m,H^m}(S^{\ast}), \qquad
		g_m(\tilde{S}) \equiv U^m(S^{\ast})+ \liminf_{n \rightarrow \infty} \sum_{i= n_f }^{n-1} H_i^m(\tilde{S}) \Delta_i \tilde{S}.
	\end{equation}
	
	\noindent
	It follows that
	\[
	\Pi_{n_f,n}^{U^0(S^{\ast}),H^0}=-f(S^*)+\Pi_{j,n}^{V^0,H^0} \in \mathcal{E}_{(S^*,n_f)}~\mbox{for any}~  n \geq n_f,
	\]
	and for $m \geq 1$
	\[
	\Pi_{n_f,n}^{U^m(S^{\ast}),H^m}=\Pi_{j,n}^{V^m,H^m} \in \mathcal{E}^+_{(S^*,n_f)}~\mbox{for any}~  n \geq n_f,
	\]
	
	Notice that (\ref{earlierCovering}) implies
	$0 \leq \sum\limits_{m \geq 0} g_m$ holds on $\mathcal{S}_{(S^{\ast}, n_f)}$
	and since property $(L_{(S^*,n_f)})$ holds,
	\[
	0\le \sum\limits_{m=0}^\infty U^m(S^{\ast}) = -f(S^*)+\sum\limits_{m=0}^\infty\Pi_{j,n_f}^{V^m,H^m}(S^*),
	\]
	from where (\ref{toProve}) holds.
\end{proof}

The following aggregation lemma is proved in \cite{ferrando} (Lemma 3) under the assumption that all nodes are up-down nodes. The proof there can easily be adapted to our more general setting (which allows for any type of node).
\begin{lemma}[{\bf Aggregation Lemma}] \label{convergenceOfPortfolioCoordinates}
	For $j \ge 0$ fixed let, for $m\ge 1$  $~~H^m=(H_i^m)_{i\ge j}$, be sequences of non-anticipative functions on $\Se$, and $V^m$ functions defined on $\Se$, depending for each $S$ only on $S_0,...,S_j$.
	
	Fix a node $(S,j)$ and  assume for any $m\ge 1$, $n\geq j$, and  $\tilde{S}\in\Se_{(S,j)}$ that:
	\[
	\Pi_{j,n}^{V^m, H^m}(\tilde{S})=V^m(S)+\sum\limits_{i=j}^{n-1}H_i^m(\tilde{S})\Delta_i\tilde{S}\ge 0,~~\; \quad \mbox{and}\quad\sum\limits_{m\ge 1}V^m(S) <\infty.
	\]
	For every $\hat{S}\in \Se_{(S,j)}$ and $k\geq j$ the following holds: If  $(\hat{S}, k)$ is an up-down node and, for every $j\leq p<k$,  $(\hat{S}, p)$ is an up-down node or $\hat S_{p+1}=\hat S_p$, then,
	\begin{equation} \label{convergentCase}
		\sum_{m\ge 1} H_k^m(\hat{S})~\mbox{converges in $\mathbb{R}$}.
	\end{equation}
\end{lemma}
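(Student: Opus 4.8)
The goal is to show that if all the partial sums $\Pi_{j,n}^{V^m,H^m}$ are nonnegative on $\Se_{(S,j)}$ and the initial endowments sum to a finite value, then at every up-down node $(\hat S,k)$ reached through nodes that are either up-down or ``truly flat'' (i.e.\ $\hat S_{p+1}=\hat S_p$), the series of stock positions $\sum_{m\ge 1}H_k^m(\hat S)$ converges in $\mathbb{R}$. The plan is to follow the argument of Lemma~3 in \cite{ferrando}, adapting it to accommodate flat nodes along the path from $j$ to $k$. First I would reduce to the case $k=j$ by conditioning: the hypotheses on $\Pi_{j,n}^{V^m,H^m}\ge 0$ on $\Se_{(S,j)}$ restrict, on the sub-cone $\Se_{(\hat S,k)}$, to portfolios started at time $k$ with new (finite, by the telescoping identity $\Pi_{j,n}^{V^m,H^m}(\tilde S)=\Pi_{j,k}^{V^m,H^m}(\tilde S)+\sum_{i=k}^{n-1}H^m_i(\tilde S)\Delta_i\tilde S$) endowments $W^m(\hat S):=\Pi_{j,k}^{V^m,H^m}(\hat S)\ge 0$. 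The crucial point enabling this reduction is that $\sum_{m\ge1}W^m(\hat S)<\infty$: this is precisely where the hypothesis that every intermediate node $(\hat S,p)$ with $j\le p<k$ is up-down or satisfies $\hat S_{p+1}=\hat S_p$ is used, by an inductive application of the $k=j$ case of the lemma (or of the one-step bound below) to control the growth of the partial endowments along the path.

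Second, at the node $(\hat S,k)$, which is up-down, I would exploit the defining inequalities. Since $(\hat S,k)$ is up-down, there exist $\tilde S^{+},\tilde S^{-}\in\Se_{(\hat S,k)}$ with $a:=\tilde S^{+}_{k+1}-\hat S_k>0$ and $b:=\hat S_k-\tilde S^{-}_{k+1}>0$. Evaluating the nonnegativity $\Pi_{k,k+1}^{W^m,H^m}(\tilde S^{\pm})\ge 0$ — which follows from the reduced form of the original hypothesis — gives, for each $m$,
\[
W^m(\hat S)+a\,H_k^m(\hat S)\ge 0\qquad\text{and}\qquad W^m(\hat S)-b\,H_k^m(\hat S)\ge 0,
\]
hence $-\tfrac{1}{a}W^m(\hat S)\le H_k^m(\hat S)\le \tfrac{1}{b}W^m(\hat S)$, so $|H_k^m(\hat S)|\le \max\{1/a,1/b\}\,W^m(\hat S)$. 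Summing over $m$ and using $\sum_{m\ge1}W^m(\hat S)<\infty$ yields absolute convergence of $\sum_{m\ge1}H_k^m(\hat S)$ in $\mathbb{R}$, which is the claim.

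Third, I would make the induction rigorous. Set $W^{m,p}(\hat S):=\Pi_{j,p}^{V^m,H^m}(\hat S)$ for $j\le p\le k$; then $W^{m,j}=V^m$ and $\sum_m W^{m,j}(S)<\infty$ by hypothesis. Assuming $\sum_m W^{m,p}(\hat S)<\infty$ and that $(\hat S,p)$ is up-down or $\hat S_{p+1}=\hat S_p$, I want $\sum_m W^{m,p+1}(\hat S)<\infty$, where $W^{m,p+1}(\hat S)=W^{m,p}(\hat S)+H_p^m(\hat S)\,\Delta_p\hat S$. If $\hat S_{p+1}=\hat S_p$ then $\Delta_p\hat S=0$ and $W^{m,p+1}=W^{m,p}$, so there is nothing to prove. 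If $(\hat S,p)$ is up-down, the one-step estimate from the previous paragraph (applied with $k$ replaced by $p$, which only needs that $(\hat S,p)$ is up-down and $\sum_m W^{m,p}(\hat S)<\infty$) gives $|H_p^m(\hat S)|\le C_p\,W^{m,p}(\hat S)$ with $C_p=\max\{1/a_p,1/b_p\}$ from the up-down constants at $(\hat S,p)$, whence $|W^{m,p+1}(\hat S)|\le (1+C_p|\Delta_p\hat S|)W^{m,p}(\hat S)$, and summability is preserved. Iterating from $p=j$ to $p=k-1$ yields $\sum_m W^{m,k}(\hat S)<\infty$, which is exactly the finiteness of $\sum_m W^m(\hat S)$ needed in the second step.

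\textbf{Main obstacle.} The only real subtlety, and the reason this is stated as a lemma rather than a one-line remark, is handling the intermediate flat nodes correctly: one must verify that passing through a node with $\hat S_{p+1}=\hat S_p$ neither breaks the summability bookkeeping (it does not, since $\Delta_p\hat S=0$) nor is it secretly an arbitrage node of a forbidden kind — the hypothesis explicitly excludes intermediate nodes that are arbitrage nodes (other than the harmless $\hat S_{p+1}=\hat S_p$ degenerate case), and this exclusion is exactly what is needed so that either $\Delta_p\hat S=0$ or the up-down bound $|H_p^m|\le C_p W^{m,p}$ applies. The original proof in \cite{ferrando} did not need to track this because all nodes there were assumed up-down; the adaptation amounts to inserting the trivial ``$\Delta_p\hat S=0$'' branch into the induction, so no genuinely new idea is required.
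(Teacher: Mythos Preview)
Your proposal is correct and matches the paper's approach: the paper does not give a self-contained proof but refers to Lemma~3 in \cite{ferrando} (proved under the assumption that all nodes are up-down) and states that the argument ``can easily be adapted'' to the present setting. Your adaptation---propagating summability of $\sum_m W^{m,p}(\hat S)$ along the path by the up-down one-step bound $|H_p^m|\le C_p W^{m,p}$ and inserting the trivial $\Delta_p\hat S=0$ branch at intermediate flat steps---is precisely the easy modification the paper has in mind.
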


\begin{proof}[Proof of Theorem \ref{thm:decomposition}] 
	Let
	\begin{eqnarray*}
		\mathcal{N}^{(I)} &=& \{S\in\Se: \exists j\ge 0 \; s.t. \; (S,j)\; \mbox{is a type I arbitrage node and}\; S_{j+1}\neq S_j\},
	 \\ \mathcal{N}^{(II)} &=& \{S\in\Se: \exists j\ge 0 \; s.t. \; (S,j)\; \mbox{is a type II arbitrage node}\},
	\end{eqnarray*}
	and recall that $\mathcal{N}^{(L)}$, defined in \eqref{eq:N^L}, is a null set by assumption. Note that $\mathcal{N}^{(I)}$ and $ \mathcal{N}^{(II)}$ are null sets by Lemma \ref{nullityOfN_j} and that $ \mathcal{N}^{(II)}\subset\mathcal{N}^{(L)}$ by Lemma \ref{priceMinusInfinity}.
	\\[0.2cm]
	(i) $\Rightarrow$ (ii): By the supermartingale property of $(f_j)_{j\geq 0}$ we may fix  an $\overline{I}$-null set $\mathcal{N}_f$ such that 
	$\overline{\sigma}_j f_{j+1}(S) \leq f_j(S)$ for every $S \in \Se \setminus \mathcal{N}_f$ and $j\ge0$. Let $N_f \equiv \mathcal{N}^{(I)}\cup\mathcal{N}^{(L)}\cup\mathcal{N}_f$. We first introduce the stopping time
	\[
	\tau^{\#}(S)= \inf \{k\ge 0: L_{(S,k)}\; \mbox{fails, or}\; \overline{\sigma}_k f_{k+1}(S) > f_k(S),\; \mbox{or}\]
	\[[(S,k-1)\; \mbox{is a type I arbitrage node and}\; S_k\neq S_{k-1}]\},
	\]
	with the convention $\inf \emptyset = +\infty$. Note that $\tau^{\#}(S) = \infty$, if $S\notin N_f$.
	
	Now we fix some $j\ge 0$ and choose a family $\{S^{\lambda}\}_{\lambda\in \Lambda_j}$ for some index set $\Lambda_j$ so that $\{\Se_{(S^{\lambda},j)}:\lambda\in \Lambda_j\}$ is a partition of $\Se$ (see Definition \ref{indexSet} in appendix \ref{Property L}).
	
	We construct now a function $H_j:\Se \rightarrow \mathbb{R}$ constant on the nodes of the partition (and, thus, non-anticipating) in the following way.
	Consider an arbitrary but fixed node $(S^{\lambda},j)$ of the partition:
	
	If $\tau^{\#}(S^\lambda)\le j$, then $\Se_{(S^\lambda,j)}\subset N_f$ and we simply let $H_j(S)=0$ for any $S\in\Se_{(S^\lambda,j)}$.
	
	If $\tau^{\#}(S^\lambda)\ge j+1$ note that, in particular, $(L_{(S^{\lambda},j)})$ holds and $\overline{\sigma}_j f_{j+1}(S^\lambda)\le f_j(S^\lambda) \in \mathbb{R}$. Applying the defnition of $\overline{\sigma}_j$, we  find $g_m$'s such that
	$$
f_{j+1} \leq g_m \mbox{ on } \Se_{(S^{\lambda},j)},
	$$
	where $g_m=\liminf_{n\to\infty}~\Pi_{j,n}^{V^m,H^m},~~\Pi_{j,n}^{V^m,H^m} \in \mathcal{E}^+_{(S^{\lambda}, j)}\; m\ge 1,~\mbox{and}~~
	g_0=~\Pi_{j,n_0}^{V^0,H^0} \in \mathcal{E}_{(S^{\lambda}, j)}$  and
\begin{equation}\label{eq:0002}
		\sum_{m=0}^{\infty}V_m \le f_j(S^{\lambda})+\delta_j.
\end{equation}
Notice that $H^m_j$ is constant on $\Se_{(S^{\lambda},j)}$, and we write $h_m$ for this constant value. 
If $(S^{\lambda}, j)$ is an up-down node, then $\sum_{m=0}^{\infty}h_m$ converges in $\mathbb{R}$ by the Aggregation Lemma~\ref{convergenceOfPortfolioCoordinates}. We now define $H_j(S) \equiv H_j(S^{\lambda})$ for $S\in\Se_{(S^{\lambda},j)}$ in the following way:
\begin{itemize}
	\item If $(S^{\lambda}, j)$ is an up-down node, then $H_j(S^{\lambda})\equiv\sum_{m=0}^{\infty}h_m$,
	\vspace{.05in}
	\item Otherwise $H_j(S^{\lambda})\equiv 0$.
\end{itemize}
If $S\in (S^{\lambda}, j)$ satisfies $(L_{(S,j+1)})$, then, by the Finite Maturity Lemma \ref{finiteMaturitySuperhedging} with $n_{f_{j+1}}=j+1$,
	\begin{equation}\label{eq:0003}
	f_{j+1}(S) \le \sum_{m=0}^{\infty}V_m + h_m\Delta_jS.
	\end{equation}
{\it Case A:}
	 $S \in (S^{\lambda}, j)$ satisfies $(L_{(S,j+1)})$ and [$(S^{\lambda},j)$ is an up-down or $S_{j+1}=S_j$]. 
	 
	 Then, by \eqref{eq:0002} and \eqref{eq:0003},
	\begin{equation} \nonumber 
		\begin{array}{lll}
			f_j(S)+\delta_j &=& f_j(S^{\lambda})+\delta_j \ge \sum\limits_{m=0}^{\infty}V_m\\ \\
			&\ge & f_{j+1}(S) - \sum\limits_{m=0}^{\infty} h_m\Delta_jS = f_{j+1}(S) - H_j(S)\Delta_jS.
		\end{array}
	\end{equation}
{\it Case B:}	For  $S\in\Se_{(S^{\lambda},j)}$, $(L_{(S,j+1)})$ fails or [$(S^{\lambda},j)$ is not an up-down and $S_{j+1}\neq S_j$]. 
Then, $\tau^{\#}(S)= j+1$.

\noindent
\vspace{.1in}		
Consequently, for every $S\in\Se$ such that $\tau^{\#}(S)\ge j+2$, and, hence in particular for every $S \notin N_f$, we are in Case A and
	\begin{equation}\label{eq:increment}
		f_{j+1}(S) - f_j(S) \le \delta_j + H_j(S)\Delta_jS.
	\end{equation}

	We now define, for $S\in\Se\setminus N_f$
	\[
	\alpha_j(S)\equiv \delta_j + H_j(S)\Delta_jS -(f_{j+1}(S) - f_j(S))\ge 0;
	\]
	and set $\alpha_j(S)\equiv 0$ for for $S\in N_f$. Then for $S\in\Se\setminus N_f$
	\[
	f_{j+1}(S) - f_j(S) = \delta_j + H_j(S)\Delta_jS -\alpha_j(S).
	\]
	Summing this identity over $j$, yields, for every $i\ge 0$ and $S\in\Se\setminus N_f$,
	\[
	f_i(S) = f_0 + \sum\limits_{j=0}^{i-1}H_j(S)\Delta_jS -A_i(S)+ \sum\limits_{j=0}^{i-1}\delta_j,
	\]
	with $A_i(S)\equiv \sum\limits_{j=0}^{i-1}\alpha_j(S)$.
	
	\vspace{.2cm}
	$(ii) \Rightarrow (i)$: As $N_f$ is an $\bar I$-null set, we may deduce from
	Proposition \ref{requiredProperties}--e), that $\bar I_j {\bf 1}_{N_f}=0$ $\bar I$-a.e. for every $j\ge 0$. Hence, there is an $\bar I$-null set $\mathcal{N}_f$ such that $\bar I_j {\bf 1}_{N_f}(S)=0$ for every $S\in \mathcal{S}\setminus \mathcal{N}_f$ and $j\geq 0$. 
	
	We now fix some $j\geq 0$ and some positive integer $K$ and let $\delta_i=1/K$. Condition (ii) implies
	$$
	f_{j+1}(S)\leq f_j(S)+ 1/K +  H_j( S)(S_{j+1}-S_j)
	$$
	for every $S\in \mathcal{S}\setminus {N}_f$. We define
	$$
	g_{j+1}(S)=  f_j(S)+ 1/K +  H_j( S)(S_{j+1}-S_j)
	$$
	for every $S\in \mathcal{S}$. Then, $g_{j+1}\in\mathcal{E}_j$ and $\overline\sigma_jg_{j+1}(S)\leq f_j(S)+1/K$ for every $S\in \mathcal{S}$.
	Note that $(f_{j+1}-g_{j+1})_+$ can only be strictly positive on the $\bar I$-null set $N_f$. Consequently,
	\begin{eqnarray*}
		\overline\sigma_j f_{j+1}(S) &\leq& \overline\sigma_jg_{j+1}(S)+ \bar I_j((f_{j+1}-g_{j+1})_+)(S) \\ &\leq& f_j(S)+1/K + \sum_{m=1}^\infty \bar I_j( {\bf 1}_{{N}_f} )(S)= f_j(S)+1/K
	\end{eqnarray*}
	for every $S\in \mathcal{S}\setminus\mathcal{N}_f $. Passing to the limit $K\rightarrow \infty$, yields (i).
\end{proof}

\section{Doob's Pointwise Supermartingale Convergence} \label{sec:doobConvergence}

This section proves our version of Doob's pointwise convergence theorem for nonnegative supermartingales. 

\begin{theorem}[{\bf Supermartingale convergence}] \label{doobConvergence}
	Suppose $(L)$-a.e. holds. Let $(f_i)_{i \ge 0}$ be a supermartingale with values in $[0,\infty)$ and impose the following assumption on the trajectory set:
	\begin{enumerate}
				\item[$(H.1)$] Whenever $(S,j)$ is an up-down node such that $(L_{(S,j)})$ holds and $(L_{(S,j+1)})$ fails, then there are $S^1, S^0\in \Se_{(S,j)}$ such that $S^1_{j+1}\geq S_{j+1}\geq S^0_{j+1}$ and $(L_{(S^\iota,j+1)})$ holds for $\iota=0,1$. 
	\end{enumerate}
		 Then, there exist a null set $\mathcal{N}_{div}$ such that $	\lim\limits_{i\to\infty} f_i(S)$ exists in $\mathbb{R}$ for every $S\in \Se\setminus\mathcal{N}_{div}$. 
	\end{theorem}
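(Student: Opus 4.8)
The plan is to reduce the convergence of the nonnegative supermartingale $(f_i)$ to the convergence of a bounded-below martingale transform of the coordinate process, via the supermartingale decomposition (Theorem \ref{thm:decomposition}), and then to invoke the convergence theorem for martingale transforms established in \cite{ferrando}.

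First I would apply Theorem \ref{thm:decomposition} with the summable choice $\delta_j = 2^{-j-1}$, so that $\Sigma_i := \sum_{j=0}^{i-1}\delta_j \uparrow 1$ as $i\to\infty$. This yields non-anticipative sequences $(H_j)_{j\ge 0}$ and $(A_j)_{j\ge 0}$, with $A_0 = 0$ and $(A_j)$ nondecreasing, together with an $\overline{I}$-null set $N_f$ such that, writing $M_i(S) := \sum_{j=0}^{i-1}H_j(S)\,\Delta_j S$,
\[
f_i(S) = f_0 + M_i(S) - A_i(S) + \Sigma_i, \qquad S\in\mathcal{S}\setminus N_f,\ i\ge 0 .
\]
Rearranging and using $f_i\ge 0$, $A_i\ge 0$ and $\Sigma_i\le 1$ gives $M_i(S) = f_i(S) - f_0 + A_i(S) - \Sigma_i \ge -f_0 - 1$ for every $S\in\mathcal{S}\setminus N_f$ and $i\ge 0$. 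Thus $(M_i)$, which is a martingale transform of the coordinate process (cf. Example \ref{ex:supermart}-(a)), is bounded below off the null set $N_f$.

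Second, I would apply the martingale-transform convergence theorem from \cite{ferrando}: a martingale transform that is bounded below (off a null set) converges in $\mathbb{R}$ off a null set. This is the step where hypothesis $(H.1)$ enters: the convergence result rests on an upcrossing estimate, and $(H.1)$ supplies, at every up-down node where $(L_{(S,j)})$ holds but $(L_{(S,j+1)})$ fails, the ``bracketing'' witnesses $S^0,S^1$ that keep the upcrossing hedging strategy available across such a singular node. Granting this, there is a null set $\mathcal{N}_1$ with $M_\infty(S):=\lim_{i\to\infty}M_i(S)\in\mathbb{R}$ for every $S\in\mathcal{S}\setminus\mathcal{N}_1$.

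Finally I would set $\mathcal{N}_{div}:=N_f\cup\mathcal{N}_1$, which is a null set by Proposition \ref{leinertTheorem}-(2), and conclude: for $S\notin\mathcal{N}_{div}$ we have $A_i(S) = f_0 + M_i(S) + \Sigma_i - f_i(S) \le f_0 + \sup_k M_k(S) + 1 < \infty$, so the nondecreasing sequence $(A_i(S))$ has a finite limit $A_\infty(S)\in[0,\infty)$, and therefore $f_i(S) = f_0 + M_i(S) - A_i(S) + \Sigma_i$ converges in $\mathbb{R}$. I expect the main obstacle to be the second step: matching the hypotheses of the cited martingale-transform convergence result to the object delivered by the decomposition, since $(M_i)$ is only controlled off $N_f$ and its portfolio was constructed nodewise in the proof of Theorem \ref{thm:decomposition}. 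Making this rigorous --- patching $H_j$ on $N_f$ into a genuinely bounded-below martingale transform, or verifying that boundedness below ``a.e.'' already suffices for the convergence result --- is precisely where $(H.1)$ does its work.
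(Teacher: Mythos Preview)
Your overall strategy matches the paper's exactly: apply Theorem~\ref{thm:decomposition} with summable $(\delta_j)$, invoke Theorem~2 of \cite{ferrando} for the martingale transform $M_i$, and then use monotonicity of $(A_i)$ to conclude. You have also correctly isolated the one real obstacle: Theorem~2 of \cite{ferrando} requires $\Pi_i^{V,H}(S)\ge 0$ for \emph{every} $S\in\mathcal{S}$ and every $i$, whereas the decomposition only gives you $M_i(S)\ge -f_0-1$ off the null set $N_f$.

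Where you go wrong is in the mechanism by which $(H.1)$ closes this gap. It does \emph{not} enter the convergence theorem of \cite{ferrando}; there is no upcrossing argument there that needs bracketing witnesses, and that result is stated and proved without any such hypothesis. Nor does ``a.e.\ bounded below'' suffice for it. Instead, $(H.1)$ is used \emph{before} invoking the convergence theorem, to show that the specific $(H_j)$ built in the proof of Theorem~\ref{thm:decomposition} already satisfies
\[
f_0+\sum_{j\ge 0}\delta_j+\sum_{j=0}^{i-1}H_j(S)\Delta_jS\;\ge\;0\qquad\text{for \emph{all} }S\in\mathcal{S},\ i\ge 0,
\]
with no modification required. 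This is the content of Lemma~\ref{lem:positive_martingale}. The argument is a case analysis on the stopping time $\tau^{\#}$ from the proof of Theorem~\ref{thm:decomposition}: recall that $H_j(S)=0$ once $j\ge\tau^{\#}(S)$ or the node $(S,j)$ is not up-down, while on ``good'' steps one has the increment bound $f_{j+1}-f_j\le\delta_j+H_j\Delta_jS$. Telescoping handles $i<\tau^{\#}(S)$ directly via $f_i\ge 0$. For $i\ge\tau^{\#}(S)$ the sum truncates at $\tau^{\#}(S)$, and the only delicate subcase is when $(S,\tau^{\#}(S)-1)$ is an up-down node but $(L_{(S,\tau^{\#}(S))})$ fails, so the increment bound is unavailable at $S$ itself for the last step. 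Here $(H.1)$ furnishes $S^0,S^1\in\mathcal{S}_{(S,\tau^{\#}(S)-1)}$ with $S^0_{\tau^{\#}(S)}\le S_{\tau^{\#}(S)}\le S^1_{\tau^{\#}(S)}$ at which $(L)$ does hold; comparing to $S^1$ or $S^0$ according to the sign of $H_{\tau^{\#}(S)-1}$ and using $f_{\tau^{\#}(S)}(S^\iota)\ge 0$ finishes the bound. So your ``patching'' intuition was in the right direction, but the point is that no patching is needed --- the construction already works globally once one inspects it with $(H.1)$ in hand.
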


The proof's strategy is to apply the supermartingale decomposition in Theorem~\ref{thm:decomposition} and to pass to the limit separately for the various terms. For the martingale part $\sum_{j=0}^{i-1}H_j(S)\Delta_jS$, we can make use of Theorem 2 in \cite{ferrando}. However, this result requires that  $\sum_{j=0}^{i-1}H_j(S)\Delta_jS$ is bounded from below by the same constant for every $S\in \mathcal{S}$, while Theorem \ref{thm:decomposition} in conjunction with the nonnegativity of $f$ only implies boundedness of
$\sum_{j=0}^{i-1}H_j(S)\Delta_jS$ from below for a.e.  $S\in \mathcal{S}$. In view of the following lemma, the required boundedness condition can be guaranteed under the additional assumption $(H.1)$. Useful sufficient conditions for $(H.1)$ to hold are presented in Section \ref{sec:(L)}, Corollary \ref{cor:L}.

\begin{lemma} \label{lem:positive_martingale}
Under the assumptions of Theorem \ref{doobConvergence}, fix a sequence $(\delta_j)_{j \geq 0}$ of summable positive reals and construct the supermartingale decomposition of $(f_j)$ as in the proof of the implication $(i) \Rightarrow (ii)$ in Theorem \ref{thm:decomposition}. Then, for every $S\in \mathcal{S}$ and $i\geq 0$,
$$
f_0+\sum_{j=0}^\infty \delta_j +\sum_{j=0}^{i-1}H_j(S)\Delta_jS\geq 0.
$$
\end{lemma}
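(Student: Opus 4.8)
The plan is to exploit the explicit structure of the decomposition constructed in the proof of $(i)\Rightarrow(ii)$ of Theorem \ref{thm:decomposition}, together with the nonnegativity of $f$, and to deal with the bad null set $N_f$ directly by analyzing how $H_j$ was defined there. Recall that on $N_f$ the construction sets $H_j(S)=0$ whenever $\tau^{\#}(S)\le j$, and otherwise $H_j$ is the (convergent) aggregate $\sum_m h_m$ at an up-down node and $0$ at any other node. The key identity we may use is that for $S\notin N_f$ one has, for all $i\ge 0$,
\[
f_i(S)=f_0+\sum_{j=0}^{i-1}H_j(S)\Delta_j S - A_i(S)+\sum_{j=0}^{i-1}\delta_j,
\]
with $A_i(S)\ge 0$ nondecreasing; since $f_i(S)\ge 0$ this immediately gives $f_0+\sum_{j=0}^{i-1}H_j(S)\Delta_j S\ge A_i(S)-\sum_{j=0}^{i-1}\delta_j\ge -\sum_{j=0}^{\infty}\delta_j$, which is even slightly stronger than claimed (the $f_0$ on the left can be absorbed since $f_0\ge 0$). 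So the statement is clear for $S\notin N_f$.

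The remaining work, and the main obstacle, is the case $S\in N_f$. Here the telescoped identity need not hold, so I would instead argue node-by-node. Fix $S\in N_f$ and let $k=\tau^{\#}(S)$, which is finite. For indices $j<k$ we are, by construction, in ``Case A'' of the proof of Theorem \ref{thm:decomposition} (every such $S$ satisfies $(L_{(S,j+1)})$ and the relevant node is up-down or $S_{j+1}=S_j$), so the increment inequality \eqref{eq:increment} holds: $f_{j+1}(S)-f_j(S)\le \delta_j+H_j(S)\Delta_j S$ for $0\le j<k$. Summing these for $j=0,\dots,i\wedge k-1$ and using $f_{i\wedge k}(S)\ge 0$ gives $f_0+\sum_{j=0}^{i\wedge k-1}H_j(S)\Delta_j S\ge f_{i\wedge k}(S)-\sum_{j<k}\delta_j\ge -\sum_{j\ge 0}\delta_j$. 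For the tail indices $j\ge k$ we have $\tau^{\#}(S)\le j$, hence $H_j(S)=0$ by construction, so $\sum_{j=k}^{i-1}H_j(S)\Delta_j S=0$ and the partial sum over $0\le j<i$ coincides with the partial sum over $0\le j< i\wedge k$. Combining, the desired inequality holds for $S\in N_f$ as well.

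Thus the only subtlety is making sure that for $S\in N_f$ all indices strictly below $\tau^{\#}(S)$ genuinely fall into Case A — this is exactly where the definition of $\tau^{\#}$ and the hypothesis $(H.1)$ enter: $(H.1)$ was what allowed the construction in Theorem \ref{thm:decomposition} to guarantee, at an up-down node where $(L_{(S,j)})$ holds but $(L_{(S,j+1)})$ may fail on part of the fiber, that the increment inequality still propagates along the relevant trajectories up to time $\tau^{\#}$. Once that bookkeeping is in place, the proof is just the two telescoping estimates above plus the observation $H_j(S)=0$ past $\tau^{\#}(S)$; no limit arguments or properties of $\overline\sigma_j$ beyond those already invoked in Theorem \ref{thm:decomposition} are needed.
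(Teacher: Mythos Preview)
Your argument has a genuine gap at the index $j=\tau^{\#}(S)-1$. You assert that for every $j<k=\tau^{\#}(S)$ one is automatically in Case A of the proof of Theorem \ref{thm:decomposition}, so that the increment inequality \eqref{eq:increment} holds. This is correct for $j\le k-2$, but fails in general at $j=k-1$: being in Case A at $j$ requires $(L_{(S,j+1)})$ to hold, and $\tau^{\#}(S)=j+1$ is triggered precisely when $(L_{(S,j+1)})$ fails (or when $(S,j)$ is a type I arbitrage node with $S_{j+1}\neq S_j$, or when the supermartingale inequality fails at $j+1$). So at $j=k-1$ the increment bound $f_{j+1}(S)-f_j(S)\le \delta_j+H_j(S)\Delta_jS$ is \emph{not} available for this particular $S$, and your telescoping argument breaks down exactly at its last term.

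This is where hypothesis $(H.1)$ actually enters, and it is not the way you describe. $(H.1)$ is \emph{not} used in Theorem \ref{thm:decomposition}; it is invoked for the first time here in Lemma \ref{lem:positive_martingale}. The paper's proof isolates the term at $j=k-1$ and performs a case split: if $(S,k-1)$ is not up-down, then $H_{k-1}(S)=0$ by construction and one bounds below by $f_{k-1}(S)\ge 0$; if $(S,k-1)$ is up-down and $(L_{(S,k)})$ holds, one is in Case A after all; and if $(S,k-1)$ is up-down and $(L_{(S,k)})$ fails, one uses $(H.1)$ to find $S^0,S^1\in\Se_{(S,k-1)}$ with $S^0_k\le S_k\le S^1_k$ for which $(L_{(S^\iota,k)})$ does hold, and then applies the increment inequality to $S^0$ or $S^1$ (according to the sign of $H_{k-1}(S)$, which is constant on the node) rather than to $S$. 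Your plan needs to incorporate this last step; without it, the bound at $j=k-1$ simply does not follow.
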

\begin{proof}
We construct $(H_j)$ as in the proof of  the implication $(i) \Rightarrow (ii)$ in Theorem \ref{thm:decomposition}. 
With $\tau^\#$ as in the proof of Theorem \ref{thm:decomposition}, we note that:
If $j\leq \tau^\#(S)-2$ or if [$j= \tau^\#(S)-1$ and $(S,j)$ is an up-down node and $(L_{(S,j+1)})$ holds], we are in Case A of the proof of Theorem \ref{thm:decomposition} and, thus, by \eqref{eq:increment}, 
\begin{equation}\label{eq:0004}
f_{j+1}(S)-f_j(S)\leq \delta_j +  H_j( S)(S_{j+1}-S_j)
\end{equation}
Moreover, if $j\geq \tau^\#(S)$ or  [$j= \tau^\#(S)-1$ and $(S,j)$ is not an up-down node], then 
\begin{equation}\label{eq:0005}
H_j(S)=0.
\end{equation}
We now distinguish two cases:
\\[0.2cm] 
(i) $i<\tau^\#(S)$. 

Then $j\leq \tau^\#(S)-2$ for any $j<i$, and, consequently, by \eqref{eq:0004},
$$
f_0+\sum_{j=0}^\infty \delta_j +\sum_{j=0}^{i-1}H_j(S)\Delta_jS\geq f_0 +\sum_{j=i}^\infty \delta_j +\sum_{j=0}^{i-1} f_{j+1}(S)-f_j(S)\geq f_i(S)\geq 0.
$$
\ \\[0.1cm]
(ii) $i\geq \tau^\#(S)$. 

Then, by \eqref{eq:0005} and \eqref{eq:0004},
\begin{eqnarray*}
&&	f_0+\sum_{j=0}^\infty \delta_j +\sum_{j=0}^{i-1}H_j(S)\Delta_jS \\ &=& f_0 +\sum_{j=\tau^\#(S)}^\infty \delta_j +\sum_{j=0}^{\tau^\#(S)-2} \delta_j +H_j( S)(S_{j+1}-S_j) \\ &&+ \left(\delta_{\tau^\#(S)-1}+H_{\tau^\#(S)-1}( S)(S_{\tau^\#(S)}-S_{\tau^\#(S)-1})\right) \\
&\geq & f_{\tau^\#(S)-1}(S)+\left(\delta_{\tau^\#(S)-1}+H_{\tau^\#(S)-1}( S)(S_{\tau^\#(S)}-S_{\tau^\#(S)-1})\right).
\end{eqnarray*}
If $(S,\tau^\#(S)-1)$ is not an up-down-node, then, by \eqref{eq:0005},
$$
f_0+\sum_{j=0}^\infty \delta_j +\sum_{j=0}^{i-1}H_j(S)\Delta_jS\geq  f_{\tau^\#(S)-1}(S)\geq 0.
$$
If  $(S,\tau^\#(S)-1)$ is an up-down-node and $(L_{(S,\tau^\#(S))})$ holds, then, by \eqref{eq:0004},
$$
f_0+\sum_{j=0}^\infty \delta_j +\sum_{j=0}^{i-1}H_j(S)\Delta_jS\geq  f_{\tau^\#(S)}(S)\geq 0.
$$
If  $(S,\tau^\#(S)-1)$ is an up-down-node and $(L_{(S,\tau^\#(S))})$ fails, then, by Assumption $(H.1)$ with $j=\tau^\#(S)-1$, there are $S^1,S^0\in (S,\tau^\#(S)-1)$ such that 
 $L_{(S^\iota,\tau^\#(S))}$ holds for $\iota= 0, 1$ and $S^0_{\tau^\#(S)}\leq  S_{\tau^\#(S)}\leq S^1_{\tau^\#(S)}$. If $H_{\tau^\#(S)-1}(S)\geq 0$, then, by invoking \eqref{eq:0004} for $S^1$,
 \begin{eqnarray*}
&& f_0+\sum_{j=0}^\infty \delta_j +\sum_{j=0}^{i-1}H_j(S)\Delta_jS\\ &\geq& f_{\tau^\#(S)-1}(S^1)+\left(\delta_{\tau^\#(S)-1}+H_{\tau^\#(S)-1}( S^1)(S^1_{\tau^\#(S)}-S^1_{\tau^\#(S)-1})\right)\geq f_{\tau^\#(S)}(S^1)\geq 0.
\end{eqnarray*}
If $H_{\tau^\#(S)-1}(S)< 0$, then, the same argument with $S^0$ in place of $S^1$ yields
$$
 f_0+\sum_{j=0}^\infty \delta_j +\sum_{j=0}^{i-1}H_j(S)\Delta_jS\geq f_{\tau^\#(S)}(S^0)\geq 0.
$$
\end{proof}

\begin{proof}[{\bf Proof of Theorem \ref{doobConvergence}}] 
	Fix a sequence $(\delta_j)_{j\ge 0}$ of positive reals with $\sum_j\delta_j<\infty$. By Theorem \ref{thm:decomposition}  there are sequences $({H}_j)_{j\ge 0}$, $(A_j)_{j\ge 0}$ of non-anticipating real-valued functions such that $(A_j)_{ j \geq 0}$ is nondecreasing, $A_0 = 0$, and
\begin{equation}\label{SupermartingaleDecomposition}
 f_i(S)=f_0 + \sum\limits_{j=0}^{i-1} {H}_j(S)\Delta_jS - A_i(S) + \sum_{j=0}^{i-1}\delta_j,
\end{equation}
for every $S\in\Se\setminus {N}_f$ and $i\ge 0$, with $N_f$ a null set independent of $(\delta_j)_{j\ge 0}$.
Besides, with ${V}\equiv f_0+\sum\limits_{j=0}^{\infty}\delta_j$ it follows by Lemma \ref{lem:positive_martingale} that
\begin{equation} \label{neededPositivityProperty}
\Pi^{{V},{H}}_i(S)=f_0+\sum_{j=0}^{\infty}\delta_j+\sum_{j=0}^{i-1}{H}_j(S)\Delta_jS\ge 0,
\end{equation}
for every $i\ge 0$ and $S\in\Se$.

Having in mind that there are no portfolio restrictions on $\He$,  from \cite[Theorem 2]{ferrando}  it follows that there exists a null set $N_0$ such that $\lim\limits_{i\to\infty}\Pi^{V,{H}}_i(S)$ exists and is finite for any $S\in\Se\setminus N_0$. Consequently $(\Pi^{{V},{H}}_i(S))_{i\ge 0}$ is bounded for those $S$. Let $\mathcal{N}_{div} \equiv N_0 \cup N_f$ and restrict the following arguments to $S \in \mathcal{S} \setminus \mathcal{N}_{div}$. Therefore, from (\ref{SupermartingaleDecomposition}) and the nonnegativity of $f$, $(A_i(S))_{i\ge 0}$ results bounded and since it is nondecreasing, $\lim_{i\to\infty}A_i(S)$ also exists (and so it is finite). So $\lim\limits_{i\to\infty} f_i(S)$ exists in $\mathbb{R}$  from (\ref{SupermartingaleDecomposition}), for any $S \in \Se \setminus \mathcal{N}_{div}$.
\end{proof}

\section{On the Relation between the Two Superhedging Operators}\label{sec:I_vs_sigma}

As another consequence of the supermartingale decomposition, we show, in this section, that $\overline \sigma$ is the `correct' superhedging operator in the sense that, for bounded (from below) functions with finite maturity, it corresponds to the minimal superhedging price within the usual class of simple portfolios up to the null sets induced by $\bar I$. We also clarify the relation between the two (conditional) superhedging operators $\overline I_j$ and $\overline \sigma_j$.

\begin{theorem}\label{thm:sigma_is_correct}
	Suppose that $(L)$-a.e. holds and that $f\in Q$ has maturity $n_f\in \mathbb{N}$, is bounded from below and satsifies $\overline \sigma f<\infty$. Let $0\leq j<n_f$. Then:
	For every $\epsilon>0$, there are a null set $N_f$ and a non-anticipative sequence 
	$(H_i)_{i=j,\ldots, n_f-1}$ such that for every $S\in \Se\setminus N_f$
	$$
	f(S)\leq (\overline\sigma_j f(S)+\epsilon)+ \sum_{i=j}^{n_f-1} H_i(S)\Delta_iS.
	$$
	Conversely, if there are a $V\in Q$ with maturity $j$ and a null set $\tilde N_f$ such that for every $S\in\Se \setminus \tilde N_f$
	$$
	f(S)\leq V(S_0,\ldots, S_j)+ \sum_{i=j}^{n_f-1} H_i(S)\Delta_iS,
	$$
	then $\overline \sigma_jf\leq V$ a.e.
	
	In particular,
	\begin{eqnarray*}
		\overline \sigma f&=\quad\inf\{V\in \mathbb{R}|& \exists (H_j)_{j=0,\ldots, n_f-1} \textnormal{ non-anticipative such that } \\ && V+\sum_{j=0}^{n_f-1} H_j(S)\Delta_jS \geq f(S) \textnormal{ for a.e. }S\in \Se \}.
	\end{eqnarray*}
\end{theorem}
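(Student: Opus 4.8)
\emph{Strategy.} The plan is to obtain the forward (superhedging) assertion from the supermartingale decomposition of Theorem~\ref{thm:decomposition}, applied to the supermartingale $(\overline\sigma_i f)_{i\ge 0}$ of Example~\ref{ex:supermart}-(b); to obtain the converse from the local identity $\overline\sigma_j\Pi_{j,n}^{V,H}=V$, valid under $(L_{(S,j)})$ by Proposition~\ref{Properties_L}-(4), together with the fact that $\overline I_j$ annihilates functions supported on a null set; and to read off the closed formula by specializing both halves to $j=0$.

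\emph{Preliminaries.} First I would record that each $\overline\sigma_i f$ is a.e.\ real-valued. From below, with $c\ge 0$ a lower bound of $f$, we have $\overline\sigma_i f\ge\overline\sigma_i(-c)=-c$ a.e.\ since $(L)$-a.e.\ holds. From above, put $h:=\overline\sigma_i f+c$, which is $\ge 0$ a.e.; by subadditivity of $\overline\sigma_0$ and the Tower Inequality of Proposition~\ref{mainDirectionOfTowerProperty s-t}, $\overline\sigma_0 h\le\overline\sigma_0(\overline\sigma_i f)+c\le\overline\sigma f+c<\infty$, and for any nonnegative $h$ with $\overline\sigma_0 h<\infty$ a finite-cost covering $h\le g_0+\sum_{m\ge 1}g_m$ (the elementary function $g_0$ being finite-valued) yields $\mathbf 1_{\{h=\infty\}}\le a^{-1}\sum_{m\ge 1}g_m$ for every $a>0$, so $\overline I(\mathbf 1_{\{h=\infty\}})=0$ by countable subadditivity of $\overline I$; hence $\overline\sigma_i f<\infty$ a.e. Second, since $f$ has maturity $n_f$ it is constant on each node $\Se_{(S,n_f)}$, so $\overline\sigma_{n_f}f\le f$ everywhere, and $\overline\sigma_{n_f}f=f$ a.e.\ by Proposition~\ref{Properties_L}-(4) at the a.e.\ nodes where $(L_{(S,n_f)})$ holds; in particular $f$ is a.e.\ finite.

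\emph{Forward direction.} I would replace $(\overline\sigma_i f)_i$ by an everywhere real-valued sequence $(\tilde f_i)_i$, redefining it on a null set; it is still a supermartingale, because changing a function on a null set leaves $\overline\sigma_j$ unchanged a.e.\ and the Tower Inequality gives $\overline\sigma_j\tilde f_{j+1}\le\overline\sigma_j f\le\tilde f_j$ a.e. Applying Theorem~\ref{thm:decomposition} to $(\tilde f_i)$ with a summable sequence $(\delta_l)$ so small that $\sum_{l=j}^{n_f-1}\delta_l<\epsilon$, writing the resulting decomposition at levels $n_f$ and $j$, and subtracting, the initial term, the increments below level $j$ and the low-order $\delta$'s cancel, and the monotonicity $A_{n_f}\ge A_j$ leaves $\tilde f_{n_f}(S)\le\tilde f_j(S)+\epsilon+\sum_{i=j}^{n_f-1}H_i(S)\Delta_iS$ for $S$ outside the $\overline I$-null set $N_f$ of the theorem. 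Enlarging $N_f$ by the null sets $\{\tilde f_{n_f}\ne f\}$ and $\{\tilde f_j\ne\overline\sigma_j f\}$ (null by the preliminaries) gives precisely the claimed inequality, with $(H_i)_{i=j,\ldots,n_f-1}$ non-anticipative.

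\emph{Converse, closed formula, and the main obstacle.} For the converse I fix a partition $\{\Se_{(S^\lambda,j)}\}_\lambda$ of $\Se$ into level-$j$ nodes and restrict to the a.e.\ set of indices $\lambda$ for which $(L_{(S^\lambda,j)})$ holds and $\overline I_j\mathbf 1_{\tilde N_f}(S^\lambda)=0$; the remaining $\lambda$'s form a null set by Definition~\ref{assumptionL-ae} and by the fact that $\overline I_j\mathbf 1_{N}=0$ $\overline I$-a.e.\ for null $N$ (Proposition~\ref{requiredProperties}). On such a node put $g:=\Pi_{j,n_f}^{V(S^\lambda),H}\in\mathcal E_{(S^\lambda,j)}$; since $(f-g)_+$ vanishes off $\tilde N_f$ we have $(f-g)_+\le\sum_{n\ge 1}\mathbf 1_{\tilde N_f}$, so $f\le g+(f-g)_+$ together with subadditivity of $\overline\sigma_j$ and $\overline\sigma_j\le\overline I_j$ on nonnegative functions gives $\overline\sigma_j f(S^\lambda)\le\overline\sigma_j g(S^\lambda)+\overline I_j((f-g)_+)(S^\lambda)\le V(S^\lambda)+0$, using Proposition~\ref{Properties_L}-(4) for the first term and countable subadditivity of $\overline I_j$ for the second; hence $\overline\sigma_j f\le V$ a.e. The closed formula is the case $j=0$: the forward part shows $\overline\sigma f+\epsilon$ lies in the right-hand set for every $\epsilon>0$, whereas the converse applied with the constant $V$ (of maturity $0$) yields $\overline\sigma_0 f\le V$ a.e., and therefore $\overline\sigma f\le V$, since an a.e.\ inequality between constants is genuine because $\overline I(\mathbf 1_\Se)=1$ under $(L)$. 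I expect the main obstacle to be precisely the preliminary step: Theorem~\ref{thm:decomposition} presupposes a real-valued supermartingale, so one must argue with some care that $(\overline\sigma_i f)_i$ is a.e.\ finite --- finiteness from above via the Tower Inequality and a scaling argument against a finite-cost covering, finiteness from below via $(L)$-a.e.\ --- and then pass to an everywhere real-valued version without losing the supermartingale property; the parallel subtlety in the converse is the clean absorption of the exceptional set $\tilde N_f$ into the $\overline I_j$-term.
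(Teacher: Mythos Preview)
Your argument is correct and follows the same overall architecture as the paper: the forward direction via the supermartingale decomposition (Theorem~\ref{thm:decomposition}) applied to $(\overline\sigma_i f)_{i\ge 0}$, and the converse via $\overline I_j$ annihilating the exceptional set $\tilde N_f$ together with $\overline\sigma_j\Pi_{j,n_f}^{V,H}=V$ under $(L_{(S,j)})$.

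The one point where you diverge from the paper is precisely the step you flagged as the main obstacle. The paper packages the passage to a real-valued supermartingale into a separate lemma (Lemma~\ref{lem:sigma_is_correct}), constructing $f_i(S)=\overline\sigma_i f(S)\mathbf 1_{\{i<\tau^\#(S)\}}+c\,\mathbf 1_{\{i\ge\tau^\#(S)\}}$ via the stopping time $\tau^\#$ of the proof of Theorem~\ref{thm:decomposition}, and then verifies $f_i(S)<\infty$ \emph{everywhere} using the Aggregation Lemma (Lemma~\ref{convergenceOfPortfolioCoordinates}) at the nodes where it matters. Your approach is more abstract: you show $\overline\sigma_i f$ is a.e.\ finite (your scaling argument against a finite-cost covering for finiteness from above is neat and works), then redefine on the null exceptional set. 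Both routes are valid; the paper's explicit construction has the advantage that the resulting supermartingale takes values in $[c,\infty)$ by design, which becomes relevant later in Theorem~\ref{thm:K} where the same lemma is reused together with Lemma~\ref{lem:positive_martingale}. Your redefinition-on-a-null-set gives a real-valued supermartingale with no such pointwise lower bound, which is enough here but would not feed directly into that later argument.
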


The proof combines Theorem \ref{thm:decomposition} with the following lemma, which deals with the issue that the supermartingale $(\overline\sigma_i f)_{i\geq 0}$ may take values $\pm \infty$.
\begin{lemma}\label{lem:sigma_is_correct}
	Suppose $(L)$-a.e. If $f\in Q$ is bounded from below by some $c\in \mathbb{R}$ and satisfies $\overline \sigma f<\infty$, then there is a supermartingale $(f_i)_{i\geq 0}$ with values in $[c;+\infty)$ such that $\overline \sigma_j f=f_j$ a.e. for every $j\geq 0$. 
\end{lemma}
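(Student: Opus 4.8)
The plan is to construct the candidate supermartingale by truncating the conditional outer integrals of $f$ at the lower bound $c$. Specifically, I would set $f_j(S) \equiv \max\{\overline\sigma_j f(S), c\}$ for every $j \geq 0$ and $S \in \Se$. Since $f \geq c$ and $c$ (viewed as a constant function) is trivially superhedged by the portfolio with $V = c$ and $H \equiv 0$, we have $\overline\sigma_j f \geq \overline\sigma_j c$; and under $(L_{(S,j)})$ the latter equals $c$ (by Proposition \ref{Properties_L}, item (4), applied to the elementary function $\Pi_{j,n}^{c,0}$). Hence $\overline\sigma_j f(S) \geq c$ at every node where $(L_{(S,j)})$ holds, which by Assumption $(L)$-a.e. is a.e.\ every node; at such nodes $f_j = \overline\sigma_j f$. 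So the truncation only modifies $\overline\sigma_j f$ on the null set $\mathcal N^{(L)}$ (together with the null set where the lower bound is violated, which is empty here), and moreover $f_j < \infty$ a.e.\ because $\overline\sigma_j f \leq \overline\sigma_j(\overline\sigma_0 f \text{ stuff})$; more directly, $\overline\sigma_j f \leq \overline\sigma f < \infty$ fails in general, so I would instead argue finiteness a.e.\ from the tower inequality: by Proposition \ref{mainDirectionOfTowerProperty s-t}, $\overline\sigma_0(\overline\sigma_j f) \leq \overline\sigma_0 f = \overline\sigma f < \infty$, which forces $\overline\sigma_j f < \infty$ a.e. Thus $(f_j)$ takes values in $[c, +\infty)$ a.e., and redefining $f_j$ to equal, say, $c$ on the exceptional null set makes it take values in $[c,+\infty)$ everywhere while changing nothing a.e.; I must only be careful that the final $(f_j)$ is still non-anticipative, which holds since $\overline\sigma_j f$ is non-anticipative (indeed constant on nodes $(S,j)$) and the modification is on a union of nodes.

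Next I would verify the supermartingale inequality $\overline\sigma_j f_{j+1} \leq f_j$ a.e. The key input is the supermartingale property of the raw sequence $(\overline\sigma_j f)_{j\geq 0}$, recorded in Example \ref{ex:supermart}-(b): $\overline\sigma_j(\overline\sigma_{j+1} f) \leq \overline\sigma_j f$ a.e., which is just the tower inequality. Since $f_{j+1} = \overline\sigma_{j+1} f$ a.e.\ and $f_{j+1} \geq \overline\sigma_{j+1} f$ everywhere (truncation can only increase), I need to control how $\overline\sigma_j$ reacts to modifying its argument upward on a null set. Here I would use the a.e.-insensitivity of $\overline\sigma_j$: from the basic properties (subadditivity together with the fact that $\overline I_j$ of an indicator of a null set is a.e.\ zero — this is exactly the mechanism used at the end of the proof of Theorem \ref{thm:decomposition}, via Proposition \ref{requiredProperties}--e)), one gets $\overline\sigma_j g = \overline\sigma_j h$ a.e.\ whenever $g = h$ a.e. Applying this with $g = f_{j+1}$ and $h = \overline\sigma_{j+1} f$ gives $\overline\sigma_j f_{j+1} = \overline\sigma_j(\overline\sigma_{j+1} f) \leq \overline\sigma_j f \leq f_j$ a.e., which is the desired inequality (the last step because $f_j \geq \overline\sigma_j f$ everywhere).

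I expect the main obstacle to be the careful bookkeeping of null sets and the passage $\overline\sigma_j g = \overline\sigma_j h$ a.e.\ under an a.e.-modification of the argument; the paper does not state this as a standalone lemma in the excerpt, so I would either isolate it or quote the relevant items of Proposition \ref{requiredProperties} and Corollary \ref{cor:leq_ae} and reproduce the short argument (bound $(g-h)_+$ by a multiple of the indicator of the null set and use countable subadditivity of $\overline\sigma_j$ together with $\overline I_j$ killing that indicator a.e.). A secondary subtlety is ensuring finiteness: I must confirm $\overline\sigma_j f(S) > -\infty$ a.e.\ as well, but this again follows from $(L_{(S,j)})$, since at such nodes $\underline\sigma_j f \leq \overline\sigma_j f$ and $\underline\sigma_j f \geq c > -\infty$ because $f \geq c$ (applying the lower-bound argument to $-f$). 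Collecting these, $(f_j)$ is a well-defined, non-anticipative, $[c,+\infty)$-valued supermartingale with $\overline\sigma_j f = f_j$ a.e.\ for every $j$, which is the claim. A final remark I would include: only countably many $j$ are involved and each exceptional event is null, so by Proposition \ref{leinertTheorem}-(2) the union over $j$ of all exceptional null sets is again null, so all the "a.e." statements can be aggregated into a single null set outside of which everything holds simultaneously.
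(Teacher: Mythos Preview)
Your approach is correct but takes a different route from the paper's proof. The paper introduces the stopping time
\[
\tau^{\#}(S)= \inf \{k\ge 0: (L_{(S,k)})\; \mbox{fails, or}\; [(S,k-1)\; \mbox{is a type I arbitrage node and}\; S_k\neq S_{k-1}]\}
\]
and defines $f_i(S)=\overline \sigma_if(S){\bf 1}_{\{i<\tau^\#(S)\}} + c{\bf 1}_{\{i\geq\tau^\#(S)\}}$, invoking Lemma~\ref{st-portfolio} for non-anticipativity. Finiteness is then established \emph{pointwise} on $\{i<\tau^{\#}\}$ by taking a global superhedge of $f$, restricting it to $\Se_{(S,i)}$, and using the Aggregation Lemma~\ref{convergenceOfPortfolioCoordinates} to show that $\sum_{m}\Pi_{0,i}^{V^m,H^m}(S)$ converges in $\mathbb{R}$. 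Your construction via $f_j=\max\{\overline\sigma_j f,c\}$ (with a second redefinition to $c$ on $\{\overline\sigma_j f=+\infty\}$) avoids the stopping time and the Aggregation Lemma entirely, relying instead on the a.e.-insensitivity of $\overline\sigma_j$ (Corollary~\ref{cor:leq_ae}) and the tower inequality; what the paper's route buys is direct pointwise control via $\tau^\#$, which is reused verbatim in Lemma~\ref{lem:positive_martingale} and Theorem~\ref{thm:K}.

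One step in your argument deserves an extra line of justification: the implication ``$\overline\sigma_0(\overline\sigma_j f)<\infty$ forces $\overline\sigma_j f<\infty$ a.e.'' is true but not stated anywhere in the paper. To see it, take any superhedge $h\le \Pi_{0,n_0}^{V^0,H^0}+\sum_{m\ge 1}\liminf_n \Pi_{0,n}^{V^m,H^m}$ with $\sum_{m\ge 1}V^m<\infty$; since the first summand is real-valued, on $\{h=+\infty\}$ the nonnegative tail must diverge, whence $\infty\,{\bf 1}_{\{h=+\infty\}}\le \sum_{m\ge 1}\liminf_n \Pi_{0,n}^{V^m,H^m}$ on all of $\Se$. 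This gives $\overline I(\infty\,{\bf 1}_{\{h=+\infty\}})\le \sum_{m\ge 1}V^m<\infty$, and positive homogeneity of $\overline I$ (Proposition~\ref{propertiesOfIBarra}) together with Proposition~\ref{leinertTheorem}(1) then forces $\{h=+\infty\}$ to be null. With this filled in, your proof goes through.
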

\begin{proof}
	As in the proof of Theorem \ref{thm:decomposition}, we consider the stopping time
	\[
	\tau^{\#}(S)= \inf \{k\ge 0: L_{(S,k)}\; \mbox{fails, or}\; [(S,k-1)\; \mbox{is a type I arbitrage node and}\; S_k\neq S_{k-1}]\}.
	\]
	Define
	$$
	f_i(S)=\overline \sigma_if(S){\bf 1}_{\{i<\tau^\#(S)\}} + c{\bf 1}_{\{i\geq\tau^\#(S)\}},\quad i\geq 0,\;S\in \mathcal{S},
	$$
    and note that $(f_i)_{i\geq 0}$ is non-anticipative by Lemma  \ref{st-portfolio}. Recall that $\{S\in \mathcal{S}|\; \tau(S)<\infty\}$ is a null set by Lemma \ref{nullityOfN_j} and by assumption $(L)$-a.e. In view of Example \ref{ex:supermart}--b), we conclude that $(f_j)$ is a supermartingale and $\overline \sigma_j f=f_j$ a.e. for every $j\geq 0$.
	
	For the lower bound, note that by Proposition \ref{Properties_L},
	$$
	f_i(S)=\overline\sigma_if(S)\geq \overline\sigma_i(c)(S)=c,
	$$
	whenever $i<\tau^\#(S)$. We finally need to verify that $f_i(S)<\infty$ for every $i\geq 0$ and $S\in \mathcal{S}$. Since $\overline \sigma f< \infty$, we find 
	$\Pi_{0, n_0}^{V^0,H^0}\!\!\in \mathcal{E}_0$ and, for every  $m\in \mathbb{N}$, $\Pi_{0, \cdot}^{V^m, H^m}$  such that $\Pi_{0, n}^{V^m, H^m}\!\!\in \mathcal{E}^+_0$ for every $n\geq 0$, $\sum_{m=1}^\infty V^m<\infty$ and
	$$
	f(S)\leq  \sum_{m=0}^\infty \liminf_{n\rightarrow \infty} \Pi_{0, n}^{V^m, H^m}(S),\quad S\in \mathcal{S}.
	$$
	We now fix a node $(S,i)$. The previous inequality implies 
	$$
	f(\tilde S)\leq   \sum_{m=0}^\infty \liminf_{n\rightarrow \infty} \Pi_{i, n}^{\tilde V(S), \tilde H^m}(\tilde S),\quad \tilde S\in \mathcal{S}_{(S,i)}
	$$
where $\tilde V(S)=\Pi_{0, i}^{V^m, H^m}(S)$ and $\tilde H^m=  H^m_{|\mathcal{S}_{(S,i)}}$. Hence, $\overline \sigma_i f(S)\leq \sum_{m=0}^\infty \Pi_{0, i}^{V^m, H^m}(S)$. We still need to check that the right-hand side is finite, if $i<\tau^\#(S)$. In this case, we may
  (thanks to Lemma~\ref{priceMinusInfinity}) apply the Aggregation Lemma \ref{convergenceOfPortfolioCoordinates} to conclude that 
$$
\sum_{m=0}^\infty \Pi_{0, i}^{V^m, H^m}(S)=\sum_{m=0}^\infty V^m +\sum_{j=0}^{i-1} \sum_{m=0}^\infty (H^m_j(S)\Delta_jS)
$$
where each of the series converges in $\mathbb{R}$.
\end{proof}

\begin{proof}[{\bf Proof of Theorem \ref{thm:sigma_is_correct}}]
	 Fix some $\epsilon>0$ and choose a sequence $(\delta_i)$ of positive reals such that $\sum_{i=j}^{n_f-1} \delta_j\leq \epsilon$. We fix a supermartingale $(f_i)_{i\geq 0}$ as in Lemma \ref{lem:sigma_is_correct} and note that, thanks to Lemma~\ref{integProperty} and assumption $(L)$-a.e., $f_i=\overline\sigma_if=f$ a.e. for $i\geq n_f$.
	 Hence,
	  we may apply Theorem \ref{thm:decomposition} to $(f_i)$ in order to construct a non-anticipative sequence $(H_i)_{i=0,\ldots, n_f-1}$ such that
	$$
	f(S)=f_{n_f}(S)\leq \left( \overline \sigma_j f+\epsilon\right) +\sum_{i=j}^{n_f-1}H_i(S)\Delta_iS 
	$$
	for every $S\in \Se\setminus N_f$, where $N_f$ is a null set. 
	
	For the converse inequality, assume that there are $V\in Q$ with maturity $j$, $H=(H_i)_{i=j,\ldots, n_f-1}$ non-anticipative, and a null set $\tilde N_f$ such that for every $S\in \Se\setminus \tilde N_f$
	$$
	f(S)\leq V(S_0,\ldots,S_j)+\sum_{i=j}^{n_f-1} H_i(S)\Delta_iS.
	$$
	If $V(S_0,\ldots,S_j)=+\infty$, the inequality $\overline \sigma_j f(S)\leq V(S)$ is trivial. Otherwise, we note that
	 $g=\infty {\bf 1}_{\tilde N_f}$ is a null function. Hence, by 	Proposition \ref{requiredProperties}--e), there is a null set $N_f$ such that $\overline I_jg(S)=0$ for every $S\in \Se \setminus N_f$. Consequently, for every $S\in \Se \setminus N_f$ and $\epsilon>0$ there are sequences $(V_m)_{m\geq 1}$  of nonnegative reals and $(H^m)_{m\geq 1}$ in $\He_{(S,j)}$ such that $\Pi_{j, n}^{V^m, H^m}\in \Ee^+_{(S,j)}$ for every $n\geq j$, $\sum_{m\geq 1} V_m\leq \epsilon$ and $g\leq \sum_{m\geq 1} \liminf_{n\to \infty}~~\Pi_{j, n}^{V^m, H^m}$ on $\Se_{(S, j)}$. Let $V^0=V(S_0,\ldots, S_j)$ and define $H^0$ via restricting the functions in $H$ on the conditional space $\Se_{(S, j)}$. Then,
	$$
	f\leq \Pi_{j,n_f}^{V^0,H^0}+\sum_{m\geq 1} \liminf_{n\to \infty}~~\Pi_{j, n}^{V^m, H^m} \mbox{ on } \Se_{(S, j)},
	$$
	which implies $\overline \sigma_j f(S)\leq V(S_0,\ldots,S_j)+\epsilon$ for every $S\in \Se \setminus N_f$. By passing with $\epsilon$ to zero, we obtain $\overline \sigma_j f\leq V$ a.e.   
\end{proof}

\begin{remark}
	Example \ref{exmp:no_martingale_measure} in Section \ref{sec:(L)} provides an example of a nonnegative, bounded function $f$ with finite maturity on a trajectory set satisfying $(L)$-a.e. such that $\overline I(f)>\overline\sigma f$. In light of the previous theorem, we may conclude that $\bar I$ cannot be applied for computing superhedging prices in general, but only serves as an auxiliary operator to determine the null sets of the model.
\end{remark}

The following theorem shows that inconsistencies between the two families of superhedging operators stem from the failure of $(L_{(S,j)})$ on a null set.

\begin{theorem}\label{thm:K}
	The following assertions are equivalent:\\[0.1cm]
	(i) For every $f\in P$ with finite maturity and every node $(S,j)$,
	$$
	\overline I_jf(S)=\overline \sigma_jf(S).
	$$
	(ii) $(L_{(S,j)})$ holds at every node $(S,j)$.
\end{theorem}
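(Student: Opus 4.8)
The plan is to prove the two implications separately, with the implication (ii) $\Rightarrow$ (i) being the routine direction and (i) $\Rightarrow$ (ii) being the one requiring a witness construction. For (ii) $\Rightarrow$ (i), fix $f\in P$ with finite maturity $n_f$ and a node $(S,j)$. If $j\ge n_f$ both operators agree with $f(S)$, so assume $j<n_f$. The inequality $\overline\sigma_jf(S)\le \overline I_jf(S)$ is immediate from the definitions, since every competitor in the $\overline I_j$-infimum (a countable sum of nonnegative $\liminf$-portfolios dominating $f$) is also an admissible competitor in the $\overline\sigma_j$-infimum (just take $f_0\equiv 0$). For the reverse inequality, take any admissible competitor $f_0+\sum_{m\ge 1}f_m$ for $\overline\sigma_jf$ on $\Se_{(S,j)}$ with $\sum_{m\ge 0}V^m<\infty$. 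Apply the Finite Maturity Lemma~\ref{finiteMaturitySuperhedging} (whose hypothesis is exactly that $(L_{(S,n_f)})$ holds, which is true under (ii)) to conclude $f\le \sum_{m\ge 0}\Pi_{j,n_f}^{V^m,H^m}$ on $\Se_{(S,n_f)}$, with each term constant there. This shows that the simple portfolio value $\Pi_{j,n_f}^{V^0+,H^0}$ plus the nonnegative ones $\Pi_{j,n_f}^{V^m,H^m}$, $m\ge 1$, reassembled as constant-payoff (hence trivially $\liminf$-type, using $H\equiv 0$ after $n_f$) nonnegative portfolios after absorbing $f_0$'s value, yield an $\overline I_j$-competitor of cost $\sum_m V^m$. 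A small care is needed because $f_0$ need not be nonnegative; but on $\Se_{(S,n_f)}$ it equals a constant, and since $f\ge 0$ the full sum is $\ge 0$, so one can redistribute the constant to make all summands nonnegative while keeping the total cost. Taking infima gives $\overline I_jf(S)\le \overline\sigma_jf(S)$.

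For (i) $\Rightarrow$ (ii), I would argue by contraposition: suppose $(L_{(S,j)})$ fails at some node $(S,j)$. Then there exist $f=\Pi_{j,n_f}^{V,H}\in\Ee_{(S,j)}$ and $f_m=\liminf_n\Pi_{j,n}^{V^m,H^m}$ with $\Pi_{j,n}^{V^m,H^m}\in\Ee^+_{(S,j)}$ such that $f\le\sum_{m\ge 1}f_m$ on $\Se_{(S,j)}$ but $V>\sum_{m\ge 1}V^m$. In the equivalent normalized form (see the Remark after Definition~\ref{lPropertyWithLimInf}), $0\le\sum_{m\ge 0}f_m$ on $\Se_{(S,j)}$ with $f_0\in\Ee_{(S,j)}$ and $\sum_{m\ge 0}V^m<0$. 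Now consider the zero function on $\Se$: clearly $0\in P$ has finite maturity (maturity $0$). The displayed covering shows $0\le\sum_{m\ge 0}f_m$, hence $\overline\sigma_j 0(S)\le\sum_{m\ge 0}V^m<0$, whereas $\overline I_j0(S)=0$ since $\overline I_j$ is nonnegative-valued (as noted after Definition~\ref{Iup_definition}). Thus $\overline I_j0(S)=0>\overline\sigma_j0(S)$, contradicting (i) with $f\equiv 0$.

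Actually, using $f\equiv 0$ makes the argument cleanest: the failure of $(L_{(S,j)})$ is, by the equivalent formulations recorded in the text (property $(L_{(S,j)})$ is equivalent to $0\le\overline\sigma_j0(S)$, i.e. to $\overline\sigma_j0(S)=0$), precisely the statement $\overline\sigma_j0(S)<0$; and $\overline I_j0(S)=0$ always. So (i), applied to the finite-maturity function $f\equiv 0\in P$, forces $\overline\sigma_j0(S)=0$ at every node, which is (ii). I expect the main obstacle to be the bookkeeping in the forward direction (ii) $\Rightarrow$ (i): one must verify carefully that after applying the Finite Maturity Lemma, the resulting collection of portfolios — in particular handling the possibly-signed $f_0$ and the passage from value-at-$n_f$ back to genuine $\Ee^+_{(S,j)}$-competitors of the $\overline I_j$-infimum — does not increase the total cost, so that the infimum defining $\overline I_jf(S)$ is bounded by that defining $\overline\sigma_jf(S)$. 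The reverse direction, by contrast, is essentially a one-line consequence of the already-established equivalence ``$(L_{(S,j)})\iff\overline\sigma_j0(S)=0$'' together with $\overline I_j\ge 0$.
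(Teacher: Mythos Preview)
Your argument for (i) $\Rightarrow$ (ii) is correct and is exactly the paper's: apply (i) to $f\equiv 0\in P$ (finite maturity) and use $\overline I_j 0(S)=0$ together with the equivalence $(L_{(S,j)})\Leftrightarrow \overline\sigma_j 0(S)=0$ from Proposition~\ref{Properties_L}.

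For (ii) $\Rightarrow$ (i), however, your plan has a genuine gap at the ``redistribution'' step. After applying the Finite Maturity Lemma (at every $\tilde S\in\Se_{(S,j)}$, which is legitimate under (ii)), you obtain
\[
f(\tilde S)\le \Pi_{j,n_f}^{V^0,H^0}(\tilde S)+\sum_{m\ge 1}\Pi_{j,n_f}^{V^m,H^m}(\tilde S)\quad\text{on }\Se_{(S,j)}.
\]
But an $\overline I_j$-competitor requires \emph{every} summand to satisfy $\Pi_{j,n}^{V^m,H^m}\in\mathcal{E}^+_{(S,j)}$ for \emph{all} $n\ge j$, not just at $n=n_f$. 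The $m=0$ term is signed, and its value $\Pi_{j,n_f}^{V^0,H^0}(\tilde S)$ is not a constant on $\Se_{(S,j)}$ (only on each $\Se_{(\tilde S,n_f)}$), so there is no ``constant to redistribute''. Merging $f_0$ into the other terms would require aggregating $\sum_m H^m_i$ into a single non-anticipative function; the Aggregation Lemma~\ref{convergenceOfPortfolioCoordinates} only guarantees convergence along paths through up-down nodes (or flat moves), and (ii) does \emph{not} exclude type~I arbitrage nodes, where the series $\sum_m H^m_i(\tilde S)$ may diverge. Even when aggregation succeeds, one still has to verify nonnegativity of the combined portfolio at every intermediate time $j\le n<n_f$, which your sketch does not address.

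The paper takes a substantially different route. It does not attempt to convert an arbitrary $\overline\sigma_j$-competitor into an $\overline I_j$-competitor. Instead (after reducing to the initial node by passing to the shifted trajectory set) it invokes the supermartingale decomposition Theorem~\ref{thm:decomposition} for the supermartingale $(\overline\sigma_i f)_{i\ge 0}$, and then uses Lemma~\ref{lem:positive_martingale}---whose hypothesis is precisely satisfied because (ii) makes $(L_{(S,j)})$ hold at \emph{every} node---to produce a \emph{single} portfolio $\Pi_{0,\cdot}^{V^0,H^0}$ that is nonnegative at all times and superhedges $f$ with initial cost $\overline\sigma f+\epsilon$, outside a null set. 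The null set is then absorbed at cost $\epsilon$ via a standard $\overline I$-cover, yielding $\overline I f\le \overline\sigma f+2\epsilon$. The construction of a globally nonnegative superhedging portfolio (rather than a reshuffling of a given competitor) is the missing idea in your proposal.
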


\begin{proof}
	(i) $\Rightarrow$ (ii): Since $\overline I_j(0)(S)=0$ always holds, we immediately obtain 
	$$
	\overline \sigma_j(0)(S)=\overline I_j(0)(S)= 0
	$$
	at any node $(S,j)$.
		\\[0.1cm]
	(ii) $\Rightarrow$ (i): \emph{Step 1:} We first consider the initial node $(S,0)$:
	
	Noting that $\overline \sigma f \leq \overline I f$ by definition, it suffices to show that $\overline I f \leq \overline  \sigma  f$, for which we may and do assume that $\overline \sigma f<\infty$. 
	  We now proceed as in the first part of the proof of Theorem \ref{thm:sigma_is_correct}, but note that the supermartingale $(f_i)_{i\geq 0}$ can be chosen $[0,\infty)$-valued and that Lemma \ref{lem:positive_martingale} is applicable, because $(L_{(S,j)})$ holds at every node $(S,j)$. By Theorem~\ref{thm:decomposition} and Lemma \ref{lem:positive_martingale}, we find 
	a non-anticipative sequence $(H_j)_{j=0,\ldots, n_f-1}$ such that
	$$
	\left( \overline \sigma f+\epsilon\right) +\sum_{i=0}^{\min\{j,n_f-1\}}H_i(S)\Delta_iS \geq 0 
	$$
	for every $S\in \mathcal{S}$ and 
	$$
	f(S)\leq \left( \overline \sigma f+\epsilon\right) +\sum_{i=0}^{n_f-1}H_i(S)\Delta_iS 
	$$
	for every $S\in \Se\setminus N_f$, where $N_f$ is a null set. Let $V^0=\overline \sigma f+\epsilon$ and define $H^0$ via $H^0_j=H_j$ for $j\leq n_f-1$ and $H^0_j=0$ for $j\geq n_f$.  Then $\Pi_{0,j}^{V^0,H^0}\in \mathcal{E}_0^+$ for every $j\geq 0$. Dealing with the null set $N_f$ as in the second part of the proof of Theorem \ref{thm:sigma_is_correct}, we conclude that $\overline I f\leq \overline \sigma f+2\epsilon$. Letting $\epsilon$ tend to zero, the proof of Step 1 is complete.
	\\[0.1cm]
	\emph{Step 2:}
	We now consider a generic node $(\tilde S,\tilde j)$: 
	
	Define the auxiliary trajectory set
	$$
	\widetilde{\mathcal{S}}=\{(S_{\tilde j+i})_{i\geq 0}|S\in \mathcal{S}_{(\tilde S,\tilde j)}\}.
	$$
	Then, the $\overline \sigma$-operator and the $\overline I$-operator  for $\widetilde{\mathcal{S}}$ at time 0 coincide with $\overline \sigma_{\tilde j}(\cdot)(\tilde S)$ and  $\overline I_{\tilde j}(\cdot)(\tilde S)$. Moreover, each node $(S,j)$ in $\widetilde{\mathcal{S}}$ coincides with the node $((\tilde S_0,\ldots,\tilde S_{\tilde j-1}, S_0,\ldots),$ $\tilde j+j)$ in $\mathcal{S}$. Hence, every node $(S,j)$ in $\widetilde{\mathcal{S}}$ satisfies $(L_{(S,j)})$. These observations reduce the case of a generic node to the case of an initial node.
\end{proof}

\section{Property $(L)$-a.e. and Some Examples} \label{sec:(L)}

In this section, we provide some easy-to-check sufficient conditions for the Assumption $(L)$-a.e., which is crucial for the main results of this paper, and for the additional assumption (H.1) required in the Supermartingale Convergence Theorem \ref{doobConvergence}. We will also present some examples.

We first recall the notion of trajectorial completeness from \cite{ferrando}.  
Given a sequence $(S^n)_{n\ge 0} \subseteq \mathcal{S}$ satisfying
\begin{equation}  \nonumber 
	~~~S^n_i = S^{n+1}_i \;\; 0 \leq i \leq n, ~~\forall ~n,~~~~~~~~
\end{equation}
define
\begin{equation}  \nonumber
	\overline{S}= (\overline{S}_i)_{i \geq 0}~~\mbox{by}~~~\overline{S}_i \equiv S^i_i.~~~
	\mbox{We will use the notation}~~\overline{S}= \lim_{n \rightarrow \infty} S^n.
\end{equation}
Notice that 
\begin{equation} \label{limitSequence}
	\overline{S}_i = S^n_i, ~~~0 \leq i \leq n,  ~~\forall ~n \geq 0.
\end{equation}
Let $\overline{\Se}$ be the set of such $\overline{S}$. Then, 
clearly, $\Se \subseteq \overline{\Se}$ given that for $\tilde{S}$ we can take $\tilde{S}^n = \tilde{S}$ for all $n \geq 0$. We say that $\mathcal{S}$ is {\it trajectorially complete}, if
$\mathcal{S}= \overline{\mathcal{S}}$.

As argued in Proposition 13 of \cite{ferrando}, $\overline{\mathcal{S}}$ is always trajectorially complete. Moreover, the completion process, i.e. passing from $\mathcal{S}$ to $\overline{\mathcal{S}}$ does not alter the type of the nodes (being up-down, no arbitrage, etc.), but it can change a null node (i.e., a node which constitutes a null set) into a non-null node.

The proof of the following theorem is similar to the arguments leading to Corollary 4 in \cite{ferrando}, where $\mathcal{S}$ is, however, assumed to have up-down nodes only. We provide the proof in Appendix \ref{app:L}, in which we also discuss more general sufficient conditions for $(L)$-a.e.

\begin{theorem}[{\bf  Sufficient conditions for $(L)$-a.e.}]\label{thm:L}
	Suppose $\mathcal{S}$ is trajectorially complete and the following condition holds:
	\begin{itemize}
		\item[(H.2)] If $(S,j)$ is an arbitrage node of type II, then $j\geq 1$, $(S,j-1)$ is an up-down node and for every $\epsilon>0$ there are $S^{\epsilon,1}, S^{\epsilon,2}\in \Se_{(S,j-1)}$ such that 
		$$
		S^{\epsilon,1}_j-S_{j-1}\geq -\epsilon,\quad S^{\epsilon,2}_j-S_{j-1}\leq \epsilon
		$$
		and such that $(S^{\epsilon,1},j)$, $(S^{\epsilon,2},j)$ are not type II arbitrage nodes.
	\end{itemize}
	Then, $(L)$-a.e. holds. More, precisely, $(L_{(S,j)})$ fails at a node $(S,j)$, if and only if $(S,j)$ is an arbitrage node of type II.
\end{theorem}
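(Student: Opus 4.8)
The plan is to establish the two directions of the equivalence in Theorem~\ref{thm:L} separately, with the bulk of the work going into showing that under trajectorial completeness and (H.2) the property $(L_{(S,j)})$ holds at every node that is \emph{not} a type II arbitrage node, and that it genuinely fails at every type II arbitrage node. The easy direction is the latter: if $(S,j)$ is a type II arbitrage node, then by Lemma~\ref{priceMinusInfinity} we have $\overline\sigma_j 0(S)<0$ (indeed $=-\infty$), which by the characterization in Proposition~\ref{Properties_L} is exactly the statement that $(L_{(S,j)})$ fails. Combined with the fact (cited from Lemma~\ref{nullityOfN_j}) that the set of trajectories passing through some type II arbitrage node is $\overline I$-null, and that $\mathcal N^{(II)}\subset\mathcal N^{(L)}$, this would already show that $\mathcal N^{(L)}$ is null provided we also know $(L_{(S,j)})$ holds at every non-type-II node. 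So the crux is the implication: arbitrage-free node or type I arbitrage node (under completeness and (H.2)) $\Longrightarrow$ $(L_{(S,j)})$.

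First I would reduce to the initial node via the shift trick already used in the proof of Theorem~\ref{thm:K}, Step~2: replacing $\mathcal S$ by $\widetilde{\mathcal S}=\{(S_{j+i})_{i\geq 0}: S\in\mathcal S_{(S,j)}\}$ turns $(L_{(S,j)})$ into $(L)$ for $\widetilde{\mathcal S}$, and trajectorial completeness and the type of nodes are preserved. So it suffices to prove that $(L)$ holds (i.e., $0\leq\overline\sigma_0 0$) whenever the root node $(S,0)$ is arbitrage-free or a type I arbitrage node, under completeness and (H.2). Concretely one must show: if $0\leq \sum_{m\geq 0} f_m$ on $\mathcal S$ with $f_0\in\mathcal E_0$, $f_m=\liminf_n\Pi^{V^m,H^m}_{0,n}$ and $\Pi^{V^m,H^m}_{0,n}\in\mathcal E_0^+$ for $m\geq 1$ and all $n$, then $0\leq\sum_{m\geq 0}V^m$. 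The standard Leinert-type strategy is to argue by contradiction: assume $\sum_m V^m<0$; then one constructs, trajectory by trajectory, a single trajectory $\overline S\in\mathcal S$ along which $\sum_m f_m(\overline S)<0$, contradicting the hypothesis. The construction proceeds inductively on the time index: at each node one uses the node-type dichotomy to pick the next increment $\Delta_k\overline S$ so that the running sums of the portfolio values $\sum_m \Pi^{V^m,H^m}_{0,k+1}(\overline S)$ stay small; up-down nodes allow one to move $\Delta_k$ either sign while keeping the (convergent, by Aggregation Lemma~\ref{convergenceOfPortfolioCoordinates}) aggregated position cost under control, flat nodes force $\Delta_k=0$, and type I arbitrage nodes are exactly the ones where a trajectory with $\hat S_{k+1}=\hat S_k$ exists so one can again hold cost fixed. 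The role of (H.2) is to handle the boundary between regions where the conditional $(L)$ holds and fails: it guarantees that a type II node always sits immediately after an up-down node admitting nearby increments (of either sign) into non-type-II nodes, so the inductive trajectory construction never gets ``trapped'' at a type II node where $\liminf$ portfolios could blow down to $-\infty$.

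The main obstacle I expect is precisely this inductive selection of $\overline S$ together with controlling the $\liminf$-portfolios $f_m$, $m\geq1$, simultaneously: one needs a diagonal/Cantor-type argument producing a sequence $S^n$ with $S^n_i=S^{n+1}_i$ for $i\leq n$, whose limit $\overline S$ lies in $\mathcal S$ (this is where trajectorial completeness is essential) and along which all the partial sums are controlled. The delicate point is that the $f_m$ for $m\geq1$ are only liminfs, so controlling $\Pi^{V^m,H^m}_{0,k}(\overline S)$ at finite times $k$ does not immediately control $\liminf_n\Pi^{V^m,H^m}_{0,n}(\overline S)$; one must interleave the ``unbounded horizon'' part of the argument, choosing at stage $k$ not only $\Delta_k\overline S$ but also making the tail portfolio increments vanish or stay nonnegative, exploiting $\Pi^{V^m,H^m}_{0,n}\geq0$ for all $n$. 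Bookkeeping the $\epsilon/2^m$-budgets across the countably many $m$ and across time, while respecting the node-type constraints and the (H.2) escape clause, is the technically heavy part. I would structure it as: (1) a lemma giving a one-step extension of a finite trajectory prefix keeping a weighted sum of portfolio values nearly non-increasing, valid at arbitrage-free and type I nodes thanks to (H.2); (2) iterate and pass to the completed limit; (3) conclude $\sum_m f_m(\overline S)\leq \sum_m V^m + 0 <0$, the desired contradiction. Everything else — reducing from $(L_{(S,j)})$ failing to $\overline\sigma_j0(S)<0$, the nullity of $\mathcal N^{(I)}$ and $\mathcal N^{(II)}$, and stitching the ``if and only if'' together — is routine given the results already proved in the excerpt and in \cite{ferrando}.
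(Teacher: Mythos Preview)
Your outline follows the paper's strategy closely: the same inductive trajectory construction, the same role for trajectorial completeness in passing to the limit $\overline S$, the same use of (H.2) to steer the construction away from type~II nodes, and the same appeals to Lemma~\ref{priceMinusInfinity} and Lemma~\ref{nullityOfN_j} for the converse direction and for the nullity of $\mathcal N^{(L)}$. The one substantive divergence is in what you call ``the technically heavy part''. You propose to bookkeep $\epsilon/2^m$-budgets across the countably many portfolios and worry that controlling finite-time values $\Pi^{V^m,H^m}_{0,k}(\overline S)$ does not control the individual $\liminf$'s. The paper sidesteps this entirely by aggregating \emph{first}: along any trajectory that avoids $N(S,j)$, the Aggregation Lemma~\ref{convergenceOfPortfolioCoordinates} makes $H_i\equiv\sum_{m\geq1} H^m_i$ a well-defined non-anticipative real-valued sequence, and Fatou's inequality $\sum_{m\geq1}\liminf_n \Pi^{V^m,H^m}_{j,n}\leq \liminf_n\sum_{m\geq1}\Pi^{V^m,H^m}_{j,n}$ collapses the covering to the single-portfolio estimate
\[
f_0\leq V+\liminf_{n}\sum_{i=j}^{n-1} H_i\,\Delta_i +\infty\cdot{\bf 1}_{N(S,j)}.
\]
One then constructs $\overline S$ so that the increments of this \emph{one} aggregated portfolio satisfy $\tilde H_k(\overline S)\,\Delta_k\overline S\leq\delta\,2^{-(k+1)}$ at each up-down step (and $\Delta_k\overline S=0$ otherwise), giving $\liminf_n\sum_i\tilde H_i\,\Delta_i\overline S\leq\delta$ directly. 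No $m$-indexed bookkeeping is needed and your obstacle dissolves. Without this aggregation step the per-$m$ budget scheme you sketch is genuinely problematic: the positions $H^m_k$ may carry different signs for different $m$, and the Aggregation Lemma delivers only conditional, not absolute, convergence of $\sum_m H^m_k$, so a single choice of $\Delta_k$ cannot in general make every individual increment $H^m_k\,\Delta_k$ small.
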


Whenever the trajectory set $\mathcal{S}$ is trajectorially complete and has no arbitrage nodes of type II, it follows that Theorem \ref{thm:L} implies that $(L_{(S,j)})$ holds at any node $(S,j)$ -- and then  condition (H.1), which is required in the supermartingale convergence theorem, is trivially satisfied. The assumption of trajectorial completeness  and the validity of $(L_{(S,j)})$  for all nodes $(S,j)$ is an implicit assumption in \cite{shafer} (see also Remark \ref{rem:ShaferVovk}). We weaken completeness in Theorem \ref{proofOfL} by means of hypothesis (H.5) and illustrated in Remark \ref{incompleteExample}.
	
The following corollary provides sufficient conditions for $(H.1)$ in the presence of arbitrage nodes of type II.

\begin{corollary}\label{cor:L}
	Suppose $\mathcal{S}$ is trajectorially complete and the following condition holds:
	\begin{itemize}
		\item[(H.3)] If $(S,j)$ is an arbitrage node of type II, then $j\geq 1$, $(S,j-1)$ is an up-down node and there are $S^{1}, S^{2}\in \Se_{(S,j-1)}$ such that 
		$$
		S^{1}_j> S_{j}> S^{2}_j
		$$
		and such that $(S^{1},j)$, $(S^{2},j)$ are not type II arbitrage nodes.
	\end{itemize}
	Then, $(L)$-a.e. and condition (H.1) in Theorem \ref{doobConvergence} hold.	
\end{corollary}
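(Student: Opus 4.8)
The plan is to derive both conclusions from results already available: $(L)$-a.e.\ will follow from Theorem~\ref{thm:L} once we check that (H.3) implies its hypothesis (H.2), and (H.1) will then follow from (H.3) combined with the precise characterization, also supplied by Theorem~\ref{thm:L}, of the nodes at which $(L_{(S,j)})$ fails.

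First I would show that (H.3) implies (H.2). Let $(S,j)$ be a type II arbitrage node. By (H.3) we already have $j\geq 1$ and that $(S,j-1)$ is an up-down node, so it remains to produce, for every $\epsilon>0$, trajectories $S^{\epsilon,1},S^{\epsilon,2}\in\Se_{(S,j-1)}$ with $S^{\epsilon,1}_j-S_{j-1}\geq-\epsilon$ and $S^{\epsilon,2}_j-S_{j-1}\leq\epsilon$ whose associated time-$j$ nodes are not type II arbitrage nodes. I would in fact construct these independently of $\epsilon$ and with the strict inequalities $S^{\epsilon,1}_j-S_{j-1}>0$ and $S^{\epsilon,2}_j-S_{j-1}<0$. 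For the ``up'' trajectory: up-down-ness of $(S,j-1)$ yields some $\tilde S\in\Se_{(S,j-1)}$ with $\tilde S_j-S_{j-1}>0$; if $(\tilde S,j)$ is not a type II arbitrage node we take $S^{\epsilon,1}\equiv\tilde S$, and otherwise we apply (H.3) to the type II arbitrage node $(\tilde S,j)$ --- whose predecessor $(\tilde S,j-1)=(S,j-1)$ is up-down --- to obtain $\bar S\in\Se_{(S,j-1)}$ with $\bar S_j>\tilde S_j>S_{j-1}$ and $(\bar S,j)$ not a type II arbitrage node, and we take $S^{\epsilon,1}\equiv\bar S$. The ``down'' trajectory is obtained symmetrically, starting from a trajectory in $\Se_{(S,j-1)}$ with strictly negative time-$j$ increment, which exists because $(S,j-1)$ is up-down. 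This establishes (H.2), so, since $\mathcal{S}$ is trajectorially complete, Theorem~\ref{thm:L} gives that $(L)$-a.e.\ holds and that $(L_{(S,j)})$ fails at a node $(S,j)$ if and only if $(S,j)$ is a type II arbitrage node.

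It then remains to verify (H.1). Suppose $(S,j)$ is an up-down node with $(L_{(S,j)})$ holding and $(L_{(S,j+1)})$ failing. By the characterization just obtained, $(S,j+1)$ is a type II arbitrage node; I would apply (H.3) to it, noting that its predecessor node $(S,j)$ is up-down by hypothesis. This supplies $S^1,S^2\in\Se_{(S,j)}$ with $S^1_{j+1}>S_{j+1}>S^2_{j+1}$ and with $(S^1,j+1)$ and $(S^2,j+1)$ not type II arbitrage nodes; by the characterization once more, $(L_{(S^\iota,j+1)})$ holds for $\iota=1,2$. Setting $S^0\equiv S^2$ and relaxing the strict inequalities to non-strict ones yields exactly the conclusion of (H.1). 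Since both $(L)$-a.e.\ and (H.1) have been established, the corollary follows.

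The step I expect to be the main obstacle is ``(H.3) $\Rightarrow$ (H.2)'': (H.3) is \emph{not} literally a strengthening of (H.2), because (H.3) controls the post-move level $S^\iota_j$ relative to $S_j$ while (H.2) needs control relative to $S_{j-1}$, and a type II arbitrage node at time $j$ imposes no relation between $S_j$ and $S_{j-1}$. Bridging this requires first moving strictly up (resp.\ down) out of the up-down node $(S,j-1)$ and, if the resulting node happens to be of type II, applying (H.3) once more to leave that exceptional set; once the ``if and only if'' of Theorem~\ref{thm:L} is in hand, the remaining bookkeeping for (H.1) is routine.
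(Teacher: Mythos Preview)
Your proposal is correct and follows essentially the same route as the paper's own proof: first deriving (H.2) from (H.3) via the up-down property at $(S,j-1)$ together with one further application of (H.3) if the chosen trajectory lands in a type~II node, then invoking Theorem~\ref{thm:L} for both $(L)$-a.e.\ and the characterization of nodes where $(L_{(S,j)})$ fails, and finally reading off (H.1) from that characterization. Your identification of the key subtlety---that (H.3) compares to $S_j$ while (H.2) compares to $S_{j-1}$---matches exactly the point the paper's argument is designed to bridge.
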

\begin{proof}
	We first show that (H.3) implies condition (H.2) in Theorem \ref{thm:L}. Suppose $(S,j)$ is a type II arbitrage node. Then, by (H.3), $j\geq 1$ and $(S,j-1)$ is an up-down node. Consequently, there is a $\tilde S\in (S,j-1)$ such that $\tilde S_j> \tilde S_{j-1}=S_{j-1}$. Then, for every $\epsilon>0$
	$$
	\tilde S_j-S_{j-1}>0>-\epsilon;
	$$
	and we may take $S^{\epsilon,1}=\tilde S$ in (H.2), provided $(\tilde S,j)$ is not an arbitrage node of type II. Otherwise, we may apply (H.3) with $\tilde S$ in place of $S$ and find some $S^1\in (S,j-1)=(\tilde S,j-1)$ such that $S^1_j>\tilde S_j$ and $(S^1,j)$ is not an arbitrage node of type II. Then, we may take $S^{\epsilon,1}=S^1$. The construction of $S^{\epsilon,2}$ follows by a `symmetric' argument.
	
	Now that (H.2) is verified, Theorem \ref{thm:L} implies that $(L)$-a.e. holds -- and that $L_{(S,j)}$ fails, if and only if $(S,j)$ is an arbitrage node of type II. The latter implies (H.1). Indeed, if $(S,j)$ is an up-down node and $L_{(S,j+1)}$ fails, then $(S,j+1)$ is a type II arbitrage node, and, thus, by (H.3) there are $S^{1}, S^{2}\in \Se_{(S,j)}$ such that 
	$$
	S^{1}_{j+1}> S_{j+1}> S^{2}_{j+1}
	$$
	and such that $L_{(S^1,j+1)}$, $L_{(S^2,j+1)}$ hold, because  $(S^{1},j)$, $(S^{2},j)$ are not type II arbitrage nodes.
\end{proof}

The following example illustrates that condition $(L)$-a.e. may hold in the absence of martingale measures and that that the $\delta$-sequence in the supermartingale decomposition (Theorem \ref{thm:decomposition}) cannot be dispensed with. At the same time, the example shows that the operators $\overline \sigma$ and $\overline I$ do, in general, not coincide on the cone of nonnegative functions.

\begin{ej}\label{exmp:no_martingale_measure}
Let
$$
\mathcal{S}=\mathcal{S}^+\cup  \mathcal{S}^-,\quad \mathcal{S}^\pm=\{S^{\pm,n}|n\in \mathbb{N}\}
$$	
where 
$$
S^{+,n}_i=\begin{cases} 1, & i=0 \\ 2, & i=1 \\ 2+\frac{1}{n}, & i\geq 2\end{cases}, \quad\quad  S^{-,n}_i=\begin{cases} 1, & i=0 \\ 1-\frac{1}{n^2}, & i\geq 1 \end{cases}.
$$
Then, the node $(S^{+,1},1)=(S^{+,n},1)$, $n\geq 2$, is an arbitrage node of type II, the initial node $(S,0)$ is up-down, and all other nodes are flat. Trajectories are displayed in the following illustration:
\[
{\setlength{\unitlength}{.08 cm} 
	\begin{picture}(80,65)
		\multiput(0,0)(0,1){60}{\circle*{.15}}
		\multiput(0,0)(1,0){80}{\circle*{.25}}
		\multiput(0,60)(1,0){80}{\circle*{.25}}
		\put(-4,0){\small $0$}\put(-4,13){\tiny $\frac34$}\put(-4,19){\tiny $1-$}
		\put(-4,39){\tiny $2-$}\put(-4,50){\tiny $\frac52$}\put(-4,59.05){\tiny $3$}
		\multiput(40,60)(20,0){3}{\circle*{1}}\put(81,60){\tiny $S^{+,1}$}
		\put(0,20){\circle*{1}}\put(0,20){\color{red}\line(1,1){20}}\put(20,40){\circle*{1}}
		\put(20,40){\color{red}\line(1,1){20}}\put(40,60){\color{red}\line(1,0){40}}
		\put(20,40){\color{red}\line(2,1){20}}\put(40,50){\circle*{1}}\put(40,50){\color{red}\line(1,0){40}} \multiput(40,50)(20,0){3}{\circle*{1}}\put(81,50){\tiny$S^{+,2}$}
		\put(20,40){\color{red}\line(3,1){20}}\put(40,46.9){\circle*{1}}\put(40,46.9){\color{red}\line(1,0){40}}
		\multiput(40,46.9)(20,0){3}{\circle*{1}}\put(81,46.9){\tiny$S^{+,3}$}
		\put(20,40){\color{red}\line(4,1){20}}\put(40,45){\circle*{1}}\put(40,45){\color{red}\line(1,0){40}}
		\multiput(40,45)(20,0){3}{\circle*{1}}\put(81,44){\tiny$S^{+,4}$} 
		\put(0,20){\color{blue}\line(6,-1){20}}\put(20,16.5){\color{blue}\line(1,0){60}}\multiput(20,16.5)(20,0){4}{\circle*{1}}
		\put(81,16.5){\tiny $S^{-,3}$}
		\put(0,20){\color{blue}\line(3,-1){20}}\put(20,13.15){\color{blue}\line(1,0){60}}\multiput(20,13.15)(20,0){4}{\circle*{1}}
		\put(81,13.15){\tiny $S^{-,2}$}
		\put(0,20){\color{blue}\line(1,-1){20}}\put(20,0){\color{blue}\line(1,0){60}}\multiput(20,0)(20,0){4}{\circle*{1}}
		\put(81,0){\tiny $S^{-,1}$}
		\put(-2,-5){\tiny $0$}\put(19,-5){\tiny $1$}\put(39,-5){\tiny $2$}\put(59,-5){\tiny $3$}\put(79,-5){\tiny $4$}
		\put(81,40){\scalebox{.6}{$\vdots$}} \put(39.5,40){\scalebox{.6}{$\vdots$}} \put(59.5,40){\scalebox{.6}{$\vdots$}} \put(81,20){\scalebox{.6}{$\vdots$}} \put(20,18){\scalebox{.6}{$\vdots$}} \put(39.5,18){\scalebox{.6}{$\vdots$}} \put(59.5,18){\scalebox{.6}{$\vdots$}} 
\end{picture}}
\]

\
\\[0.2cm]
a) We first show that $(L)$-a.e. holds, but there is no martingale measure for this trajectory set.

 Since $S^{-,n}_1-S^{-,n}_0=-1/n^2$ is arbitrarily close to zero for sufficiently large $n$, we observe that (H.2) is satisfied. Trajectorial completeness is obvious, because all trajectories stay constant after time $j=2$. Hence, by Theorem~\ref{thm:L}, this trajectory set satisfies $(L)$-a.e. Since all trajectories $S^{+,n}$ pass through the arbitrage node $(S^{+,1},1)$ of type II, $\mathcal{S}^+$ is a null set by Lemma \ref{nullityOfN_j} and, hence, $\bar I({\bf 1_{ \mathcal{S}^-}})=1$ by Remark \ref{L0ForNonTrivialityOfComplement}.

It is also clear that there is no probability measure $Q$ on the power set of $\mathcal{S}$, which turns the coordinate process $T_j:\mathcal{S}\rightarrow \mathbb{R}$, $S\mapsto S_j$ into a martingale. Otherwise $Q(\mathcal{S}^+)=0$ (because any martingale measure assigns probability zero to type II arbitrage nodes) and then $T_1<T_0$ $Q$-almost surely -- a contradiction. 

Consequently, by Example \ref{ex:supermart}--a), the coordinate process $(T_j)_{j\geq 0}$ is an example of a trajectorial martingale, which fails to be a martingale in the classical probabilistic setup.
\\[0.2cm]
b) We next construct a supermartingale, for which a decomposition as in Theorem \ref{thm:decomposition}--(ii) is not possible, if we let $\delta_j\equiv 0$ (and, hence, the small $\delta$-errors cannot be avoided in the formulation of the supermartingale decomposition theorem).

To this end, we define $ f_j:\mathcal{S} \rightarrow \mathbb{R}$ via
$$
f(S)=\begin{cases}
	0, & S \in \mathcal{S}^+ \\ \frac{1}{n}, & S=S^{-,n}
\end{cases}, \quad \quad 
f_j(S)=\begin{cases}
	0, & j=0 \\ f(S), & j\geq 1,
\end{cases}
$$
and consider the sequence $(f_j)_{j\geq 0}$.  Since all of its `paths' $j\mapsto f_j(S)$ are nondecreasing, $(f_j)$ obviously is a submartingale.
We claim that $(f_j)_{j\geq 0}$ is a also supermartingale (despite of the nondecreasing paths)  and apply Theorem \ref{thm:decomposition} to verify this. Given a sequence $(\delta_j)_{j\geq 0}$ of positive reals, let 
$$
H_0\equiv -\lceil {\delta_0}^{-1} \rceil,\quad H_j\equiv 0, \; j\geq0,
$$
and note that, for every $n\in \mathbb{N}$,
$$
\delta_0+H_0(S^{-,n}_1-S^{-,n}_0)= \delta_0 + \lceil {\delta_0}^{-1} \rceil \; \frac{1}{n^2} \geq \frac{1}{n}=f_1(S^{-,n}),
$$
by considering the cases $n\leq \lceil {\delta_0}^{-1} \rceil$ and $n>\lceil {\delta_0}^{-1} \rceil$ separately. Hence, we may define a nondecreasing, nonanticipating sequence $(A_j)$ via $A_0\equiv 0$ and
$$
A_{j}(S)-A_{j-1}(S)=\begin{cases}  \delta_0 + \lceil {\delta_0}^{-1} \rceil \cdot \frac{1}{n^2} - \frac{1}{n}, & j=1 \textnormal{ and } S=S^{-,n}  \\ \delta_{j-1}, & j\geq 2 \textnormal{ or } S\in \mathcal{S}^+.  \end{cases}
$$
It is then straightforward to check that, for every $i\geq 0$ and $S\in \mathcal{S}^-$,
\begin{equation*}\label{eq:ex2}
	f_i(S)=f_0+\sum_{j=0}^{i-1}H_j(S)\Delta_jS-A_i(S)+\sum_{j=0}^{i-1}\delta_j.
	\end{equation*}
Hence, $(f_j)$ is a supermartingale by Theorem \ref{thm:decomposition}. Note that, in view of Remark \ref{rem:supermartingale}, $(f_j)_{j\geq 0}$ is a martingale.

We finally prove that a decomposition as in Theorem \ref{thm:decomposition}--(ii) is not possible for this (super-)martingale $(f_j)$, if we let $\delta_j\equiv 0$. We first show that for every $H_0\in \mathbb{R}$ there is an $n_0\in \mathbb{N}$ such that for every $n\geq n_0$
$$
f_1(S^{-,n})>f_0+H_0(S^{-,n}_1-S^{-,n}_0).
$$
By inserting the definition of $f_1$ and $f_0$, this inequality is equivalent to 
$$
\frac{1}{n}\left(1-\frac{H_0}{n} \right)>0,
$$ 
which trivially holds for sufficiently large $n$. Hence, no matter of the choice of $(H_j)$ and $(A_j)$, a decomposition as in Theorem \ref{thm:decomposition}--(ii) with $\delta_0= 0$ will necessarily fail at $i=1$ on the set $\mathcal{S}^{-,\geq n_0}=\{S^{-,n}|n\geq n_0\}$ for some $n_0\in \mathbb{N}$. It, thus, remains to show that $\bar I({\bf 1}_{\mathcal{S}^{-,\geq n_0}})>0$ for every $n_0\in \mathbb{N}$.
To this end, assume that for every $S\in \mathcal{S}$,
$$
{\bf 1}_{\mathcal{S}^{-,\geq n_0}}(S) \leq  \sum_{m \geq 1} \liminf_{k\to \infty}~~\Pi_{0, k}^{V^m, H^m}(S)
$$
where $\Pi_{0, k}^{V^m, H^m}\in \mathcal{E}_0^+$ for all $k\geq 0$ and $\sum_{m\geq 1} V_m<\infty$. Since the trajectories in $\mathcal{S}^-$ are constant after time $k=1$, the previous inequality and the Aggregation Lemma \ref{convergenceOfPortfolioCoordinates} imply
$$
1\leq \sum_{m \geq 1} \Pi_{0, 1}^{V^m, H^m}(S^{-,n})=\sum_{m\geq 1} (V^m-H^m_0\frac{1}{n^2})= \sum_{m\geq 1} V_m -\frac{1}{n^2} \sum_{m\geq 1} H^m_0
$$
for every $n\geq n_0$. Passing with $n$ to infinity, we observe that $\sum_{m\geq 1} V_m\geq 1$ and, hence, $\bar I({\bf 1}_{\mathcal{S}^{-,\geq n_0}})=1$ for every $n_0\in \mathbb{N}$.

The foregoing also shows that $(f_j)_{j\geq 0}$ is an example of a martingale, which is not a.e. equal to a `simple' martingale of the form discussed in Example \ref{ex:supermart}--a).
\\[0.2cm]
c) We finally show that for $f$ (defined in b)): $\overline\sigma f=0 <1/2=\overline If$.

Since $(f_j)$ is a martingale, we obtain $0=f_0=\overline \sigma f_1=\overline \sigma f$. For the claim concerning $\overline I $, it clearly suffices to show: If $\overline I f<\epsilon$ for some $\epsilon>0$, then $\epsilon\geq 1/2$. If the hypothesis is satisfied, then, by the Aggregation Lemma \ref{convergenceOfPortfolioCoordinates}  at time 0, there is a constant $H_0$ such that $\epsilon +H_0(S_1-S_0)\geq 0$ for every $S\in \mathcal{S}$ and  $\epsilon +H_0(S_1-S_0)\geq f(S)$, whenever $S=S^{-,n}$ (because the trajectories in the down-branch stay constant after time 1). If we apply these inequalities to the trajectories $S^{-,1}$ and $S^{+,1}$, we obtain $\epsilon-H_0\geq 1$ and $\epsilon+H_0\geq 0$. Hence, $\epsilon\geq 1/2$.
\end{ej}

\begin{remark}
Consider the following variant	of Example \ref{exmp:no_martingale_measure}. Let $\mathcal{S}=\mathcal{S}^+\cup\{S^0,S^-\}$, where the up-branch $\mathcal{S}^+$ is as in Example \ref{exmp:no_martingale_measure}, $S^0\equiv 1$, $S^{-}_0=1$ and $S^-_j=0$ for $j\geq 1$. Adapting the arguments in Example \ref{exmp:no_martingale_measure}, one can check that: 1) The point mass $Q$ on $S^0$ is the unique martingale measure of this model; 2) $(L)$-a.e. holds; 3) $\bar I({\bf 1}_{\{S^0\}})=1$ and  $\bar I({\bf 1}_{\{S^-\}})=1/2$. 

Hence $\{S^-\}$ is a null set for $Q$, but not w.r.t. $\bar I$. In such a situation our supermartingale decomposition (Theorem \ref{thm:decomposition}) holds on a larger set than the (uniform) Doob decomposition in the classical theory.
\end{remark}

\begin{remark} \label{incompleteExample}
We may also consider the following variant of Example \ref{exmp:no_martingale_measure}, replacing the `sure' arbitrage at the type II arbitrage node $(S^+,1)$ by a `sure' arbitrage opportunity by trading up to unbounded time. To this end, we replace the up-branch of the model by $\tilde{\mathcal{S}}^+=\{\tilde S^{+,n}|n\in \mathbb{N} \}$, where now 
$$
\tilde S^{+,n}_i=\begin{cases} 1, & i=0 \\ 2, & i=1<n+1 \\ 4, & i\geq n+1 \end{cases},
$$
and consider the trajectory set $\tilde{\mathcal{S}}^+\cup \mathcal{S}^-$, with the lower branch $\mathcal{S}^-$ defined as in Example \ref{exmp:no_martingale_measure}. This modified trajectory set fails to be trajectorial complete, since $(1,2,2,2,\ldots)\notin \tilde{\mathcal{S}}^+\cup \mathcal{S}^-$, but satisfies $(L)$-a.e. by Corollary \ref{cor:L-a.e.}. Note that all potential losses at the node $(\tilde S^{+,1},1)$ by trading between time $0$ and $1$  can be  recuperated by buying the stock (at all times $i\geq 1$) and waiting until the stock price eventually increases to 4. All conclusions of Example \ref{exmp:no_martingale_measure} are easily seen to remain valid for this variant of the trajectory set. Note, however, that the completion of this trajectory set, i.e. adding the trajectory $(1,2,2,2,\ldots)$, drastically changes the model. Recuperating losses after time 1, is not possible anymore in the completed model, because the stock price may continue constantly after time 1. For this reason, the completed up-branch is not a null set anymore and the completed model has infinitely many martingale measures (all of which assigning positive probability on the `new' trajectory $(1,2,2,2,\ldots)$).   
\end{remark}

We finally present an example, in which $(L)$-a.e. fails, and demonstrate the importance of this assumption for our results.
\begin{ej}
	Let $\mathcal{S}=\{S^{+,-}, S^0, S^-\}\cup\{S^{+,n}|\, n\in \mathbb{N}\}$, where $S_0=1$ for every $S\in \mathcal{S}$,
	$$
	S_1=\begin{cases} 1, & S=S^0 \\ 2, & S\in \{S^{+,n}, S^{+,-}|\,n \in \mathbb{N}\} \\  0, & S=S^- \end{cases},\quad 	S_2=\begin{cases} S_1, & S\in \{S^0, S^-\} \\ 3, & S\in \{S^{+,n}|\,n \in \mathbb{N}\} \\  3/2, & S=S^{+,-} \end{cases}
	$$
	and 
	$$
		S_j=\begin{cases} S_2 , & S\in \{S^0, S^-, S^{+,-}\}  \\ 3+1/n, & S=S^{+,n} \end{cases},\quad j\geq 3.
	$$
	\
	This trajectory space is illustrated in the following figure:
	\
	\[
	{\setlength{\unitlength}{.08 cm} 
		\begin{picture}(80,85)
			\multiput(0,0)(0,1){80}{\circle*{.15}}
			\multiput(0,0)(1,0){85}{\circle*{.25}}
			\multiput(0,80)(1,0){85}{\circle*{.25}}
			\put(-4,0){\small $0$}\put(-4,19){\tiny $1-$}
			\put(-4,39){\tiny $2-$}\put(-4,59.05){\tiny $3-$}\put(-5,69){\tiny ${\frac72}-$}
			\put(-6,64){\tiny ${\frac{10}3}-$}\put(-4,79.05){\tiny $4-$}
			\put(0,20){\circle*{1}}\put(0,20){\color{red}\line(1,1){20}}\put(20,40){\circle*{1}}
			\put(20,40){\color{red}\line(1,1){20}}\put(40,60){\color{red}\line(1,1){20}}\put(40,60){\circle*{1}}
			\put(60,80){\color{red}\line(1,0){25}}\multiput(60,80)(20,0){2}{\circle*{1}}\put(86,80){\tiny $S^{+,1}$}
			\put(40,60){\color{red}\line(2,1){20}}\multiput(60,70)(20,0){2}{\circle*{1}}\put(60,80){\color{red}\line(1,0){25}}
			\put(60,70){\color{red}\line(1,0){25}}\put(86,70){\tiny$S^{+,2}$}
			\put(40,60){\color{red}\line(3,1){20}}\multiput(60,66.5)(20,0){2}{\circle*{1}}\put(60,66.5){\color{red}\line(1,0){25}}
			\put(86,66.5){\tiny$S^{+,3}$}
			\put(59.5,60){\scalebox{.6}{$\vdots$}} \put(79.5,60){\scalebox{.6}{$\vdots$}} 
			\put(86,60){\scalebox{.6}{$\vdots$}} 
			\put(20,40){\color{blue}\line(2,-1){20}}\multiput(40,30)(20,0){3}{\circle*{1}}\put(40,30){\color{blue}\line(1,0){45}}
			\put(86,30){\tiny$S^{+,-}$}
			\put(0,20){\color{green}\line(1,0){85}}\multiput(20,20)(20,0){4}{\circle*{1}}\put(86,20){\tiny$S^{0}$}
			\put(0,20){\color{blue}\line(1,-1){20}}\put(20,0){\color{blue}\line(1,0){65}}\multiput(20,0)(20,0){4}{\circle*{1}}
			\put(86,0){\tiny $S^{-}$}
			\put(-2,-5){\tiny $0$}\put(19,-5){\tiny $1$}\put(39,-5){\tiny $2$}\put(59,-5){\tiny $3$}\put(79,-5){\tiny $4$}
	\end{picture}}
	\]
	\
	\\[0.1cm]	
	Note that $(L)$ holds, because the constant trajectory $S^0$ is included in the trajectory set. \\[0.1cm]
	a) We first show that $(L_{(S^{+,-},1)})$ fails and $\bar I({\bf 1}_{\{S^{+,-}\}})\geq 1/6$, and, thus, condition $(L)$-a.e. is violated.
	
	Suppose $f:\mathcal{S}_{(S^{+,-},1)} \rightarrow \mathbb{R}$. Define, for arbitrary, but fixed $k\in \mathbb{N}$,
	$$
	g_0(S)=-k-2(f(S^{+,-})+k)(S_2-S_1),\quad g_{m}(S)=S_3-S_2,\quad S\in \mathcal{S}_{(S^{+,-},1)} ,\;m\geq 1.
	$$
	Then, $g_0 \in \mathcal{E}_{(S^{+,-},1)}$, $g_m\in \mathcal{E}^+_{(S^{+,-},1)}$ for $m\geq 1$, and, on $\mathcal{S}_{(S^{+,-},1)}$,
	$$
	\sum_{m=0}^\infty g_m =f(S^{+,-}) + \infty\,{\bf 1}_{\{S^{+,n}|\, n\in \mathbb{N}\}}\geq f.
	$$
	Since $k$ was arbitrary, we obtain $\overline{\sigma}_1f(S^{+,-})=-\infty$. By choosing $f\equiv 0$, we observe that  $(L_{(S^{+,-},1)})$ fails. 
	
	We next show that  $\bar I({\bf 1}_{\{S^{+,-}\}})\geq 1/6$. 
	To this end, assume that for every $S\in \mathcal{S}$,
	$$
	{\bf 1}_{\{S^{+,-}\}}(S) \leq  \sum_{m \geq 1} \liminf_{k\to \infty}~~\Pi_{0, k}^{V^m, H^m}(S)
	$$
	where $\Pi_{0, k}^{V^m, H^m}\in \mathcal{E}_0^+$ for all $k\geq 0$ and $\sum_{m\geq 1} V_m<\infty$. Since $S^{+,-}$ stays constant after time 1 and the  $\Pi_{0, k}^{V^m, H^m}$'s are nonnegative at any time, we obtain
	\begin{equation}\label{eq:hilf0009}
		{\bf 1}_{\{S^{+,-}\}}(S) \leq  \sum_{m \geq 1} \Pi_{0, k}^{V^m, H^m}(S)
	\end{equation} 
	for every $k\geq 1$ and $S\in \mathcal{S}.$	
	 We now write $v=\sum_{m\geq 1} V_m$, $a=\sum_{m\geq 1} H^m_0$ and $b=\sum_{m\geq 1} H^m_1(S^{+,-})$ and note that the series defining $a$ and $b$ converge in $\mathbb{R}$ by the Aggregation Lemma \ref{convergenceOfPortfolioCoordinates}. Inserting the pairs $S=S^{+,-}$, $k=1$; $S=S^{+,1}$, $k=2$; $S=S^-$, $k=1$ into \eqref{eq:hilf0009} yields
	 $$
	 (I): \,v+a-b/2\geq 1,\quad (II):\,v+a+b\geq 0,\quad (III):\,v-a\geq 0.
	 $$
	 Considering $2(I)+(II)+3(III)$, we observe that $v\geq 1/6$ and, hence, $\bar I({\bf 1}_{\{S^{+,-}\}})\geq 1/6$. Note that by similar, but easier arguments $\bar I({\bf 1}_{\{S^{-}\}})\geq 1/2$ and $\bar I({\bf 1}_{\{S^{0}\}})=1$. 
	 \\[0.1cm]
	 b) We now define the sequence $(f_j)_{j\geq 0}$ via $f_0=1$ and  $f_j\equiv f_\infty={\bf 1}_{\{S^0\}}+2 \;{\bf 1}_{\mathcal{S}\setminus \{S^0\}}$ for $j\geq 1$.
	 We claim that $(f_j)_{j\geq 0}$ is a supermartingale, which does not have a decomposition as in Theorem \ref{thm:decomposition}--(ii) for sequences $(\delta_j)$ with $0<\delta_0<1$. In particular, the assumption $(L)$-a.e. cannot be dropped in the latter theorem.
	 
	 In our considerations, we may ignore the null set $\{S^{+,n}|\, n\in \mathbb{N}\}$ of those trajectories, which pass through the arbitrage node $(S^{+,1},2)$ of type II. As the computation of $\overline \sigma_j$ is trivial after trajectories have become constant, we get
	 $$
	 \overline \sigma_j f_{j+1}(S)=\overline \sigma_j f_{\infty}(S)=f_{\infty}(S)=f_{j}(S)
	 $$
	 for $S\in \{S^0,S^-\}$ and $j\geq 1$ and for $S=S^{+,-}$ and $j\geq 2$. Moreover, by a), $$ \overline{\sigma}_1f_2(S^{+,-})=-\infty\leq f_1(S^{+,-}).$$ 
	 For the supermartingale property at the initial node, consider
	 $$
		g_0(S)=1-(S_2-S_1)-4(S_2-S_1),\quad g_{m}(S)=S_3-S_2,\quad S\in\mathcal{S}, \;m\geq 1.
	 $$
	 Then, $g_0 \in \mathcal{E}$, $g_m\in \mathcal{E}^+$ for $m\geq 1$, and, on $\mathcal{S}$,
	 $$
	 \sum_{m\geq 0} g_m=f_1\, {\bf 1}_{\{S^0,S^-,S^{+,-}\}}+ \infty\,{\bf 1}_{\{S^{+,n}|\,n\in \mathbb{N}\}}\geq f_1.
	 $$
	 Hence $ \overline \sigma f_1 \leq 1$. (Since the trajectory $S^0$ is constant,  we obviously also obtain $ \overline \sigma f_1\geq f_1(S^0)=1$, i.e., $ \overline \sigma f_1 = 1$.)
	 
	 We now fix a sequence $(\delta_j)_{j\geq 0}$ of positive reals and assume existence of a representation for $(f_j)$ as in Theorem \ref{thm:decomposition}--(ii). Then, at time $i=1$,
	 $$
	 f_1(S)\leq (1+\delta_0)+H_0(S_1-S_0)
	 $$
	 for $S\in \{S^0,S^-, S^{+,-}\}$ (where $H_0$ is a constant), because none of the singletons $\{S\}$, $S\in \{S^0,S^-, S^{+,-}\}$, is a null set by a). This leads to the three inequalities
	 $$
	 1\leq (1+\delta_0),\quad 2 \leq (1+\delta_0)-H_0,\quad  2 \leq (1+\delta_0)+H_0,
	 $$
	 which imply $\delta_0\geq 1$.
	 \if
	 c) We finally show that condition $(L)$-a.e. can neither be dropped in the strong tower property of Corollary \ref{cor:tower}.
	 
	 We choose $f\equiv 1$, $\rho\equiv 0$, and $\tau\equiv 1$. Since $(L)$ holds, we obtain 
	 $\overline{\sigma}_\rho f(S)=\overline{\sigma} 1=1= \underline{\sigma} 1=\underline{\sigma}_\rho f(S)$ for every $S\in \mathcal{S}$ by Proposition \ref{Properties_L}, and, in particular, $\overline{\sigma}_\rho f\leq  \underline{\sigma}_\rho f$ a.e. Hence, (apart from $(L)$-a.e.), all assumptions of Corollary \ref{cor:tower} are satisfied. Since the trajectories stay constant after passing each of the nodes $(S^0,1)$ and $(S^-,1)$, we observe, thanks to a), that
	 $$
	 \overline{\sigma}_1f={\bf 1}_{\{S^0,S^-\}}-\infty{\bf 1}_{\mathcal{S}\setminus\{S^0,S^-\}}.
	 $$
	 Then, $\overline{\sigma}(-\overline{\sigma}_1f)=+\infty$. Indeed, if otherwise
	 $$
	 -\overline{\sigma}_1f(S) \leq  \sum_{m \geq 0} \liminf_{k\to \infty}~~\Pi_{0, k}^{V^m, H^m}(S)
	 $$
	 for every $S\in \mathcal{S}$, where $\Pi_{0, n_0}^{V^0, H^0}\in \mathcal{E}_0$, $\Pi_{0, k}^{V^m, H^m}\in \mathcal{E}_0^+$ for all $k\geq 0$ and $\sum_{m\geq 1} V_m<\infty$, then, 
	 by the Aggregation Lemma \ref{convergenceOfPortfolioCoordinates} at times $0$ and $1$, there are real constants $a,b$ such that
	 \begin{eqnarray*}
	 	+\infty=  -\overline{\sigma}_1f(S^{+,-})\leq \left( \sum_{m\geq 0} V_m\right)+a(S^{+,-}_1-S^{+,-}_0)+b(S^{+,-}_2-S^{+,-}_1)\in \mathbb{R},
	 \end{eqnarray*}
 a contradiction. Hence, for every $S\in \mathcal{S}$,
 $$
 \underline{\sigma}_\rho(\overline{\sigma}_\tau f)(S)=-\overline{\sigma}(-\overline{\sigma}_1 f)=-\infty \neq 1=\overline{\sigma}_\rho f(S),
 $$
 i.e., the assertion of Corollary \ref{cor:tower} is not valid for this example. 
 \fi
\end{ej}

\appendix

\section{Partitions, Arbitrage Nodes, and Null Sets}\label{Property L}
The following Lemma shows that property $(L_{(S,j)})$ fails at type II arbitrage nodes $(S,j)$.

\begin{lemma} \label{priceMinusInfinity}
	Given a trajectory set $\mathcal{S}$ consider  a  node $(S,j)$, $j \geq 0$, then:
	If $(S,j)$ is a type II arbitrage node, then
	\begin{equation}\nonumber 
		\overline{\sigma}_jf (S) = - \infty~\mbox{for any}~~f \in Q.
	\end{equation}
\end{lemma}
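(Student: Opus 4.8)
The plan is to exploit the defining feature of a type II arbitrage node, namely that the one-step increment $\tilde S_{j+1}-S_j$ keeps a fixed strict sign over all $\tilde S\in\Se_{(S,j)}$, and then to superhedge \emph{any} $f\in Q$ on $\Se_{(S,j)}$ at arbitrarily negative cost by means of a countable family of one-step strategies. First I would establish the sign statement. Since $(S,j)$ is an arbitrage node it is in particular not an up-down node, so \eqref{upDownProperty} fails; that is, $\sup_{\tilde S\in\Se_{(S,j)}}(\tilde S_{j+1}-S_j)\le 0$ or $\inf_{\tilde S\in\Se_{(S,j)}}(\tilde S_{j+1}-S_j)\ge 0$. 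Writing $\epsilon=-1$ in the first case and $\epsilon=1$ in the second, we obtain $\epsilon(\tilde S_{j+1}-S_j)\ge 0$ for every $\tilde S\in\Se_{(S,j)}$; and since $(S,j)$ is of type II no $\tilde S\in\Se_{(S,j)}$ satisfies $\tilde S_{j+1}=S_j$, so in fact
\[\epsilon(\tilde S_{j+1}-S_j)>0\qquad\text{for every }\tilde S\in\Se_{(S,j)}.\]

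Next, for an arbitrary prescribed $R\in\mathbb{R}$ I would construct an admissible covering as in Definition \ref{cond_integ_def} with total cost $R$. For each $m\ge 1$ let $H^m\in\He_{(S,j)}$ be the strategy holding $\epsilon$ units of the stock at time $j$ and nothing afterwards (i.e.\ $H^m_j\equiv\epsilon$, $H^m_i\equiv 0$ for $i>j$), with $V^m:=0$. Then $\Pi_{j,j}^{0,H^m}\equiv 0$ and, for $n\ge j+1$, $\Pi_{j,n}^{0,H^m}(\tilde S)=\epsilon(\tilde S_{j+1}-S_j)>0$, so $\Pi_{j,n}^{0,H^m}\in\Ee^+_{(S,j)}$ for all $n\ge j$ and $f_m:=\liminf_{n\to\infty}\Pi_{j,n}^{0,H^m}$ equals $\tilde S\mapsto\epsilon(\tilde S_{j+1}-S_j)$ on $\Se_{(S,j)}$. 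Because each $f_m$ is a fixed strictly positive number at each $\tilde S$, the series gives $\sum_{m\ge 1}f_m\equiv+\infty$ on $\Se_{(S,j)}$. Finally set $f_0:=\Pi_{j,j+1}^{R,0}\equiv R\in\Ee_{(S,j)}$ with initial endowment $V^0:=R$. Then $f\le +\infty=f_0+\sum_{m\ge 1}f_m$ on $\Se_{(S,j)}$, so by the definition of $\overline{\sigma}_j$ as an infimum, $\overline{\sigma}_j f(S)\le\sum_{m\ge 0}V^m=R$. Letting $R\to-\infty$ yields $\overline{\sigma}_j f(S)=-\infty$.

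I do not anticipate a genuine obstacle; the delicate points are purely bookkeeping. One must check that the constant function $R$ and the one-step strategies genuinely lie in $\Ee_{(S,j)}$ and $\Ee^+_{(S,j)}$ for \emph{every} $n\ge j$, including the degenerate value $n=j$ where $\Pi_{j,j}^{0,H^m}\equiv 0\ge 0$; and one must invoke the stated conventions ($R+\infty=\infty$, and $f(\tilde S)\le+\infty$ for $f\in Q$) so that the covering inequality remains valid even where $f$ takes the value $+\infty$. Note that neither property $(L)$, nor any integrability hypothesis, nor the Aggregation Lemma \ref{convergenceOfPortfolioCoordinates} is needed here, since the portfolio positions of the different copies are never added together.
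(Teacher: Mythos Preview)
Your proof is correct and follows essentially the same approach as the paper's: both exploit the strict one-step sign at a type II arbitrage node to build infinitely many identical nonnegative one-step portfolios with zero initial endowment, whose sum is $+\infty$ on $\Se_{(S,j)}$, and then add an arbitrary constant portfolio $V^0=R$ to drive the superhedging cost to $-\infty$. Your write-up is slightly more explicit than the paper's (you treat both sign cases at once via $\epsilon\in\{\pm1\}$ and verify the degenerate case $n=j$), but the argument is the same.
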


\begin{proof}
	We may consider the case when $\tilde{S}_{j+1} > S_j$ for all $\tilde{S} \in \mathcal{S}_{(S,j)}$. Take then, for all $m \geq 1$: $H^m_j(\tilde{S})=1$ and $H^m_i(\tilde{S})=0$
	for all $i >j$, $V^m =0$. Also, $H^0_i=0$ for all $i \geq j$; then, for any $V^0 \in \mathbb{R}$:
	\begin{equation} \nonumber
		f(\tilde{S}) \leq V^0 + \infty = V^0 + \sum_{m \geq 1} H^m_j(\tilde{S})~ \Delta_j \tilde{S}~~\hspace{.1in} \mbox{holds for any}~~\tilde{S} \in \mathcal{S}_{(S,j)}~~\mbox{and}~~f \in Q.
	\end{equation}
	Thus, the claim follows.
\end{proof}

In upcoming results it will be proved that trajectories passing through arbitrage nodes of type II form a null set. 
\vspace{.1in} \noindent Define for $j\ge 0$,
\[N^{\circ}_j \equiv \{S\in\Se: (S,j)\; \mbox{is an arbitrage node, and}\;\Delta_jS\neq 0\},\quad\]
\begin{equation} \label{someNullSets}
	N_k \equiv  \bigcup\limits_{j\ge k}N^{\circ}_j,~\mbox{for}~~ k \geq 0, ~~\mbox{and}~~~~N(S,j) \equiv N_j \cap \mathcal{S}_{(S,j)}~\mbox{for}~ j \geq 0.
\end{equation}
Notice that
$$
N_0=\mathcal{N}:=\mathcal{N}^{(I)}\cup \mathcal{N}^{(II)}=\{S\in\Se: \exists~~ j\ge 0 ~~~~~ \mbox{s.t.} \; (S,j)\; \mbox{is an arbitrage node and}\; S_{j+1}\neq S_j\}
$$
where the sets $\mathcal{N}^{(I)}, \mathcal{N}^{(II)}$ where introduced in the proof of Theorem \ref{thm:decomposition}.

It will be shown that $\mathcal{N}$ is a null set. Whenever $S\notin \mathcal{N}$ it follows that $S\notin N^{\circ}_j$ for any $j\ge 0$, therefore such node $(S,j)$ is:  flat, or up-down, or type I arbitrage node with $S_{j+1}=S_j$. On the other hand if $(S,j)$ is a type II arbitrage node then $\Se_{(S,j)}\subset N^{\circ}_j$. Moreover, it can be that $S\in N^{\circ}_j$, but $(S,k)$ is arbitrage free for some $k> j$.

\begin{definition}\label{indexSet}
	Since for any $j\ge 0$, $\Se$ is the disjoint union of $\Se_{(S,j)}$, let $\Lambda_j$ be an index set, such that for $\lambda \in \Lambda_j$ there exists $S^{\lambda}\in \Se$ such that
	\[
	\lambda \neq \lambda'\; \Rightarrow S^{\lambda'}\notin \Se_{(S^{\lambda},j)},\quad \Se =\bigcup_{\lambda \in \Lambda_j}\Se_{(S^{\lambda},j)},\quad \mbox{and}
	\]
	
	\vspace{-.15in}
	\[
	\mbox{if}~(S^{\lambda},j)~\mbox{is an arbitrage node then}\; |\Delta_jS^{\lambda}|>0.
	\]
	
	\vspace{.1in}For $\Gamma \subset \Lambda_j$ define $H^{\Gamma}=(H_i^{\Gamma})_{i\ge 0},\; H_i^{\Gamma}:\Se \rightarrow \mathbb{R}$ by
	\[
	H_i^{\Gamma}\equiv 0\;\;\mbox{if}\;\; i\neq j~;\quad H_j^{\Gamma}\equiv \mathbf{1}_{\Se^{\Gamma}}, \;\; \mbox{where}\;\; \Se^{\Gamma}\equiv \bigcup\limits_{\lambda \in \Gamma}\Se_{(S^{\lambda},j)}.
	\]
\end{definition}

$\mathbf{H}^{\Gamma}$ {\bf is non-anticipative:} Let $\tilde{S}_k=S_k,\; 0\le k\le i$. If $i\neq j$ then $H_i^{\Gamma}(\tilde{S})=H_i^{\Gamma}(S)=0$. For $i=j$, $S\in\Se_{(S^{\lambda},j)}$ iff $\tilde{S}\in\Se_{(S^{\lambda},j)}$ so $H_j^{\Gamma}(\tilde{S})=H_j^{\Gamma}(S)$.

\begin{lemma}\label{nullityOfN_j}
	Consider $j\ge 0$ and $0\le k\le j$, then $N^{\circ}_j$, thus also $N_j$, are conditionally null sets at any $(S,k)$.
\end{lemma}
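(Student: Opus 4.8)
\emph{Plan.} The idea is to estimate $\overline I_k\,\mathbf 1_{N^\circ_j}(S)$ directly from the definition: I will show that for every $\epsilon>0$ there is a countable family of nonnegative elementary portfolios, supported only at time $j$, whose total initial cost is $\epsilon$ and whose $\liminf$-sum dominates $\mathbf 1_{N^\circ_j}$ on $\Se_{(S,k)}$. Fix $S\in\Se$ and $0\le k\le j$. Since $j\ge k$, the disjoint partition $\{\Se_{(S^\lambda,j)}\}_{\lambda\in\Lambda_j}$ of $\Se$ from Definition~\ref{indexSet} restricts to a disjoint partition of $\Se_{(S,k)}$, so it is meaningful to split the level-$j$ arbitrage nodes lying inside $\Se_{(S,k)}$ by direction: let $\Gamma^+$ be the set of $\lambda\in\Lambda_j$ with $S^\lambda\in\Se_{(S,k)}$ for which $(S^\lambda,j)$ is an arbitrage node with $\inf_{\tilde S\in\Se_{(S^\lambda,j)}}(\tilde S_{j+1}-S^\lambda_j)\ge 0$ (``up''), and let $\Gamma^-$ be defined analogously with $\sup_{\tilde S\in\Se_{(S^\lambda,j)}}(\tilde S_{j+1}-S^\lambda_j)\le 0$ (``down''). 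Every arbitrage node is of exactly one of these two kinds and flat nodes are of neither, so, writing $\Se^{\Gamma^\pm}=\bigcup_{\lambda\in\Gamma^\pm}\Se_{(S^\lambda,j)}$ as in Definition~\ref{indexSet}, the sets $\Se^{\Gamma^+}$, $\Se^{\Gamma^-}$ are disjoint and $N^\circ_j\cap\Se_{(S,k)}=\{\tilde S\in\Se^{\Gamma^+}\cup\Se^{\Gamma^-}:\Delta_j\tilde S\ne 0\}$.

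The key point is that $|\Delta_j\tilde S|$ may be arbitrarily small on these nodes (in particular at type~I arbitrage nodes), so a single portfolio cannot dominate an indicator; one needs a geometric amplification. For $m\ge 1$ set $V^m\equiv\epsilon\,2^{-m}$ and define $H^m=(H^m_i)_{i\ge k}$ by $H^m_i\equiv 0$ for $i\ne j$ and $H^m_j\equiv 2^m\big(\mathbf 1_{\Se^{\Gamma^+}}-\mathbf 1_{\Se^{\Gamma^-}}\big)$, which is non-anticipative on $\Se_{(S,k)}$ (a linear combination of the non-anticipative functions exhibited after Definition~\ref{indexSet}). Then $\Pi^{V^m,H^m}_{k,n}(\tilde S)=V^m$ for $k\le n\le j$, while for $n>j$
\[
\Pi^{V^m,H^m}_{k,n}(\tilde S)=V^m+2^m\big(\mathbf 1_{\Se^{\Gamma^+}}(\tilde S)-\mathbf 1_{\Se^{\Gamma^-}}(\tilde S)\big)\Delta_j\tilde S=V^m+2^m\,|\Delta_j\tilde S|\,\mathbf 1_{\Se^{\Gamma^+}\cup\Se^{\Gamma^-}}(\tilde S),
\]
using $\Delta_j\tilde S\ge 0$ on $\Se^{\Gamma^+}$ and $\Delta_j\tilde S\le 0$ on $\Se^{\Gamma^-}$. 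Hence $\Pi^{V^m,H^m}_{k,n}\in\Ee^+_{(S,k)}$ for every $n\ge k$, and $\liminf_{n\to\infty}\Pi^{V^m,H^m}_{k,n}(\tilde S)=V^m+2^m\,|\Delta_j\tilde S|\,\mathbf 1_{\Se^{\Gamma^+}\cup\Se^{\Gamma^-}}(\tilde S)$.

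Summing over $m\ge 1$ and using the conventions $\sum_{m\ge1}2^m=\infty$ and $0\cdot\infty=0$, for every $\tilde S\in\Se_{(S,k)}$,
\[
\sum_{m\ge 1}\liminf_{n\to\infty}\Pi^{V^m,H^m}_{k,n}(\tilde S)=\epsilon+\mathbf 1_{\Se^{\Gamma^+}\cup\Se^{\Gamma^-}}(\tilde S)\,|\Delta_j\tilde S|\sum_{m\ge 1}2^m=\begin{cases}+\infty,&\tilde S\in N^\circ_j\cap\Se_{(S,k)},\\ \epsilon,&\text{otherwise,}\end{cases}
\]
which in both cases is $\ge\mathbf 1_{N^\circ_j}(\tilde S)$. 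Since $\sum_{m\ge1}V^m=\epsilon$, the definition of $\overline I_k$ yields $\|\mathbf 1_{N^\circ_j}\|_k(S)=\overline I_k\mathbf 1_{N^\circ_j}(S)\le\epsilon$; letting $\epsilon\downarrow 0$ shows that $N^\circ_j$ is a conditionally null set at $(S,k)$. For the ``thus also $N_j$'' part, recall that $N_j=\bigcup_{l\ge j}N^\circ_l$ by \eqref{someNullSets}, and each $N^\circ_l$ with $l\ge j\ge k$ is conditionally null at $(S,k)$ by the case just proved (applied with $l$ in place of $j$); countable subadditivity of $\overline I_k$ then gives that $N_j$ is conditionally null at $(S,k)$ as well.

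I do not expect a genuine obstacle here. The points that require care are the sign bookkeeping in the up/down split — choosing the portfolio position so that every $\Pi^{V^m,H^m}_{k,n}$ is nonnegative while the amplified payoffs diverge \emph{precisely} on $N^\circ_j$ — and the correct use of the extended-arithmetic conventions, together with the routine verification that for $j\ge k$ the level-$j$ partition restricts to a partition of $\Se_{(S,k)}$.
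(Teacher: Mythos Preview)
Your proof is correct and follows essentially the same idea as the paper's: trade at time $j$ in the direction of the one-sided arbitrage to obtain a nonnegative elementary payoff that is strictly positive on $N^\circ_j$, then sum countably many such payoffs to dominate the indicator. The paper's version is a touch leaner—it takes $V^m=0$ and repeats the \emph{same} portfolio for every $m$ (your $\epsilon$ and the $2^m$-amplification are unnecessary, since $\sum_{m\ge1}c=+\infty$ for any $c>0$), and it treats the up and down halves $N^{\circ,\pm}_j$ separately rather than combining them via the signed position $\mathbf 1_{\Se^{\Gamma^+}}-\mathbf 1_{\Se^{\Gamma^-}}$.
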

\begin{proof} Define $N^{\circ,+}_j\equiv\{S\in N^{\circ}_j: \Delta_jS>0\}$ and $\Lambda_j^+\equiv\{\lambda\in\Lambda_j: S^{\lambda}\in N^{\circ,+}_j\}$, and consider for $m\ge 1$, $f_m=\Pi^{0,H^{\Lambda_j^+}}_{k,j+1}\in \mathcal{E}_{(S,k)}^+$ for any $S\in\Se$. Then
	\[\mathbf{1}_{N^{\circ,+}_j}\le \sum_{m\ge 1}f_m,\;\;\mbox{which implies that}\; \|\mathbf{1}_{N^{\circ,+}_j}\|_k=0.\]
	In a similar way it is shown that $N^{\circ,-}_j\equiv\{S\in N^{\circ}_j: \Delta_jS<0\}$ is a conditionally null set at any $(S,k)$, consequently $N^{\circ}_j=N_j^{\circ,+}\cup N_j^{\circ,-}$ is also conditionally null.
\end{proof}

\if
Lemma \ref{sttoping_disjoint_union} below gives a representation of a trajectory space $\Se$ as a disjoint union of subspaces $\Se_{(S,\tau(S))}$ where $\tau$ is a stopping time. It also allows to define global portfolios by pasting local ones.
\begin{lemma}\label{sttoping_disjoint_union}
	Let $\tau$ be a  stopping-time on a trajectory space $\Se$, then
	\begin{enumerate}
		\item \label{disjoint union} For $S^1,S^2\in \Se$, $\Se_{(S^1,\tau(S^1))}$ and $\Se_{(S^2,\tau(S^2))}$ are equal or disjoint.
		\item \label{pasting portfolios} For $S\in\Se$ let $H^S=(H^S_i)_{i\ge 0}\in \He_{(S,\tau(S))}$ with $H^S_i=0$ for $0\le i<\tau(S)$, and $H^{\tilde{S}}_i=H^S_i$ if $\tilde{S}\in \Se_{(S,\tau(S))}$ and $i\geq \tau(S)$. Define ${H_i}:\Se \to \mathbb{R}$ by ${H_i}_{|\Se_{(S,\tau(S))}}=H^S_i$ for $i\ge 0$. Then $H=(H_i)_{i\ge 0}\in\He$.
	\end{enumerate}
\end{lemma}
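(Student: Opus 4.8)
The plan is to run the standard ``stopping times partition the trajectory space'' argument: first establish (1) directly from the defining property of a stopping time, and then read off (2) as a bookkeeping consequence of the partition structure plus non-anticipativity of the local portfolios $H^S$.

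For (1), I would argue by contradiction-avoidance: suppose $\Se_{(S^1,\tau(S^1))}$ and $\Se_{(S^2,\tau(S^2))}$ are not disjoint and pick $\hat S$ in their intersection. Since $\hat S_k=S^1_k$ for $0\le k\le \tau(S^1)$, the stopping time property applied to the pair $(S^1,\hat S)$ gives $\tau(\hat S)=\tau(S^1)$; the same argument with $S^2$ in place of $S^1$ gives $\tau(\hat S)=\tau(S^2)$. Writing $n$ for this common value, we get $S^1_k=\hat S_k=S^2_k$ for $0\le k\le n$, hence $S^2\in\Se_{(S^1,n)}$, and therefore, using the elementary identities for conditional spaces recorded right after the definition of $\Se$, $\Se_{(S^1,\tau(S^1))}=\Se_{(S^1,n)}=\Se_{(S^2,n)}=\Se_{(S^2,\tau(S^2))}$. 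Since trivially $S\in\Se_{(S,\tau(S))}$ for every $S$, this shows the family $\{\Se_{(S,\tau(S))}\}_{S\in\Se}$ partitions $\Se$, which is what is needed both to make sense of (2) and to be the ``disjoint union'' advertised in the surrounding text.

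For (2), well-definedness of $H_i$ is immediate from (1): each point of $\Se$ lies in exactly one block of the partition, and within a block the prescribed values agree by the standing hypothesis $H^{\tilde S}_i=H^S_i$ whenever $\tilde S\in\Se_{(S,\tau(S))}$ (noting $\Se_{(\tilde S,\tau(\tilde S))}=\Se_{(S,\tau(S))}$ by (1)). It then remains to verify non-anticipativity: given $\tilde S,\hat S\in\Se$ with $\tilde S_k=\hat S_k$ for $0\le k\le i$, show $H_i(\tilde S)=H_i(\hat S)$. I would first exclude the ``mixed'' configuration: if $i<\tau(\tilde S)$ but $i\ge\tau(\hat S)$, then $\tilde S_k=\hat S_k$ for $0\le k\le\tau(\hat S)$, so the stopping time property forces $\tau(\tilde S)=\tau(\hat S)\le i<\tau(\tilde S)$, a contradiction (and symmetrically for the reversed roles). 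Hence either $i<\tau(\tilde S)$ and $i<\tau(\hat S)$ simultaneously — in which case $H_i(\tilde S)=H^{\tilde S}_i(\tilde S)=0=H^{\hat S}_i(\hat S)=H_i(\hat S)$ by the hypothesis $H^S_i\equiv 0$ for $i<\tau(S)$ — or $i\ge\tau(\tilde S)$ and $i\ge\tau(\hat S)$, in which case $\tilde S_k=\hat S_k$ for $0\le k\le\tau(\tilde S)$ forces $\tau(\hat S)=\tau(\tilde S)=:n$ and $\hat S\in\Se_{(\tilde S,n)}$, so $H_i(\hat S)=H^{\tilde S}_i(\hat S)$; since $H^{\tilde S}\in\He_{(\tilde S,n)}$ is non-anticipative and $\tilde S_k=\hat S_k$ for $n\le k\le i$, this equals $H^{\tilde S}_i(\tilde S)=H_i(\tilde S)$.

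The only genuinely fiddly point is the case analysis in (2), specifically invoking the stopping time property to rule out the mixed positions of $i$ relative to $\tau(\tilde S)$ and $\tau(\hat S)$; once (1) supplies the partition, everything else is routine.
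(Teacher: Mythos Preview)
Your proof is correct and follows essentially the same approach as the paper's. The only cosmetic difference is bookkeeping: in both parts the paper assumes without loss of generality that $\tau(S^1)\le\tau(S^2)$, which collapses your explicit ``mixed configuration'' contradiction into a single case split, but the underlying use of the stopping time property and the non-anticipativity of $H^S$ is identical.
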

\begin{proof} For item (\ref{disjoint union}) assume there exists $S\in\Se_{(S^1,\tau(S^1))}\cap\Se_{(S^2,\tau(S^2))}$. Consider w.l.o.g. that $\tau(S^1)\le\tau(S^2)$. Then $S^1_i=S_i=S^2_i$ for $0\le i\le \tau(S^1)$, which implies hat $\tau(S^2)=\tau(S^1)$ and in consequence $\Se_{(S^1,\tau(S^1))}=\Se_{(S^2,\tau(S^2))}$.
	
	For item (\ref{pasting portfolios}), it is enough to prove that $H_i$ are non-anticipative for any $i\ge 0$. Assume $S^1_j=S^2_j$ for $0\le j\le i$ and w.l.o.g. $\tau(S^1)\le\tau(S^2)$.
	
	If $i<\tau(S^1)$ then $H_i(S^1)=H^{S^1}_i(S)=0=H^{S^2}_i(S^2)=H_i(S^2)$.
	
	While if $\tau(S^1)\le i$, then $\tau(S^2)=\tau(S^1)$ which leads to $S^2\in\Se_{(S^1,\tau(S^1))}$ and \\ $H_i(S^2)=H^{S^2}_i(S^2)=H^{S^1}_i(S^2)=H^{S^1}_i(S^1)=H_i(S^1)$.
\end{proof}
\fi

\section{Basic Properties of Superhedging Functional $\overline{\sigma}_j$}
\label{Paper 1}

\begin{proposition}[Basic Properties] \label{requiredProperties}
The following properties hold for $f,g \in Q$
\begin{itemize}
\item [a)] $\overline{\sigma}_jf(S)\le f(S)$ if $f$ is constant on  $\Se_{(S,j)}$. (Implies $\overline{\sigma}_j0\le 0$ and $\underline{\sigma}_jf\ge f$.)
\item [b)] $\overline{\sigma}_jf(S) \le \overline{\sigma}_jg(S)$, if $f\le g$ a.e. on $\mathcal{S}_{(S,j)}$.
\item [c)] $\overline{\sigma}_j[f+g]\le \overline{\sigma}_jf + \overline{\sigma}_jg$.
\item [d)] Let $f \in Q$,  $g \in P$, and $g$ is constant on $\mathcal{S}_{(S,j)}$ then
$\overline{\sigma}_j(g f)(S) \leq g \overline{\sigma}_j f(S)$.

\item [e)] Let $f \in P$ and $k \geq 0$ then $0 \leq \overline{I}(\overline{I}_k f)
\leq \overline{I}f$. Therefore is $f$ is a null function we get $\overline{I}_k f$ is a null function.
\end{itemize}
\end{proposition}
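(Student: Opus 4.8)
The plan is to verify a)--e) by directly unwinding the infimum defining $\overline{\sigma}_j$ (Definition~\ref{cond_integ_def}) and $\overline{I}_j$ (Definition~\ref{Iup_definition}), treating them in the order a), c), b), d), e), since b) will lean on a) and c). The only structural facts needed are: the zero portfolio $\Pi_{j,n}^{0,0}\equiv 0$ is an admissible term of cost $0$; $\mathcal{E}_{(S,j)}$ is a vector space, so the leading elementary terms of two coverings may be added; the admissible families are closed under countable concatenation; they are closed under multiplication by a constant $c\ge 0$ (nonnegativity and $c\liminf_n\Pi_{j,n}=\liminf_n c\Pi_{j,n}$ are both preserved); and every $\overline{I}_j$-covering is an admissible $\overline{\sigma}_j$-covering (take $f_0=0$), so $\overline{\sigma}_jh\le\overline{I}_jh$ for $h\in P$. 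Extended-real arithmetic is handled with the conventions fixed before Definition~\ref{Iup_definition}; in particular each inequality is trivial once a relevant right-hand quantity is $+\infty$, and the left-hand side is forced to $-\infty$ as soon as a relevant right-hand quantity is $-\infty$ (obtained by concatenating with a covering of arbitrarily negative cost).

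\emph{Items a), c), d).} For a): if $f$ is constant on $\mathcal{S}_{(S,j)}$, then $f_0\equiv\Pi_{j,j+1}^{f(S),0}$ together with $f_m\equiv 0$ ($m\ge 1$) covers $f$ on $\mathcal{S}_{(S,j)}$ at cost $f(S)$; hence $\overline{\sigma}_jf(S)\le f(S)$. Taking $f\equiv 0$ yields $\overline{\sigma}_j0\le 0$, and applying the bound to $-f$ and negating yields $\underline{\sigma}_jf(S)=-\overline{\sigma}_j(-f)(S)\ge f(S)$. For c): from a covering $(f_m)_{m\ge 0}$ of $f$ of cost $\sum_mV^m$ and a covering $(g_m)_{m\ge 0}$ of $g$ of cost $\sum_mW^m$ on $\mathcal{S}_{(S,j)}$, the concatenation $(f_0+g_0,f_1,g_1,f_2,g_2,\dots)$ is admissible (first term in $\mathcal{E}_{(S,j)}$, the rest $\liminf$'s of nonnegative elementary functions) and dominates $f+g$ at cost $\sum_mV^m+\sum_mW^m$; taking infima and invoking the arithmetic conventions gives c). For d): let $c=g(S)\ge 0$; if $c=0$ then $gf\equiv 0$ on $\mathcal{S}_{(S,j)}$, so a) gives $\overline{\sigma}_j(gf)(S)\le 0=c\,\overline{\sigma}_jf(S)$, while if $c>0$, scaling a covering of $f$ by $c$ produces an admissible covering of $cf$ of $c$ times the cost, whence $\overline{\sigma}_j(gf)(S)\le c\,\overline{\sigma}_jf(S)$.

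\emph{Item b).} Put $E=\{\hat S\in\mathcal{S}_{(S,j)}:f(\hat S)>g(\hat S)\}$; by hypothesis $\overline{I}_j\mathbf{1}_E(S)=\|\mathbf{1}_E\|_j(S)=0$. For each $\ell\ge 1$ pick an $\overline{I}_j$-covering of $\mathbf{1}_E$ on $\mathcal{S}_{(S,j)}$ of cost $<2^{-\ell}\varepsilon$ and concatenate them: the resulting admissible family dominates $\infty\,\mathbf{1}_E$ on $\mathcal{S}_{(S,j)}$ at total cost $<\varepsilon$, so $\overline{I}_j(\infty\,\mathbf{1}_E)(S)=0$ (the conditional form of the countable subadditivity in Proposition~\ref{propertiesOfIBarra}), and a fortiori $\overline{\sigma}_j(\infty\,\mathbf{1}_E)(S)\le 0$. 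Since $f\le g+\infty\,\mathbf{1}_E$ on $\mathcal{S}_{(S,j)}$ (on $E$ the right-hand side equals $+\infty$, using $\infty+(-\infty)=\infty$), parts c) and a) give $\overline{\sigma}_jf(S)\le\overline{\sigma}_jg(S)+\overline{\sigma}_j(\infty\,\mathbf{1}_E)(S)\le\overline{\sigma}_jg(S)$.

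\emph{Item e).} The bound $\overline{I}(\overline{I}_kf)\ge 0$ is immediate since $\overline{I}_kf\ge 0$. For the upper bound I would argue as in the proof of the tower inequality (Proposition~\ref{mainDirectionOfTowerProperty s-t}) at $j=0$. Let $f\le\sum_{m\ge 1}\liminf_n\Pi_{0,n}^{V^m,H^m}$ on $\mathcal{S}$ be an $\overline{I}$-covering, with $\Pi_{0,n}^{V^m,H^m}\in\mathcal{E}_0^+$ for all $n\ge 0$. Fix $\tilde S\in\mathcal{S}$ and split each portfolio path at time $k$: for $\hat S\in\mathcal{S}_{(\tilde S,k)}$ and $n\ge k$ one has $\Pi_{0,n}^{V^m,H^m}(\hat S)=\Pi_{k,n}^{\tilde V^m,\tilde H^m}(\hat S)$, where $\tilde V^m:=\Pi_{0,k}^{V^m,H^m}(\tilde S)\ge 0$ is constant on $\mathcal{S}_{(\tilde S,k)}$ and $\tilde H^m$ is the restriction of $(H^m_i)_{i\ge k}$ to $\mathcal{S}_{(\tilde S,k)}$; in particular $\Pi_{k,n}^{\tilde V^m,\tilde H^m}\in\mathcal{E}_{(\tilde S,k)}^+$ for every $n\ge k$, as it coincides there with $\Pi_{0,n}^{V^m,H^m}\ge 0$. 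Hence $f\le\sum_{m\ge 1}\liminf_n\Pi_{k,n}^{\tilde V^m,\tilde H^m}$ on $\mathcal{S}_{(\tilde S,k)}$ is an admissible $\overline{I}_k$-covering, so $\overline{I}_kf(\tilde S)\le\sum_{m\ge 1}\Pi_{0,k}^{V^m,H^m}(\tilde S)$; since $\tilde S$ was arbitrary, this holds on all of $\mathcal{S}$. Finally, writing $\bar H^m_i=H^m_i$ for $i<k$ and $\bar H^m_i=0$ for $i\ge k$, one has $\Pi_{0,n}^{V^m,\bar H^m}=\Pi_{0,\min(n,k)}^{V^m,H^m}\in\mathcal{E}_0^+$ for all $n$ and $\liminf_n\Pi_{0,n}^{V^m,\bar H^m}=\Pi_{0,k}^{V^m,H^m}$, so $(\liminf_n\Pi_{0,n}^{V^m,\bar H^m})_{m\ge 1}$ is an admissible $\overline{I}$-covering of $\overline{I}_kf$ of cost $\sum_{m\ge 1}V^m$; taking the infimum over the coverings of $f$ gives $\overline{I}(\overline{I}_kf)\le\overline{I}f$. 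The last claim follows at once: if $f$ is null then $0\le\|\overline{I}_kf\|=\overline{I}(\overline{I}_kf)\le\overline{I}f=\|f\|=0$. I expect the bookkeeping in e) --- the time-$k$ splitting of portfolios, the verification that the split pieces land in the right $\mathcal{E}^+$-classes, and the truncation $\bar H^m$ --- to be the main obstacle, with the ubiquitous $\pm\infty$ case distinctions in a), c), d) a close second.
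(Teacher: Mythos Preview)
Your proof is correct and matches the paper's approach: the paper declares a)--c) immediate, gives the scaling argument for d) (though applied in the reverse direction, dividing a covering of $gf$ by $c$ to obtain $c\,\overline{\sigma}_j f(S)\le \overline{\sigma}_j(cf)(S)$ and then noting equality for $c>0$, whereas you scale coverings of $f$ up by $c$ to get the stated inequality directly), and says e) is analogous to the tower inequality of Proposition~\ref{mainDirectionOfTowerProperty s-t} --- exactly what you spell out with the time-$k$ splitting and the truncation $\bar H^m$. Your treatment of b) via $\infty\,\mathbf{1}_E$ correctly fills in the detail the paper omits for the conditional-a.e. hypothesis; the reference to ``a)'' there appears superfluous (you use pointwise monotonicity, which is immediate from the definition, together with c)).
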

\begin{proof}
The proofs are immediate but we do indicate the arguments for item $d)$.

Let $c \geq 0$ be a constant such that $g(\hat{S}= c$ for all $\hat{S} \in \mathcal{S}_{(S,j)}$. If $c=0$  $\overline{\sigma}_j(g f)(S)= \overline{\sigma}_j(0)(S) \leq g \overline{\sigma}_j 0(S)$ (already covered by item $a)$. So assume $c >0$

Let $gf(\tilde{S}) \le \sum\limits_{m\ge 0}\liminf\limits_{n \rightarrow \infty}\Pi_{j, n}^{V^m, H^m}(\tilde{S}),\; \tilde{S}\in\Se_{(S,j)}$,
with $\Pi_{j, n_0}^{V^0, H^0}\in \mathcal{E}_{(S,j)}$ and, for $m\ge 1$, $\Pi_{j, n}^{V^m, H^m}\in \mathcal{E}^+_{(S,j)}$ for all $n\ge 0$.
For each $\tilde{S} \in \Se_{(S,j)}$ and ~~ $m\ge 0$ ~~~~ define
\[
U^m(\tilde{S}) = \frac {V^m}{g(S)},\;\;\mbox{and}\;\; G^m_i(\tilde{S})=\frac {H^m_i(\tilde{S})}{g(S)},\;\mbox{for}\; i\ge j.
\]
It follows that $f(\tilde{S})\le \sum\limits_{m\ge 0}\liminf\limits_{n \rightarrow \infty}\Pi_{j, n_m}^{U^m, G^m}(\tilde{S}),\; \tilde{S}\in\Se_{(S,j)}$, with $\Pi_{j, n_0}^{U^0, G^0}\in \mathcal{E}_{(S,j)}$, and for $m\ge 1$, $\Pi_{j, n}^{U^m, G^m}\in \mathcal{E}^+_{(S,j)}$ for all $n\ge 0$. Thus
\[
\overline{\sigma}_jf(S) \le \frac{\overline{\sigma}_j[gf](S)}{g(S)}.
\]
Notice that one actually obtains $\overline{\sigma}_j(g f)(S) = g \overline{\sigma}_j f(S)$
if $g=c>0$.

\vspace{.1in}
We also note that the proof of item $e)$ is analogous to the one of
Proposition\ref{mainDirectionOfTowerProperty s-t}.
\end{proof}

\begin{corollary}\label{cor:leq_ae}
	Suppose $f,g\in Q$. If $f\leq g$ a.e., then $\overline{\sigma}_j f \leq 	\overline{\sigma}_j g$ a.e.
\end{corollary}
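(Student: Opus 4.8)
The plan is to reduce the claim to the pointwise monotonicity of $\overline{\sigma}_j$ recorded in Proposition \ref{requiredProperties}--b), the point being to upgrade a \emph{global} null set to a set that is \emph{conditionally} null at almost every node. So first I would put $N\equiv\{S\in\Se: f(S)>g(S)\}$. By the hypothesis $f\le g$ a.e., together with isotonicity of $\overline{I}$ (Proposition \ref{propertiesOfIBarra}) and Proposition \ref{leinertTheorem}, the set $N$ is a null set; equivalently, $\mathbf{1}_N$ is a null function.

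Next I would apply Proposition \ref{requiredProperties}--e) with the function $\mathbf{1}_N\in P$ and index $k=j$, which gives that $\overline{I}_j\mathbf{1}_N$ is again a null function, i.e.\ $\|\overline{I}_j\mathbf{1}_N\|=0$. By Proposition \ref{leinertTheorem}--(1) this is the same as $\overline{I}_j\mathbf{1}_N=0$ a.e., so I can fix a null set $M\subset\Se$ such that $\overline{I}_j\mathbf{1}_N(S)=0$ for every $S\notin M$. Since $\mathbf{1}_N\ge 0$ we have $\|\mathbf{1}_N\|_j(S)=\overline{I}_j\mathbf{1}_N(S)$, and since $\overline{I}_j$ only depends on the restriction to $\Se_{(S,j)}$, this says that $N\cap\Se_{(S,j)}$ is conditionally null at $(S,j)$ for each $S\notin M$. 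By Definition \ref{nullObjects} this is exactly the statement that $f\le g$ a.e.\ on $\Se_{(S,j)}$ for each $S\notin M$.

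Finally, for every $S\notin M$, Proposition \ref{requiredProperties}--b) yields $\overline{\sigma}_jf(S)\le\overline{\sigma}_jg(S)$, and since $M$ is a null set this is the asserted inequality a.e.

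I do not expect a real obstacle here: the only substantive step is moving from the null set $N$ to a conditionally null set at a.e.\ node, and that is precisely the content already packaged in Proposition \ref{requiredProperties}--e). The one thing to be careful about is that the exceptional node set $M$ must be chosen independently of any covering/representation used to evaluate $\overline{\sigma}_j f$ or $\overline{\sigma}_j g$ — which is automatic, because $\overline{I}_j\mathbf{1}_N$ is a single, fixed function on $\Se$ and $M$ is its (fixed) zero set up to a null set.
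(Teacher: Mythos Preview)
Your proof is correct and follows essentially the same line as the paper's: both hinge on Proposition \ref{requiredProperties}--e) to pass from a global null function to conditional nullity at almost every node. The only difference is organizational --- you invoke Proposition \ref{requiredProperties}--b) directly once conditional nullity of $N$ at $(S,j)$ is established, whereas the paper writes the same mechanism inline via the decomposition $\max\{f,g\}=g+(f-g)_+$ together with subadditivity, applying Proposition \ref{requiredProperties}--e) to the null function $(f-g)_+$ rather than to $\mathbf{1}_N$.
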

\begin{proof}
Note that, for every $S\in \mathcal{S}$,
$$
\overline{\sigma}_j f(S)\leq \overline{\sigma}_j (\max\{f,g\})(S)= \overline{\sigma}_j (g+(f-g)_+)(S)\leq \overline{\sigma}_j g(S)+ \overline{I}_j(f-g)_+(S), 
$$	
where $(\cdot)_+$ denotes the positive part. As $(f-g)_+$ is a null function, we conclude by Proposition \ref{requiredProperties} that $\overline{I}_j(f-g)_+$ is a null function. Hence, $\overline{\sigma}_j f\leq \overline{\sigma}_j g$ a.e.
\end{proof}

Here are some equivalent properties for $(L_{(S,j)})$.
\begin{proposition} \label{Properties_L}
For a fixed node $(S,j)$, the following items are equivalent.
\begin{enumerate}
\item $\overline{\sigma}_j 0 (S) = 0$.\label{sigma0=0}
\item $\underline{\sigma}_jf(S)\le \overline{\sigma}_jf(S)$ for any $f\in Q$. \label{sigmaDown_Leq_sigmaUp}
\item Property $(L_{(S,j)})$.  \label{L_j}
\item $\underline{\sigma}_j f (S) = V(S)= \overline{\sigma}_j f(S)$ for $f = \Pi^{V, H}_{j, n_f} \in \mathcal{E}_j$. \label{sigma=I}
\end{enumerate}
\end{proposition}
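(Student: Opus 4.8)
The plan is to treat statement (1), $\overline{\sigma}_j 0(S)=0$, as a hub and prove $(1)\Leftrightarrow(3)$, $(1)\Leftrightarrow(2)$, $(3)\Rightarrow(4)$ and $(4)\Rightarrow(1)$. Note first that Proposition~\ref{requiredProperties}(a) already gives $\overline{\sigma}_j 0(S)\le 0$ unconditionally (the zero function is constant on $\mathcal{S}_{(S,j)}$), so (1) is really the one-sided assertion $\overline{\sigma}_j 0(S)\ge 0$; the genuine content of the proposition is that $(L_{(S,j)})$ is exactly what excludes the degenerate value $\overline{\sigma}_j 0(S)=-\infty$ (cf.~the discussion preceding Definition~\ref{assumptionL-ae}). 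For $(1)\Leftrightarrow(3)$ I would just unfold Definition~\ref{cond_integ_def} with $f=0$: $\overline{\sigma}_j 0(S)=\inf\{\sum_{m\ge 0}V^m:\ 0\le f_0+\sum_{m\ge1}f_m\text{ on }\mathcal{S}_{(S,j)}\}$, where $f_0=\Pi^{V^0,H^0}_{j,n_0}\in\mathcal{E}_{(S,j)}$ and $f_m=\liminf_n\Pi^{V^m,H^m}_{j,n}$ with $\Pi^{V^m,H^m}_{j,n}\in\mathcal{E}_{(S,j)}^+$ for $m\ge1$. By the reformulation of $(L_{(S,j)})$ recorded in the Remark after Definition~\ref{lPropertyWithLimInf}, property $(L_{(S,j)})$ is precisely the statement that every competitor $\sum_{m\ge0}V^m$ in this infimum is $\ge0$; hence $(3)\Rightarrow\overline{\sigma}_j 0(S)\ge0$, and conversely $\overline{\sigma}_j 0(S)=0$ forces each such $\sum_{m\ge0}V^m\ge 0$, which is $(L_{(S,j)})$.

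For $(1)\Rightarrow(2)$, fix $f\in Q$. Since $f+(-f)\ge 0$ pointwise (using the convention $\infty+(-\infty)=\infty$), isotonicity and subadditivity of $\overline{\sigma}_j$ (Proposition~\ref{requiredProperties}(b),(c)) give
\[
0=\overline{\sigma}_j 0(S)\le\overline{\sigma}_j\bigl(f+(-f)\bigr)(S)\le\overline{\sigma}_j f(S)+\overline{\sigma}_j(-f)(S),
\]
so that $\underline{\sigma}_j f(S)=-\overline{\sigma}_j(-f)(S)\le\overline{\sigma}_j f(S)$; one checks, using the extended-real conventions, that this last rearrangement remains valid even when one of the two terms is infinite. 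For $(2)\Rightarrow(1)$, specialise $f=0$ in (2): then $-\overline{\sigma}_j 0(S)=\underline{\sigma}_j 0(S)\le\overline{\sigma}_j 0(S)$ forces $\overline{\sigma}_j 0(S)\ge0$, and combined with Proposition~\ref{requiredProperties}(a) this yields $\overline{\sigma}_j 0(S)=0$.

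The implication $(4)\Rightarrow(1)$ is immediate: apply (4) to $f\equiv 0$ (the constant function, an element of $\mathcal{E}_j$ with $V\equiv 0$). For $(3)\Rightarrow(4)$, let $f=\Pi^{V,H}_{j,n_f}\in\mathcal{E}_j$, so that $f|_{\mathcal{S}_{(S,j)}}=\Pi^{V(S),H}_{j,n_f}\in\mathcal{E}_{(S,j)}$. The bound $\overline{\sigma}_j f(S)\le V(S)$ comes from the trivial covering $f_0=f|_{\mathcal{S}_{(S,j)}}$, $f_m\equiv 0$ for $m\ge1$. For the reverse bound, take an arbitrary covering $f\le f_0+\sum_{m\ge1}f_m$ on $\mathcal{S}_{(S,j)}$ as in Definition~\ref{cond_integ_def}; since $f_0$ is real valued, rearrange to $f-f_0\le\sum_{m\ge1}f_m$, and observe that $f-f_0=\Pi^{V(S)-V^0,\,H-H^0}_{j,\max\{n_f,n_0\}}\in\mathcal{E}_{(S,j)}$ (here $\mathcal{E}_{(S,j)}$ is a vector space and one keeps track of maturities with the convention that portfolio positions vanish past their maturity). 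Property $(L_{(S,j)})$ then gives $V(S)-V^0\le\sum_{m\ge1}V^m$, i.e.\ $V(S)\le\sum_{m\ge0}V^m$; passing to the infimum yields $\overline{\sigma}_j f(S)\ge V(S)$, hence $\overline{\sigma}_j f(S)=V(S)$. Finally $-f=\Pi^{-V,-H}_{j,n_f}\in\mathcal{E}_j$, so the same computation gives $\overline{\sigma}_j(-f)(S)=-V(S)$ and therefore $\underline{\sigma}_j f(S)=-\overline{\sigma}_j(-f)(S)=V(S)$ as well, which is (4).

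I do not expect a real obstacle: each implication reduces to the defining infimum of $\overline{\sigma}_j$ together with the elementary order/subadditivity properties from Proposition~\ref{requiredProperties}. The only points demanding care are the bookkeeping with the extended-real conventions (so that $\underline{\sigma}_j f(S)\le\overline{\sigma}_j f(S)$ stays meaningful when a side equals $\pm\infty$) and the maturity/index accounting when writing $f-f_0$ and $-f$ as elementary functions in the step $(3)\Rightarrow(4)$.
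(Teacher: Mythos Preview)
Your proof is correct and follows essentially the same route as the paper: the equivalence $(1)\Leftrightarrow(2)$ via subadditivity of $\overline{\sigma}_j$ applied to $f+(-f)$, the implication $(3)\Rightarrow(4)$ by subtracting the elementary $f_0$ from $f$ and invoking $(L_{(S,j)})$, and the trivial $(4)\Rightarrow(1)$ all match the paper's argument. The only cosmetic difference is that you establish $(1)\Leftrightarrow(3)$ in one stroke via the reformulation of $(L_{(S,j)})$ as nonnegativity of all competitors in the infimum for $\overline{\sigma}_j 0(S)$, whereas the paper derives $(1)\Rightarrow(3)$ by moving the elementary function across and closes the cycle through $(3)\Rightarrow(4)\Rightarrow(1)$; the underlying computation is the same.
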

\begin{proof}
From (\ref{sigma0=0}) and item c) of Proposition \ref{requiredProperties} it follows (2), since $0\le \overline{\sigma}_jf(S)+\overline{\sigma}_j[-f](S)$. Assumed (2), it follows that $0\le\underline{\sigma}_j0(S)\le \overline{\sigma}_j0(S)\le 0$, first and last inequalities from item a) of Proposition \ref{requiredProperties}, so (\ref{sigma0=0}) holds.

From here onwards we let $f=\Pi^{V, H}_{j, n_f}$. 

$(\ref{L_j})$ follows from $(\ref{sigma0=0})$ as follows. Let  $h_m = \liminf\limits_{n \rightarrow \infty}~ \Pi^{V^m, H^m}_{j, n},~~\Pi^{V^m, H^m}_{j, n} \in \mathcal{E}_{(S, j)}^+~\forall~ n \geq j$ and $m \geq 1$, such that $f \leq \sum\limits_{m \geq 1} h_m$.

\noindent Then $0 \leq -\Pi^{V, H}_{j, n_f} + \sum\limits_{m \geq 1} h_m$, thus (taking $f_0 \equiv -\Pi^{V, H}_{j, n_f}$,  and $f_m=h_m$ for $m\ge 1$) by Definition \ref{cond_integ_def}, $0 = \overline{\sigma}_j(0) \leq -V+\sum\limits_{m \geq 1} V^m$, which leads to $V \leq \sum\limits_{m \geq 1} V^m$ as required.

Assumed $(\ref{L_j})$, let $f_0=\Pi^{V^0, H^0}_{j, n_0}\in \mathcal{E}_{(S, j)}$ and $f_m = \liminf\limits_{n \rightarrow \infty}~ \Pi^{V^m, H^m}_{j, n},~~\Pi^{V^m, H^m}_{j, n} \in \mathcal{E}_{(S, j)}^+\\~\forall~ n \geq j$ and $m \geq 1$, such that $f \leq \sum\limits_{m \geq 0} f_m$. Then $f-f_0\le \sum\limits_{m \geq 1} f_m$ with $f-f_0\in \mathcal{E}_{(S, j)}$, so \[V(S)-V_0\le \sum\limits_{m \geq 0} V^m,\]
and \[V(S)\le \overline{\sigma}_jf(S).\]
Since by Definition \ref{cond_integ_def}, $\overline{\sigma}_jf(S)\le V(S)$, (\ref{sigma=I}) holds, having in mind that $\underline{\sigma}_jf(S)=-\overline{\sigma}_j[-f](S)= V(S)$.

Finally, it is clear that (\ref{sigma0=0}) follows from (\ref{sigma=I}).
\end{proof}

\begin{lemma} \label{integProperty}
	Let $f \in Q$, $(S,j)$ a fixed node and $k \ge j$. If $f$ is constant on $\mathcal{S}_{(S,j)}$ then $\overline{\sigma}_k f(S) \leq f(S)\le \underline{\sigma}_k f(S)$. Moreover once $(L_{(S,k)})$ holds then
	\begin{enumerate}
		\item \label{f_constant} If $f$ is constant on $\mathcal{S}_{(S,j)}$ then  $\underline{\sigma}_k f(S) = f(S) = \overline{\sigma}_k f(S)$.
		
		\item \label{general_f} For a general $f\in Q$, $\overline{\sigma}_j f$ is constant on $\mathcal{S}_{(S,j)}$, hence:
		\begin{equation} \nonumber
			\overline{\sigma}_k [\overline{\sigma}_j f](S) = \overline{\sigma}_j f(S) = \underline{\sigma}_k [\overline{\sigma}_j f](S)\quad \mbox{and} \quad \overline{\sigma}_k [\underline{\sigma}_j f](S)= \underline{\sigma}_j f(S) = \underline{\sigma}_k [\underline{\sigma}_j f](S).
		\end{equation}
	\end{enumerate}
\end{lemma}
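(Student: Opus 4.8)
The plan is to reduce the whole statement to the first displayed inequality together with Propositions \ref{requiredProperties} and \ref{Properties_L}; no genuinely new argument is needed.

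First I would prove the basic inequality. Since $k\ge j$ we have $\mathcal{S}_{(S,k)}\subset\mathcal{S}_{(S,j)}$, so if $f$ is constant on $\mathcal{S}_{(S,j)}$ it is a fortiori constant on $\mathcal{S}_{(S,k)}$. Proposition \ref{requiredProperties}--a) applied at the node $(S,k)$ then gives $\overline{\sigma}_k f(S)\le f(S)$; applying the same inequality to $-f$ and using $\underline{\sigma}_k f(S)=-\overline{\sigma}_k(-f)(S)$ yields $f(S)\le\underline{\sigma}_k f(S)$. This is the first assertion. The only delicate point is the bookkeeping when $f(S)=\pm\infty$, but the conventions $0\,\infty=0$ and $\infty+(-\infty)=\infty$ are precisely those under which Proposition \ref{requiredProperties}--a) was stated, so nothing extra is required.

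Next, for item (1), assume $(L_{(S,k)})$ holds. By Proposition \ref{Properties_L}, property $(L_{(S,k)})$ is equivalent to $\underline{\sigma}_k g(S)\le\overline{\sigma}_k g(S)$ for every $g\in Q$; taking $g=f$ and inserting the chain $\overline{\sigma}_k f(S)\le f(S)\le\underline{\sigma}_k f(S)$ from the previous paragraph forces all three quantities to coincide, which is exactly $\underline{\sigma}_k f(S)=f(S)=\overline{\sigma}_k f(S)$.

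Finally, for item (2), I would use the already-noted identity $\overline{\sigma}_j f(S)=\overline{\sigma}_j f(S_0,\ldots,S_j)$, i.e. $\overline{\sigma}_j f$ is constant on $\mathcal{S}_{(S,j)}$; likewise $\underline{\sigma}_j f(S)=-\overline{\sigma}_j(-f)(S)$ depends only on $S_0,\ldots,S_j$ and hence is constant on $\mathcal{S}_{(S,j)}$. Item (1) then applies verbatim with $f$ replaced by $\overline{\sigma}_j f$, respectively by $\underline{\sigma}_j f$, giving $\overline{\sigma}_k[\overline{\sigma}_j f](S)=\overline{\sigma}_j f(S)=\underline{\sigma}_k[\overline{\sigma}_j f](S)$ and $\overline{\sigma}_k[\underline{\sigma}_j f](S)=\underline{\sigma}_j f(S)=\underline{\sigma}_k[\underline{\sigma}_j f](S)$. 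There is no real obstacle here: the proof is a short chaining of already-established normalization and monotonicity facts, and the only thing to watch is to pick, among the equivalent formulations of $(L_{(S,k)})$ in Proposition \ref{Properties_L}, the one that directly delivers $\underline{\sigma}_k\le\overline{\sigma}_k$ at the point $S$, and to keep the $\pm\infty$ conventions consistent throughout.
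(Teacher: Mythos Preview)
Your proof is correct and follows essentially the same route as the paper: both argue that constancy on $\mathcal{S}_{(S,j)}$ implies constancy on the smaller set $\mathcal{S}_{(S,k)}$, invoke Proposition~\ref{requiredProperties}--a) (or equivalently the definition of $\overline{\sigma}_k$ on constants) for the basic inequality, and then appeal to Proposition~\ref{Properties_L} under $(L_{(S,k)})$. The only cosmetic difference is that the paper cites item~(4) of Proposition~\ref{Properties_L} (the identity $\underline{\sigma}_k f=V=\overline{\sigma}_k f$ for $f\in\mathcal{E}_{(S,k)}$) directly, whereas you use the equivalent item~(2) ($\underline{\sigma}_k\le\overline{\sigma}_k$) and sandwich; your version has the minor advantage of handling the case $f(S)=\pm\infty$ without needing $f\in\mathcal{E}_{(S,k)}$.
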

\begin{proof}
	If $f$ is constant on $\mathcal{S}_{(S,j)}$, it is also constant on $\mathcal{S}_{(S,k)}\subset \mathcal{S}_{(S,j)}$, then $f\in \mathcal{E}_{(S,k)}$, so by Definition \ref{cond_integ_def},  $\overline{\sigma}_kf(S) \leq f(S)$.
	
	If, furthermore, also $(L_{(S,k)})$ holds,
	we have $\underline{\sigma}_kf(S) = I_k f(S)= f(S)=\overline{\sigma}_kf(S)$ by Proposition \ref{Properties_L} item (4).
\end{proof}

\begin{corollary}\label{linearityOfSigma}
Let $f, g \in Q$ and consider
a fixed $S\in\Se$. If for $j\ge 0$ property $(L_{(S,j)})$ holds and $\overline{\sigma}_j f(S) - \underline{\sigma}_j f(S) = 0 = \overline{\sigma}_j g(S) - \underline{\sigma}_j g(S)$, then all the involved quantities are finite and
\[
(a) \quad\overline{\sigma}_j(f+g)(S)= \overline{\sigma}_jf(S)+\overline{\sigma}_jg(S)= \underline{\sigma}_jf(S)+\underline{\sigma}_jg(S)=\underline{\sigma}_j(f+g)(S).
\]
\[
\hspace{-.9in} (b) \quad\overline{\sigma}_j(cf)(S)= c\overline{\sigma}_jf(S)=c\underline{\sigma}_jf(S)=\underline{\sigma}_j(cf)(S)\quad\forall c\in \mathbb{R}.
\]
\end{corollary}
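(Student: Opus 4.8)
The plan is to extract finiteness of all the quantities from the two vanishing-gap hypotheses together with the arithmetic conventions, and then to obtain the additivity and homogeneity identities by playing the subadditivity of $\overline{\sigma}_j$ against the inequality $\underline{\sigma}_j\le\overline{\sigma}_j$ supplied by $(L_{(S,j)})$. First I would check that $\overline{\sigma}_j f(S)=\underline{\sigma}_j f(S)\in\mathbb{R}$ and $\overline{\sigma}_j g(S)=\underline{\sigma}_j g(S)\in\mathbb{R}$: writing $\overline{\sigma}_j f(S)-\underline{\sigma}_j f(S)=\overline{\sigma}_j f(S)+\bigl(-\underline{\sigma}_j f(S)\bigr)$, one notes that under the conventions in force (in particular $\infty+(-\infty)=\infty$), if either summand were $\pm\infty$ this expression would equal $\pm\infty\neq 0$; so the hypothesis forces $\overline{\sigma}_j f(S),\underline{\sigma}_j f(S)\in\mathbb{R}$, and likewise for $g$. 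Since $(L_{(S,j)})$ holds, Proposition~\ref{Properties_L} gives $\underline{\sigma}_j h(S)\le\overline{\sigma}_j h(S)$ for every $h\in Q$; combined with the hypotheses this yields $\overline{\sigma}_j f(S)=\underline{\sigma}_j f(S)$ and $\overline{\sigma}_j g(S)=\underline{\sigma}_j g(S)$.

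For part~(a), subadditivity (Proposition~\ref{requiredProperties}(c)) gives $\overline{\sigma}_j(f+g)(S)\le\overline{\sigma}_j f(S)+\overline{\sigma}_j g(S)$. Applying the same inequality to $-f$ and $-g$ and negating — legitimate now that every term is finite — yields the superadditivity $\underline{\sigma}_j(f+g)(S)\ge\underline{\sigma}_j f(S)+\underline{\sigma}_j g(S)$. The equalities $\overline{\sigma}_j f(S)=\underline{\sigma}_j f(S)$ and $\overline{\sigma}_j g(S)=\underline{\sigma}_j g(S)$ make the right-hand sides of these two bounds coincide, so
\[
\underline{\sigma}_j(f+g)(S)\ \ge\ \underline{\sigma}_j f(S)+\underline{\sigma}_j g(S)\ =\ \overline{\sigma}_j f(S)+\overline{\sigma}_j g(S)\ \ge\ \overline{\sigma}_j(f+g)(S).
\]
Since $(L_{(S,j)})$ also gives $\underline{\sigma}_j(f+g)(S)\le\overline{\sigma}_j(f+g)(S)$ by Proposition~\ref{Properties_L}, every inequality above is an equality (and each term lies in $\mathbb{R}$), which is precisely (a).

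For part~(b) I would split into three cases. If $c>0$, the remark closing the proof of Proposition~\ref{requiredProperties} (used with the constant $g\equiv c$) gives $\overline{\sigma}_j(cf)(S)=c\,\overline{\sigma}_j f(S)$; applying it to $-f$ and recalling that $\underline{\sigma}_j h(S)=-\overline{\sigma}_j(-h)(S)$ gives $\underline{\sigma}_j(cf)(S)=c\,\underline{\sigma}_j f(S)$. If $c=0$, then $\overline{\sigma}_j(cf)(S)=\overline{\sigma}_j 0(S)=0=\underline{\sigma}_j 0(S)$ by $(L_{(S,j)})$ and Proposition~\ref{Properties_L}. If $c<0$, write $cf=|c|(-f)$ and combine the case $c>0$ with $\underline{\sigma}_j f(S)=-\overline{\sigma}_j(-f)(S)$. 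In each case, invoking $\overline{\sigma}_j f(S)=\underline{\sigma}_j f(S)$, the four quantities in (b) agree. The computations are all short; the only delicate point is the bookkeeping with $\pm\infty$ — one must be sure that finiteness is genuinely forced by the vanishing-gap hypotheses before carrying out any negation that converts a statement about $\overline{\sigma}_j$ into one about $\underline{\sigma}_j$.
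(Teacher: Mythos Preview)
Your proof is correct and follows essentially the same approach as the paper's: finiteness from the arithmetic conventions, then for (a) the sandwich $\underline{\sigma}_j f+\underline{\sigma}_j g\le\underline{\sigma}_j(f+g)\le\overline{\sigma}_j(f+g)\le\overline{\sigma}_j f+\overline{\sigma}_j g$ with equal endpoints, and for (b) the positive-homogeneity from Proposition~\ref{requiredProperties}(d) together with the duality $\underline{\sigma}_j=-\overline{\sigma}_j(-\,\cdot\,)$ to handle negative $c$. You are a bit more explicit than the paper about the superadditivity of $\underline{\sigma}_j$ and the finiteness bookkeeping, but the argument is the same.
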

\begin{proof}
The finiteness claims follow from our conventions in the first paragraph of Section \ref{a.e. section}. We then see that the  hypotheses imply that $\overline{\sigma}_j f(S)= \underline{\sigma}_j f(S)$ and $\overline{\sigma}_j g(S)= \underline{\sigma}_j g(S)$.

(a) holds from
\[
\overline{\sigma}_jf(S)+\overline{\sigma}_jg(S)=\underline{\sigma}_jf(S)+\underline{\sigma}_jg(S)\le \underline{\sigma}_j[f+g](S)\le \overline{\sigma}_j[f+g](S)\le\overline{\sigma}_jf(S)+\overline{\sigma}_jg(S).
\]
For (b), if $c=0$ or $c=-1$ the result is clear. For $c>0$ it follows from item $d)$ of Proposition \ref{requiredProperties}, from where, if $c<0$
\[\overline{\sigma}_j(cf)(S)= \overline{\sigma}_j(-c(-f))(S)= -c\overline{\sigma}_j(-f)(S)= c\overline{\sigma}_jf(S).\]
\end{proof}

\section{General Hypothesis to Establish $(L)$-a.e. and  proof of Theorem \ref{thm:L}} \label{app:L}

In  this appendix, we state and prove a generalization of Theorem \ref{thm:L}. It relies on the following concept of `good nodes'.

\begin{definition}[{\bf Good Nodes}] \label{admissibleNodes}
	A node $(S,j)$ is called \emph{good}, if $\mathcal{S}_{(S,j)}\neq N(S,j)$, (as introduced in \eqref{someNullSets}). Otherwise, $(S,j)$ is said to be \emph{bad}.
\end{definition}

The following theorem characterizes, under suitable assumptions, the good nodes as precisely those nodes at which the continuity from below property holds. Note that (H.5) relaxes the assumption of trajectorial completeness.

\begin{theorem}\label{proofOfL}
Suppose the trajectory set satisfies the following assumptions:
\begin{itemize}
	\item[(H.4)] If $(S,j)$ is a good up-down node, then for every $\epsilon>0$ there are $S^{\epsilon,1}, S^{\epsilon,2}\in \Se_{(S,j)}$ such that $(S^{\epsilon,1},j+1)$ and $(S^{\epsilon,2},j+1)$ are good nodes satisfying
	$$
	S^{\epsilon,1}_{j+1}-S_{j}\geq -\epsilon,\quad S^{\epsilon,2}_{j+1}-S_{j}\leq \epsilon
	$$
	\item[(H.5)] If  $(S^n)_{n\ge 0}$  is a sequence in $\mathcal{S}$ satisfying
	\begin{equation*}   
		~~~S^n_i = S^{n+1}_i \;\; 0 \leq i \leq n, ~~\forall ~n,~~~~~~~~
	\end{equation*}
 and if there is an $n_0\geq 0$ such that $(S^n,n)$ is a good node for every  $n\geq n_0$, then $ (S^n_n)_{n\in \mathbb{N}_0}\in \mathcal{S}$ (i.e. $\overline{S} \equiv \lim_{n \rightarrow \infty} S^n~\in \mathcal{S}$).
\end{itemize}
Then, $(L_{(S,j)})$ holds, if and only if $(S,j)$ is a good node.
\end{theorem}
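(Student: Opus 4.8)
The plan is to prove the two implications separately, using that $N(S,j)\subseteq\mathcal{S}_{(S,j)}$ always holds, so ``$(S,j)$ good'' is exactly the negation of ``$\mathcal{S}_{(S,j)}=N(S,j)$''. For ``$(L_{(S,j)})$ holds $\Rightarrow$ $(S,j)$ good'' I argue by contraposition. If $(S,j)$ is bad, then $\mathcal{S}_{(S,j)}=N(S,j)=N_j\cap\mathcal{S}_{(S,j)}$, so the restriction of $\mathbf{1}_{\mathcal{S}}$ to $\mathcal{S}_{(S,j)}$ agrees with that of $\mathbf{1}_{N_j}$, whence $\overline{I}_j(\mathbf{1}_{\mathcal{S}})(S)=\|\mathbf{1}_{N_j}\|_j(S)$, which equals $0$ by Lemma \ref{nullityOfN_j}. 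On the other hand $1=\Pi^{1,0}_{j,j}\in\mathcal{E}_{(S,j)}$, so property $(L_{(S,j)})$ would force every $\liminf$-covering $1\leq\sum_{m\geq1}f_m$ of the constant $1$ on $\mathcal{S}_{(S,j)}$ to satisfy $1\leq\sum_{m\geq1}V^m$, i.e.\ $\overline{I}_j(\mathbf{1}_{\mathcal{S}})(S)\geq1$ -- a contradiction. This direction uses neither (H.4) nor (H.5).

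For the converse, ``$(S,j)$ good $\Rightarrow$ $(L_{(S,j)})$'', fix a covering $0\leq f_0+\sum_{m\geq1}f_m$ on $\mathcal{S}_{(S,j)}$ with $f_0=\Pi^{V^0,H^0}_{j,n_0}\in\mathcal{E}_{(S,j)}$ and $f_m=\liminf_{n}\Pi^{V^m,H^m}_{j,n}$, $\Pi^{V^m,H^m}_{j,n}\in\mathcal{E}^+_{(S,j)}$; one must show $\sum_{m\geq0}V^m\geq0$, and may assume $\sum_{m\geq1}V^m<\infty$ (else the claim is trivial). Fix $\eta>0$. I would construct recursively good nodes $(\overline{S}^n,n)$, $n\geq j$, with $(\overline{S}^j,j)=(S,j)$ and $\overline{S}^{n+1}\in\mathcal{S}_{(\overline{S}^n,n)}$, as follows. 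A type II arbitrage node cannot occur, since $\mathcal{S}_{(\overline{S}^n,n)}\subseteq N^{\circ}_n$ would make it bad (cf.\ Lemma \ref{priceMinusInfinity}). If $(\overline{S}^n,n)$ is flat or a type I arbitrage node, then from goodness -- not every time-$(n+1)$ subnode of $\mathcal{S}_{(\overline{S}^n,n)}$ lies in $N(\overline{S}^n,n)$, and a subnode $\mathcal{S}_{(\tilde S,n+1)}$ lies in $N(\overline{S}^n,n)$ iff $[(\overline{S}^n,n)$ is an arbitrage node and $\Delta_n\tilde S\neq0]$ or $[(\tilde S,n+1)$ is bad$]$ -- one extracts a good successor with $\Delta_n\overline{S}^{n+1}=0$. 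If $(\overline{S}^n,n)$ is an up-down node, then its history consists of up-down nodes and nodes with vanishing increment, so the Aggregation Lemma \ref{convergenceOfPortfolioCoordinates} applies and $G_n:=\sum_{m\geq0}H^m_n(\overline{S}^n)$ converges in $\mathbb{R}$; choosing $\epsilon_n:=\eta\,2^{-n-1}/(1+|G_n|)$ and invoking (H.4), one picks a good successor for which $G_n\,\Delta_n\overline{S}^{n+1}\leq\eta\,2^{-n-1}$ (take the witness with increment $\geq-\epsilon_n$ if $G_n<0$ and the one with increment $\leq\epsilon_n$ if $G_n\geq0$, noting that the product is $\leq0$ in the remaining sign case). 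Since each $(\overline{S}^n,n)$ is good and $\overline{S}^n_i=\overline{S}^{n+1}_i$ for $i\leq n$, hypothesis (H.5) yields $\overline{S}:=\lim_n\overline{S}^n\in\mathcal{S}$, and clearly $\overline{S}\in\mathcal{S}_{(S,j)}$.

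It then remains to evaluate the covering along $\overline{S}$. With $p^m_N:=\sum_{i=j}^{N-1}H^m_i(\overline{S})\Delta_i\overline{S}$ one has $\Pi^{V^m,H^m}_{j,N}(\overline{S})=V^m+p^m_N\geq0$ for $m\geq1$, and $D_N:=\sum_{i=j}^{N-1}G_i\Delta_i\overline{S}=\sum_{m\geq0}p^m_N$ satisfies $D_N\leq\eta$ for all $N$ (increments vanish at flat and type I nodes). Choosing a subsequence $N_k$ with $D_{N_k}\to\liminf_N D_N\leq\eta$, a Fatou estimate on the nonnegative quantities $V^m+p^m_{N_k}$, $m\geq1$ -- legitimate since $\sum_{m\geq1}V^m<\infty$ and the interchange $\sum_m\sum_i=\sum_i\sum_m$ is justified by the Aggregation Lemma -- gives $\sum_{m\geq1}\liminf_k p^m_{N_k}\leq\liminf_N D_N-\sum_{i=j}^{n_0-1}H^0_i(\overline{S})\Delta_i\overline{S}$. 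Since $\liminf_N\Pi^{V^m,H^m}_{j,N}(\overline{S})\leq V^m+\liminf_k p^m_{N_k}$ and $f_0(\overline{S})=V^0+\sum_{i=j}^{n_0-1}H^0_i(\overline{S})\Delta_i\overline{S}$, the covering $0\leq f_0(\overline{S})+\sum_{m\geq1}\liminf_N\Pi^{V^m,H^m}_{j,N}(\overline{S})$ collapses (the $H^0$-terms cancel) to $0\leq\sum_{m\geq0}V^m+\liminf_N D_N\leq\sum_{m\geq0}V^m+\eta$. Letting $\eta\downarrow0$ finishes the proof.

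The two delicate points I expect are, first, the recursive step at flat and especially type I arbitrage nodes: one must reliably produce a good successor of the prescribed type (in particular, with zero increment) from the mere goodness of the current node, which requires careful bookkeeping with $N^{\circ}_n$, $N_{n+1}$ and the partition of $\mathcal{S}_{(\overline{S}^n,n)}$ into subnodes, and one must verify that the running history never violates the hypotheses of the Aggregation Lemma. Second, converting control of the \emph{aggregate} drift $G_i\Delta_i\overline{S}$ into control of $\sum_{m}\liminf_N\Pi^{V^m,H^m}_{j,N}(\overline{S})$ through the Fatou step, where the non-sign-constrained term $f_0$ must be carried separately so that its contribution cancels at the end. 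Everything else should be routine.
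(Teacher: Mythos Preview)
Your proposal is correct and follows essentially the same approach as the paper's proof. Both directions match: the contrapositive for ``bad $\Rightarrow \neg(L_{(S,j)})$'' via $\overline{I}_j\mathbf{1}_{\mathcal{S}}(S)=0$ is exactly the paper's argument, and for ``good $\Rightarrow (L_{(S,j)})$'' both proofs recursively build a trajectory through good nodes (zero increment at flat/type~I nodes, small controlled increment via (H.4) at up-down nodes), invoke (H.5) for the limit, and use the Aggregation Lemma for convergence of $\sum_m H^m_i$. The only organizational difference is that the paper applies Fatou \emph{before} constructing the trajectory (aggregating to a single $\tilde H_i$ and then seeking $\tilde S$ with $\liminf_n\sum_i\tilde H_i\Delta_i\tilde S\leq\delta$), whereas you carry the full sum over $m$ along the constructed $\overline{S}$ and apply Fatou along a subsequence at the end; the two orderings are equivalent, and your handling of the $f_0$-term (folding $H^0$ into $G_n$ and cancelling $p^0$ at the final step) is a harmless variant of the paper's $\tilde H_i=H_i-H^0_i$ device.
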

\begin{proof}
Fix a good node $(S,j)$. Let $~~~~f = \Pi^{V^0, H^0}_{j, n_f}\in \mathcal{E}_{(S, j)}$
	and $f_m = \liminf_{n \rightarrow \infty} \Pi^{V^m, H^m}_{j, n}$ with $\Pi^{V^m, H^m}_{j, n} \in \mathcal{E}_{(S, j)}^+~\mbox{for all}~ n \geq j~\mbox{and}~ m \geq 1$ such that
	$$
	f \leq \sum_{m \geq 1} f_m\;\;\mbox{on}\; \Se_{(S,j)}.
	$$
	We need to show that 
	$$
	V^0 \leq \sum_{m \geq 1}  V^m=:V,
	$$
	(and, hence, can and will assume that $V<\infty$). Recall that
	$$
	N(S,j)=\{\tilde S\in \Se_{(S,j)}|\; (\tilde S,k) \mbox{ is an arbitrage node and }\tilde S_{k+1}\neq \tilde S_k \mbox{ for some } k\geq j\}.
	$$
	If $\tilde S\in \Se_{(S,j)} \setminus N(S,j)$, then, for every $k\geq j$, $(\tilde S,k)$ is an up-down node or $\tilde S_{k+1}=\tilde S_k$. Hence, by the Aggregation Lemma \ref{convergenceOfPortfolioCoordinates}, for every $\tilde S\in \Se_{(S,j)} \setminus N(S,j)$ and $n\geq j$
	$$
	\sum_{m\geq 1}  \Pi^{V^m, H^m}_{j, n}(\tilde S)=V+\sum_{i=j}^{n-1} H_i(\tilde S)\Delta_i\tilde S,
	$$
	for the non-anticipative sequence
	$$
	H_i(\hat S)=\begin{cases} \sum_{m=1}^\infty H^m_i(\hat S), & \mbox{if convergent in }\mathbb{R}, \\ 0,& \mbox{otherwise,} \end{cases} \quad i\geq j,\; \hat S\in \Se_{(S,j)}.
	$$
	Then, by Fatou's lemma, for every $\hat S\in \Se_{(S,j)}$
	$$
	f\leq V+\liminf_{n\rightarrow \infty} \sum_{i=j}^{n-1} H_i(\hat S)\Delta_i\hat S+\infty {\bf 1}_{ N(S,j)}(\hat S),
	$$
	which we may rearrange into
	$$
	V^0\leq V+ \liminf_{n\rightarrow \infty} \sum_{i=j}^{n-1} \tilde H_i(\hat  S)\Delta_i\hat S+\infty {\bf 1}_{ N(S,j)}(\hat S)
	$$
	where $\tilde H_i=H_i-H^0_i$ for $i<n_f$ and $\tilde H_i=H_i$ otherwise. 
	
	Thus, it is enough to show the following: For every $\delta>0$ there is an $\tilde S\in \Se_{(S,j)} \setminus N(S,j)$ such that
	\begin{equation}\label{eq:0007}
		\liminf_{n\rightarrow \infty} \sum_{i=j}^{n-1} \tilde H_i(\tilde  S)\Delta_i\tilde S\leq \delta.
	\end{equation}
To this end,  we construct a sequence $(S^n)$ in $\Se$ as follows: $S^n=S$ for $n\leq j$ and, inductively for $n> j$ in the following way, which guarantees that $(S^n,n)$ is a good node for every $n\geq j$. Assume $S^n$ has already been constructed for some $n\geq j$ and $(S^n,n)$ is a good node.

If $(S^n,n)$ is a flat node or an arbitrage node of type I, then choose $S^{n+1}\in \mathcal{S}_{(S^n,n)}$ such that $S^{n+1}_{n+1}=S^n_n$. We argue that $(S^{n+1},n+1)$ is good in the type I case (the flat case being similar and easier). Suppose to the contrary that $(S^{n+1},n+1)$ is bad. Then, for every $\hat S \in \mathcal{S}_{(S^{n+1},n+1)}\subseteq \mathcal{S}_{(S^{n},n)}$ there is an $i\geq n+1$ such that $(\hat S,i)$ is an arbitrage node and $\hat S_{i+1}\neq \hat S_i$ -- hence, $\hat S \in N(S^n,n)$. Moreover, any $\hat S \in \mathcal{S}_{(S^{n},n)} \setminus \mathcal{S}_{(S^{n+1},n+1)}$ belongs to $N(S^n,n)$ because $(S^n,n)$ is a type I arbitrage node. Thus, $\mathcal{S}_{(S^{n},n)}=N(S,n)$ -- a contradiction.

 If  $(S^n,n)$ is an up-down node, then, by (H.4), one can choose a sufficiently small $\epsilon>0$ and $S^{n+1}\in \mathcal{S}_{(S^n,n)} $   such that 
	\begin{equation}\label{eq:0008}
		\tilde H_n(S^n)(S_{n+1}^{n+1}-S^n_n)\leq 	|\tilde H_n(S^n)|\epsilon \leq  \delta 2^{-(n+1)}
	\end{equation}
and $(S^{n+1},n+1)$ is a good node.

Note that $(S^n,n)$ cannot be an arbitrage node of type II, because it is good by the inductive hypothesis. Hence, the construction of $S^{n+1}$ is finished.

	By (H.5),  $\tilde S=(S^n_n)_{n\in \mathbb{N}_0}\in \Se$. Then, by construction, $\tilde S\in\Se_{(S,j)}$ and $(\tilde S,n)=(S^n,n)$ is an up-down node (and then \eqref{eq:0008} holds) or $\tilde S_{n+1}=S^{n+1}_{n+1}=S^{n}_n=\tilde S_n$, whenever $n\geq j$. Hence, by construction, 
	$\tilde S\notin N{(S,j)}$ and 
	$$
	\liminf_{n\rightarrow \infty} \sum_{i=j}^{n-1} \tilde H_i(\tilde  S)\Delta_i\tilde S\leq \delta \sum_{i=j}^\infty 2^{-(i+1)} \leq \delta,
	$$
	which establishes \eqref{eq:0007}. Consequently, $(L_{(S,j)})$ holds.
	
	Conversely, assume that $(S,j)$ is a bad node. Then, by Lemma \ref{nullityOfN_j}, $\overline I_j1(S)=0$, which in turn implies $\overline \sigma_j0(S)\leq -1$. In view of Proposition \ref{Properties_L}, $(L_{(S,j)})$ does not hold.
\end{proof}

As an immediate consequence of Theorem \ref{proofOfL} and Lemma \ref{nullityOfN_j}, we obtain the following result:

\begin{corollary}\label{cor:L-a.e.}
	Under the assumptions of Theorem \ref{proofOfL}, additionally suppose that $(S,0)$ is good. Then $(L)$-a.e. holds.
\end{corollary}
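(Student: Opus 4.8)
The plan is to deduce the corollary directly from Theorem~\ref{proofOfL} and Lemma~\ref{nullityOfN_j}, verifying the two clauses of Assumption $(L)$-a.e.\ (Definition~\ref{assumptionL-ae}) in turn. For clause (i) I would apply Theorem~\ref{proofOfL} at the initial node: the hypotheses (H.4) and (H.5) are in force and $(S,0)$ is good by assumption, so Theorem~\ref{proofOfL} gives that $(L_{(S,0)})$ --- that is, $(L)$ --- holds.

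For clause (ii), the goal is to show that $\mathcal{N}^{(L)}$ from \eqref{eq:N^L} is contained in the set $\mathcal{N}=N_0$ of \eqref{someNullSets}. First I would rewrite $\mathcal{N}^{(L)}$ using Theorem~\ref{proofOfL}: since $(L_{(S,j)})$ holds exactly at the good nodes, $(L_{(S,j)})$ fails exactly when $(S,j)$ is a bad node in the sense of Definition~\ref{admissibleNodes}, i.e.\ $\mathcal{S}_{(S,j)}=N(S,j)$; hence
\[
\mathcal{N}^{(L)}=\{S\in\Se:\ \exists\, j\ge 0\ \text{with}\ (S,j)\ \text{bad}\}.
\]
Now if $(S,j)$ is bad, then $S\in\mathcal{S}_{(S,j)}=N(S,j)=N_j\cap\mathcal{S}_{(S,j)}\subseteq N_j\subseteq N_0=\mathcal{N}$, where $N_j\subseteq N_0$ is read off from \eqref{someNullSets}. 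Thus $\mathcal{N}^{(L)}\subseteq\mathcal{N}$.

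To finish, I would invoke Lemma~\ref{nullityOfN_j} (with $j=k=0$) to get that $\mathcal{N}=N_0$ is a null set, and then conclude from $\mathbf{1}_{\mathcal{N}^{(L)}}\le\mathbf{1}_{\mathcal{N}}$ and the isotonicity of $\overline I$ (Proposition~\ref{propertiesOfIBarra}) that $\|\mathbf{1}_{\mathcal{N}^{(L)}}\|\le\|\mathbf{1}_{\mathcal{N}}\|=0$, so $\mathcal{N}^{(L)}$ is null; together with clause (i) this is Assumption $(L)$-a.e. I do not expect any real obstacle, since the entire technical content sits in Theorem~\ref{proofOfL}; the one place that warrants a moment's care is the simple but crucial observation that a single bad node $(S,j)$ already pins the whole trajectory $S$ inside the arbitrage null set $\mathcal{N}$ via the nesting $N(S,j)\subseteq N_j\subseteq N_0$.
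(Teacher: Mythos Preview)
Your proposal is correct and matches the paper's approach exactly: the paper states the corollary as an immediate consequence of Theorem~\ref{proofOfL} and Lemma~\ref{nullityOfN_j} without spelling out the details, and you have simply supplied those details—verifying $(L)$ at the initial good node via Theorem~\ref{proofOfL}, then using the equivalence of Theorem~\ref{proofOfL} to identify $\mathcal{N}^{(L)}$ with the set of trajectories passing through a bad node, and finally showing $\mathcal{N}^{(L)}\subseteq N_0$ and invoking Lemma~\ref{nullityOfN_j}.
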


We complete this appendix with the proof of Theorem \ref{thm:L}.

\begin{proof}[{\bf Proof of Theorem \ref{thm:L}}]
We first show that, under the assumptions of Theorem \ref{thm:L}, $(S,j)$ is bad, if and only if $(S,j)$ is an arbitrage node of type II. Since arbitrage nodes of type II are always bad, we only need to show that any other nodes are good. To this end, we fix a node $(S,j)$ which is not arbitrage of type II and construct a trajectory $\tilde S \in \mathcal{S}_{(S,j)}\setminus N(S,j)$. We set  $S^n=S$ for $n\leq j$ and inductively construct $S^n$ for $j\geq n$ in the following way, which guarantees that $(S^n,n)$ is not a type II arbitrage node: If $(S^n,n)$ is flat or an arbitrage node of type I, we choose $S^{n+1}\in \mathcal{S}_{(S^n,n)}$ such that $S^{n+1}_{n+1}=S^n_n$ and note that $(S^{n+1},n+1)$ is not an arbitrage node of type II by (H.2). If $(S^n,n)$ is an up-down node, then by (H.2) again, we find some $S^{n+1}\in  \mathcal{S}_{(S^n,n)}$ such that $(S^{n+1},n+1)$ is not an arbitrage node of type II. Since the trajectory set is complete, $\tilde S= (S^n_n)_{n\in \mathbb{N}_0}\in \mathcal{S}$. By construction, $\tilde S \in \mathcal{S}_{(S,j)}\setminus N(S,j)$.

We next argue that (H.2) implies (H.4). By the characterization of good nodes, (H.4) can be reformulated as
\begin{itemize}
	\item[(H.4')] If $(S,j)$ is an up-down node, then for every $\epsilon>0$ there are $S^{\epsilon,1}, S^{\epsilon,2}\in \Se_{(S,j)}$ such that 
	$$
	S^{\epsilon,1}_{j+1}-S_{j}\geq -\epsilon,\quad S^{\epsilon,2}_{j+1}-S_{j}\leq \epsilon
	$$
	and $(S^{\epsilon,1},j+1)$ and $(S^{\epsilon,2},j+1)$ are not arbitrage nodes of type II.
\end{itemize}
  To verify (H.4'), consider any up-down node $(S,j)$. If $(\hat S,j+1)$ is not a type II arbitrage node for every $\hat S\in \mathcal{S}_{(S,j)}$, then we find $S^1, S^2 \in \mathcal{S}_{(S,j)}$ such that $S^1_{j+1}-S_j>0$ and $S^2_{j+1}-S_j<0$. We may choose $S^{\epsilon,1}=S^{1}$ and  $S^{\epsilon,2}=S^{2}$ for every $\epsilon>0$. Otherwise, (H.2) directly applies.
  
  Finally, note that trajectorial completeness implies (H.5). Hence, by Theorem \ref{proofOfL}, $(L_{(S,j)})$ holds, if and only if $(S,j)$ is not an arbitrage node of type II. By (H.2), the initial node $(S,0)$ is not an arbitrage node of type II, and, thus, $(L)$ holds. Noting that the trajectories passing through type II arbitrage nodes form a null set (by Lemma \ref{nullityOfN_j}), we conclude $(L)$-a.e.  
	\end{proof}

\end{document}